\documentclass[10pt,a4paper]{amsart}
%
%
\usepackage{amsmath, amssymb, graphicx, subfigure, epsfig}
\usepackage{xcolor}

\numberwithin{equation}{section}

\newcommand{\CA}{\mathcal{A}}

\newcommand{\CB}{\mathcal{B}}
\newcommand{\CD}{\mathcal{D}}
\newcommand{\CE}{\mathcal{E}}
\newcommand{\CF}{\mathcal{F}}
\newcommand{\CH}{\mathcal{H}}

\newcommand{\CBa}{\mathcal{B}_{1d}}
\newcommand{\e}{\epsilon}

\newcommand{\br}{\mathbb{R}}
\newcommand{\ik}{\varphi}
\newcommand{\pa}{\partial}
\newcommand{\bt}{\beta}
\newcommand{\al}{\alpha}
\newcommand{\la}{\lambda}

\newcommand{\coi}{C_0^{\infty}}
\newcommand{\ioi}{\int_0^{\infty}}

\newcommand{\de}{\delta}

\newcommand{\be}{\begin{equation}}
\newcommand{\ee}{\end{equation}}

\newcommand{\ts}{\text{supp}}
\newcommand{\dt}{\text{det}}

\newcommand{\xe}{x_\e}
\newcommand{\xc}{\check x}
\newcommand{\lte}{f_{\Lambda\e}}
\newcommand{\fs}{\tilde\upsilon}
\newcommand{\dir}{\Theta}
\newcommand{\s}{\mathcal S}

\newtheorem{theorem}{Theorem}[section]
\newtheorem{lemma}[theorem]{Lemma}

\theoremstyle{remark}
\newtheorem{remark}[theorem]{Remark}

\usepackage{amsthm}

\theoremstyle{definition}
\newtheorem{definition}{Definition}[section]

\begin{document}

\title[Resolution of reconstruction from discrete data]{Analysis of resolution of tomographic-type reconstruction from discrete data for a class of distributions}
\author[A Katsevich]{Alexander Katsevich$^1$}
\thanks{$^1$Department of Mathematics, University of Central Florida, Orlando, FL 32816.\\ 
This work was supported in part by NSF grants DMS-1615124 and DMS-1906361.}

\begin{abstract} Let $f(x)$, $x\in\mathbb R^2$, be a piecewise smooth function with a jump discontinuity across a smooth surface $\mathcal S$. Let $f_{\Lambda\epsilon}$ denote the Lambda tomography (LT) reconstruction of $f$ from its discrete Radon data $\hat f(\alpha_k,p_j)$. The sampling rate along each variable is $\sim\epsilon$. First, we compute the limit $f_0(\check x)=\lim_{\epsilon\to0}\epsilon f_{\Lambda\epsilon}(x_0+\epsilon\check x)$ for a generic $x_0\in\mathcal S$. Once the limiting function $f_0(\check x)$ is known (which we call the discrete transition behavior, or DTB for short), the resolution of reconstruction can be easily found. Next, we show that straight segments of $\mathcal S$ lead to non-local artifacts in $f_{\Lambda\epsilon}$, and that these artifacts are of the same strength as the useful singularities of $f_{\Lambda\epsilon}$. We also show that $f_{\Lambda\epsilon}(x)$ does not converge to its continuous analogue $f_\Lambda=(-\Delta)^{1/2}f$ as $\epsilon\to0$ even if $x\not\in\mathcal S$. Results of numerical experiments presented in the paper confirm these conclusions. We also consider a class of Fourier integral operators $\mathcal{B}$ with the same canonical relation as the classical Radon transform adjoint, and a class of distributions $g\in\mathcal{E}'(Z_n)$, $Z_n:=S^{n-1}\times\mathbb R$, and obtain easy to use formulas for the DTB when $\mathcal{B} g$ is computed from discrete data $g(\alpha_{\vec k},p_j)$. Exact and LT reconstructions are particlular cases of this more general theory.\end{abstract}
\maketitle


\section{Introduction}\label{sec_intro}

Analysis of the resolution of tomographic reconstruction of a function $f$ from its discrete Radon transform data $\hat f(\al_k,p_j)$ is a practically important problem. Usually, it is solved in the setting of the sampling theory, which assumes that $f$ is essentially bandlimited \cite{nat93, pal95, far04}. An extension of this theory allows $f$ to have at most semiclassical singularities \cite{stef18}. Frequently, one would like to know how accurately and with what resolution the {\it classical} singularities of $f$ (e.g., a jump discontinuity across a smooth surface $\s$) are reconstructed. Let $f_\e$ denote the function reconstructed from discrete data, where $\e$ represents the data sampling rate. In \cite{kat_2017, kat19a, kat19b} the author initiated the analysis of reconstruction by focusing specifically on the behavior of $f_\e$ near a jump discontinuity of $f$. One of the main results of these papers is the computation of the limit 
\be\label{tr-beh}
f_0(\check x):=\lim_{\e\to0}f_\e(x_0+\e\check x)
\ee
for a generic point $x_0\in\s$. In \eqref{tr-beh} it is assumed that $\check x$ is confined to a bounded set. 
It is important to emphasize that both the size of the neighborhood around $x_0$ and the data sampling rate go to zero simultaneously in \eqref{tr-beh}. Once the limiting function $f_0(\check x)$ is known (which we call the discrete transition behavior, or DTB for short), the resolution of reconstruction can be easily computed. For simplicity, the dependence of $f_0(\check x)$ on $x_0$ is omitted from notation. In \cite{kat_2017} we find $f_0(\check x)$ for the Radon transform in $\br^2$ in two cases: $f$ is static and $f$ changes during the scan (dynamic tomography). In \cite{kat19a} we find $f_0(\check x)$ for the classical Radon transform in $\br^3$, and in \cite{kat19b} -- for a wide family of generalized Radon transforms in $\br^3$. A common thread through these calculations is that the well-behaved DTB (i.e., the limit in \eqref{tr-beh}) is guaranteed to exist only if $x_0\in\s$ is generic. Derivation of this property is closely connected with the uniform distribution theory \cite{KN_06}. Roughly, a point is generic (or, locally generic, to be more precise) if the available data is in general position relative to the local patch of $\s$ containing $x_0$.

In this paper we extend our results by considering more general {reconstruction} operators $\CB$, whose canonical relation coincides with that of the classical Radon transform adjoint. The first step is to apply a differential or pseudodifferential operator along the affine variable $p$ (which we denote $\CBa$), and the second step is to backproject to the image domain. The operators $\CB$ can preserve the degree of smoothness of $f$ (as is the case with exact reconstruction), and they can enhance the singularities of $f$. A common example of the latter is Lambda (also known as local) tomography \cite{vkk, rk, fbh01}. We also assume that $\CB$ acts on more general $\hat f$, where $f$ may have singularities other than jump discontinuities. 

Let $\ik$ be an interpolation kernel, which is applied to the data with respect to $p$. The discrete version of $\CB$, which is denoted $\CB_\e$, consists of applying $\CBa$ to the interpolated data (the filtering step), and then approximating the integral with respect to $\al$ (the backprojection step) by summing over the available directions $\al_k$. 

The paper is organized as follows. In Section~\ref{ltwosm} we consider Lambda tomography (or, LT for short) in $\br^2$ in the case when $f$ has a jump discontinuity across a smooth and convex surface $\s$. Let $f_\Lambda:=(-\Delta)^{1/2}f$ denote the LT reconstruction from continuous data, and $\lte$ - LT reconstruction from discrete data. In this case, $\CBa=\pa_p^2$. At the beginning of Section~\ref{ltwosm} we introduce necessary notations, key formulas, and give the definition of a generic point. In Subsection~\ref{ernosm} we obtain the DTB (more precisely, the edge response since $f$ has a jump discontinuity) of LT. We show that if $x_0\in\s$ is generic, then the limit  
\be\label{tr-beh-lt}
f_0(\check x):=\lim_{\e\to0}\e\lte(x_0+\e\check x)
\ee
exists. 
Since LT enhances singularities by 1 in the Sobolev scale, i.e., $f_\Lambda \in H^{s-1}(\br^2)$ if $f\in H_0^s(\br^2)$, we have to multiply $\lte$ by $\e$ when computing $f_0$. Additionally, it turns out that $f_0$ equals to the convolution of the leading singularity of $f_\Lambda$ at $x_0$ and $\ik$ (see Lemma~\ref{lem-phi-lim}). By analogy, the leading singularity of a distribution across its singular support (e.g., of $f_\Lambda$ across $\s$) will be called continuous transition behavior, or CTB for short.

In Subsection~\ref{sec:LTline} we show that if $f$ has a jump discontinuity along a flat piece of $\s$, then $\lte$ has a non-local artifact along a line containing the flat piece. Moreover, the strength of the artifact is of the same order of magnitude $O(1/\e)$ as the useful singularity (cf. \eqref{tr-beh-lt}), and the artifact does not go to zero as $\e\to0$. In Subsection~\ref{sec:LTremote} we show that the effect of remote singularities is quite dramatic. If $f$ has a jump singularity across a smooth and convex surface $\s$, then, generally, $\lte(x)\not\to f_\Lambda(x)$ as $\e\to0$ even for $x\not\in \s$. The nature of finite sampling artifacts in the conventional tomographic reconstruction in $\br^2$ is well-known (see e.g., Section 12.3 in \cite{eps08} and references therein). Here we use a completely different approach, and discretization artifacts in LT are more severe than in the exact reconstruction.


In Sections~\ref{sec:gen} -- \ref{sec:lots} we extend the computation of the DTB to more general {reconstruction} operators and distributions. In Section~\ref{sec:gen} we start with a sufficiently regular conormal distribution $f\in\CE'(\br^n)$, which is {non-smooth across a smooth, convex surface $\s$ of codimension one. More precisely, the wave front set of $f$ is contained in the conormal bundle of $\s$.} We also introduce a class of Fourier Integral Operators (FIO) $\CB$: $\CE'(Z_n)\to \CD'(\br^n)$, where $Z_n=S^{n-1}\times\br$. To describe the leading singular behavior at a point of a distribution we use the definition of expansion in smoothness introduced in \cite{kat99_a}. This notion is closely related to the asymptotics at infinity of the principal symbol of a conormal distribution with a polyhomogeneous symbol (see e.g. the proof of Proposition 18.2.2 in \cite{hor3} for a related argument). {However, the expansion in smoothness is more convenient for the purposes of this paper as it fits well with the idea of transition behavior}.

In the rest of Section~\ref{sec:gen}, we compute the leading singularities of $f$, $\CB f$ (or, CTB), and $\hat f$ given the asymptotics of the Fourier transform of $f$ at infinity. See Lemmas~\ref{lem:f0sing}, \ref{cont-trans}, and \ref{lem:fhsing}, respectively.
Even though these calculations are fairly straightforward, the obtained formulas are needed in what follows and make the paper self-contained. The leading singularity of $\CB f$ is used in a generalization of Lemma~\ref{lem-phi-lim} (see Theorem~\ref{thrm:lot-main}, where the CTB is denoted $\mu$). The leading singularity of $\hat f$ is used as a starting point when deriving the DTB of the reconstruction $\CB_\e\hat f$ (see \eqref{gfn-exp}). More general calculations relating the singularities of $f$ and $\hat f$ are in \cite{ar01, rz2, rz1}. Our approach is simpler, and it is convenient to have all the necessary formulas in one place. 

In Section~\ref{sec:main-ass} {we introduce a more general class of distributions $g\in\CE'(Z_n)$, whose singularities resemble those of $\hat f$ obtained in Section~\ref{sec:gen}. The singular support of $g$ is a subset of a smooth, convex, codimension one surface in $Z_n$. The generalization is along two directions. First, we relax the requirement that $g$ be in the range of the Radon transform. Second, we impose a fairly weak assumption about the behavior of $g$ near its singular support.} Then we introduce a more general interpolating kernel and the definition of a generic point. In the remainder of Section~\ref{sec:conn} we compute the DTB of $\CB_\e g$ by retaining only the leading order terms in $\CB$ and $g$ (see Theorems~\ref{lead-thrm-pos} and \ref{lead-thrm-zero}). In the spirit of \eqref{tr-beh} and \eqref{tr-beh-lt}, the DTB is computed using the formula
\be\label{tr-beh-v2}
f_0(\check x):=\lim_{\e\to0}\e^a(\CB_\e g)(x_0+\e\check x)
\ee
for some $a\ge0$. The value of $a$ depends on how singular $\CB g$ is at $x_0$. 
In the case of exact reconstruction, if, for example, $f=\CB \hat f$ has a jump discontinuity, then $a=0$ and we get \eqref{tr-beh}. In the case of LT, if $f$ has a jump across $\s$, then $f_\Lambda(x_0+h\dir_0)\sim 1/h$ and $a=1$ (cf. \eqref{tr-beh-lt}). Here $\dir_0$ is a vector normal to $\s$ at $x_0$. 

In Section~\ref{sec:lots} we show that if either $\CB$ or $g$ is missing the leading term, then $\CB_\e g$ does not exhibit transition behavior. At the end of Section~\ref{sec:lots} we state our main result, which describes the DTB of $\CB_\e g$ for the classes of operators $\CB$ and distributions $g$ introduced in Sections~\ref{sec:gen} and \ref{sec:main-ass}, respectively. 

As mentioned above, the DTB of $\lte$ across a smooth and strictly convex segment of $\s$ equals to the convolution of the interpolation kernel $\ik$ with the CTB of $f_{\Lambda}$ across $\s$. The same pattern holds more generally: the DTB of $\CB_\e g$ is the convolution of the interpolation kernel and the CTB of $\CB g$. Our formulas can be used for easy calculation of the resolution for a  variety of tomographic type reconstructions from discrete data. {In turn, this can be used  for optimizing both the data collection protocol and the reconstruction algorithm. For example, if one is interested in locating a faint jump in a reconstructed image, one can design an edge-enhancing reconstruction algorithm (e.g., of LT type) and interpolation kernel, so that the jump stands out most clearly. The choice of the reconstruction operator $\CB$ (and its discrete counterpart $\CB_\e$) affects the CTB (respectively, DTB), and that affects the detectability of the jump. Besides LT, another example of edge enhancing reconstruction to which our theory applies is computing the derivatives of $f$ directly from the data \cite{hl12, louis16}. }

In Section~\ref{sec:pixsize} we show that if the data are the discrete values of $g$ convolved with some detector aperture function, then the DTB remains qualitatively the same. It is obtained by convolving the CTB of $\CB g$ with $\ik$ and with the aperture function. This is consistent with \cite{stef18}, where a similar phenomenon was observed for semiclassical singularities. Nevertheless, smoothing the data over intervals of length $\sim\e$ does not allow one to relax the requirement that $x_0$ be generic. If $x_0$ is not generic, the behavior of reconstruction may differ significantly from the predicted one, and this is confirmed by numerical experiments. Thus, the requirement that $x_0$ be generic is a phenomenon associated with clasical singularities, as it does not arise in the semiclassical case. Results of numerical experiments are in Section~\ref{sec:numerix}. They are in agreement with all the conclusions regarding the behavior of LT obtained in Section~\ref{ltwosm}. In particular, we show that the behavior of $\lte$ is much more sensitive to whether $x_0$ is generic or not than in the case of exact reconstruction (see \cite{kat19a}). For the convenience of the reader, most of the proofs are moved from the main text to the appendices.

{Besides linear algorithms, there exist a variety of other approaches to reconstruction from discrete tomographic data \cite{hny13}. Many of them, for example, iterative algorithms, do not fall under the theory developed in this paper. Some iterative algorithms, e.g. those that use Total Variation (or any other edge-preserving prior) as a regularizer, enhance edges. As a result, they may provide resolution higher than that predicted by the linear theory. Nevertheless, our results are valuable because of several reasons. (a) The linear theory provides a baseline of practically achievable resolution that a nonlinear algorithm can be compared with. (b) Our linear theory describes the resolution as a function of the location of the singularity and its orientation. There is no general theory for nonlinear algorithms, and one has to conduct extensive numerical experiments to study their resolution. For example, to obtain resolution measurements in $\br^3$, one generally has to sample the five-dimensional space $\br^3\times S^2$ of point-direction pairs. Here $S^2$ denotes the unit sphere in $\br^3$. Given that iterative algorithms are computationally intensive, such a comprehensive analysis can be prohibitive. (c) The resolution of nonlinear methods is contrast dependent (e.g., lower contrast features are reconstructed with lower resolution), which makes their resolution analysis even more computationally demanding.} 

{Finally, our analysis is practically important because filtered back-projection (FBP) algorithms, which are linear, are still widely used in cases where the amount of data is high (as is the case in micro CT) or when a simple and easy to use reconstruction algorithm is preferred.  For example, see a recent book \cite{orh20}, where applications of micro CT in areas such as Bone Morphometry and Densitometry, Osteoporosis Research, Cardiovascular Engineering and Bio-inspired Design,  Materials Science and Aerospace Engineering, and many others are described. As is stated on p. 29 of \cite{orh20}, ``The filtered back-projection method is the most common method used in the reconstruction.'' Another important application of (linear) FBP algorithms is where high throughput is essential (e.g., in wood mills \cite{gku16} and airport security scanning \cite{kyf19, tmk19}).
}

\section{Analysis of Lambda tomography reconstruction}\label{ltwosm}

\subsection{Preliminary material} In this section we consider functions, which can be represented as a finite sum
\begin{equation}\label{f_def}
f(x)=\sum_j \chi_{D_j} f_j(x),
\end{equation}
where $\chi_{D_j}$ is the characteristic function of the domain $D_j\subset \br^2$. For each $j$:
\begin{itemize}
\item[(f1)] $D_j$ is bounded,  
\item[(f2)] The boundary of $D_j$ is piecewise $C^{\infty}$,  
\item[(f3)] $f_j$ is $C^\infty$ in a domain containing the closure of $D_j$.
\end{itemize}
By construction, $\text{singsupp}(f)\subset \s:=\cup_j \pa D_j$.

The Lambda (or, local) tomography (LT) reconstruction is given by \cite{vkk, rk, fbh01}
\be\label{lt-main}
f_\Lambda(x):=(\Lambda f)(x)=-\frac1{2\pi}\int_{-\pi/2}^{\pi/2}\hat f''(\al,\al\cdot x)d\al,
\ee
where $\hat f=Rf$. As is well known \cite{rk}, $\Lambda f={\mathcal F}^{-1}(|\xi|\tilde f(\xi))$, where $\tilde f$ is the Fourier transform of $f$. In this paper, the Fourier transform and its inverse are defined as follows:
\be\label{ftandinv}
\tilde f(\xi)=({\mathcal F}f)(\xi)=\int f(x)e^{i\xi\cdot x}dx,\
f(x)=({\mathcal F}^{-1}\tilde f)(\xi)=\frac1{(2\pi)^n}\int \tilde f(\xi)e^{-i\xi\cdot x}d\xi,
\ee
where $n$ is the dimension of the space.

Suppose $\hat f(\al,p)$ is known at the points
\begin{equation}\label{data_pts}
\al_k=\Delta\al(q_\al+k),\ p_j=j\Delta p,\ \Delta p=\e,\ \Delta\al={\kappa}\e,
\end{equation}
for some fixed $\kappa>0$ and $q_\al\in \br$. All our results are asymptotic as $\e\to0$.

Let $\ik$ be a function, which satisfies the following assumptions:
\begin{itemize}
\item[IK1.] $\ik$ is exact up to the degree $2$, i.e.
\be\label{ker-int}
\sum_{j\in \mathbb Z} j^m\ik(t-j)=t^m,\quad 0\le m\le 2,\ t\in\br;
\ee
\item[IK2.] $\ik$ is compactly supported;
\item[IK3.] One has $\ik^{(j)}\in L^{\infty}(\br)$, $0\le j\le 3$; and
\item[IK4.] $\ik$ is normalized, i.e. $\int_{\br}\ik(y)dy=1$.
\end{itemize}
The interpolated in $p$ version of $\hat f$ becomes
\be\label{g-int}
\hat f_\e(\al_k,p):=\sum_j \hat f(\al_k,\e j) \ik\left(\frac{p-\e j}{\e}\right).
\ee

Pick a point $x_0\in \s$ such that the curvature of $\s$ at $x_0$ is not zero. Let $\dir_0=(\cos\theta_0,\sin\theta_0)$ be the normal, which points from $x_0$ towards the center of curvature of $\s$ at $x_0$. We will call the side of $\s$ where $\dir_0$ points ``positive'', and the opposite side - ``negative''. 

\begin{definition}
The point $x_0\in\s$ is generic if the quantity $(\Theta_0^\perp\cdot x_0){{\kappa}}$ is irrational.
\end{definition}

Let $\chi(\al)$ be a smooth cut-off supported in a small neighborhood of $\theta_0$, $\theta_0\in\ts(\chi)\subset(-\pi/2,\pi/2)$, such that $\chi(\theta_0)=1$. If $\theta_0\in \{\pm\pi/2\}$, we can shift the interval of integration in \eqref{lt-main} so that $\theta_0$ is in its interior. By linearity and in view of the partition of unity-type arguments, without loss of generality we insert the cut-off in \eqref{lt-main} and define the reconstruction from discrete data using \eqref{lt-main}, \eqref{data_pts}, and \eqref{g-int}:
\be\label{fe-0}
\lte(x):=-\frac{1}{2\pi\e^2}\sum_k \sum_j\ik''\left(\frac{\al_k\cdot x-\e j}{\e}\right)
\hat f(\al_k,p_j)\chi(\al_k)\Delta\al.
\ee

\subsection{Edge response}\label{ernosm}

Pick a generic $x_0\in\s$. By linearity, we may suppose that (i) $f(x)\equiv0$ outside a small neighborhood of $x_0$, and (ii) $f(x)\equiv0$ on the negative side of $\s$. In this case, near $\text{singsupp}(\hat f)$ we have \cite{ar01, rz2, rz1}
\be\label{g-ass}
\hat f(\al,p)=2 f_+(\al) \sqrt{2R(\al)}(p-H(\al))_+^{1/2}+O\left((p-H(\al))_+^{3/2}\right),
\ee
where the big-O term can be differentiated with respect to $p$. The function $H:\ts(\chi)\to\br$ is defined by the condition that $\{x\in\br^2:\,x\cdot\al=H(\al)\}$, $\al\in\ts(\chi)$, is a family of lines tangent to $\s$ near $x_0$, $f_+(\al)$ is the limiting value of $f$ from the positive side at the point of tangency, and $R(\al)$ is the radius of curvature of $\s$ at the point of tangency. Substitute \eqref{data_pts} and \eqref{g-ass} into \eqref{fe-0}:
\be\label{fe-1-nosm}\begin{split}
\lte(x):=&-\frac{1}{2\pi\e^2}\sum_k \sum_j\ik''\left(\frac{\al_k\cdot x-\e j}{\e}\right)
\hat f(\al_k,p_j)\chi(\al_k){{\kappa}}\e\\
=&\frac{1}{\e^2}\sum_k \sum_j\ik''\left(\frac{\al_k\cdot x-\e j}{\e}\right)
\biggl[\rho(\al_k)(\e j-H(\al_k))_+^{1/2}\\
&\hspace{1cm}+O\left((\e j-H(\al_k))_+^{3/2}\right)\biggr]\chi(\al_k){{\kappa}}\e, \ \rho(\al):=-\frac{f_+(\al) \sqrt{2R(\al)}}{\pi}.
\end{split}
\ee
In what follows, the quantities $\rho(\theta_0)$, $f_+(\theta_0)$, and $R(\theta_0)$ are denoted by $\rho$, $f_+$, and $R$, respectively. Set
\be\label{xe-or-def}
\xe:=x_0+\e \check x,\ h:=\Theta_0\cdot\check x,\ p(\al):=\al\cdot x_0-H(\al),
\ee
where $\check x$ is confined to a bounded set. We have 
\be\label{p0asymp}
H(\al)=\al\cdot x_0-(R/2)(\al-\theta_0)^2+O((\al-\theta_0)^3).
\ee
With a slight abuse of notation, here and in a few other places below we use $\al$ (and $\al_k$) both as a vector and as a scalar. We believe that the meaning of the variable is clear from the context in each particular case.

In view of \eqref{fe-1-nosm}, define
\be\label{psi-def}
\psi(t,p):=\sum_j \ik''(t-j)(j-p)_+^{1/2}.
\ee
The following statements are immediate:
\be\label{psi-props}\begin{split}
&\psi(t,p)=0\text{ if }t-p<c \text{ for some $c<0$},\\
&\psi(t,p)=O((t-p)^{-3/2}),\ t-p\to+\infty,\\
&\psi(t,p)=\psi(t-m,p-m),\ m\in\mathbb Z.
\end{split}
\ee
The leading order term in $\lte$, which is obtained by dropping the big-$O$ term in \eqref{fe-1-nosm}, is given by
\be\label{fe-psi-nosm}\begin{split}
g_\e^{(1)}(\xe)&:= \frac{{{\kappa}}}{\e^{1/2}}\sum_k \rho(\al_k)\psi\left(\al_k\cdot\check x +\frac{\al_k\cdot x_0}{\e}, \frac{H(\al_k)}{\e}\right)\chi(\al_k)\\
&=\frac{1}{\e}\sum_k \rho(\al_k)\psi\left(\al_k\cdot\check x+\frac{\al_k\cdot x_0}{\e}, \frac{\al_k\cdot x_0}{\e}-\frac{p(\al_k)}{\e}\right)\chi(\al_k){{\kappa}}\e^{1/2}.
\end{split}
\ee
Pick a sufficiently large $A>0$, and introduce two sets:
\be\label{two-sets}
\Omega_a:=\{\al\in\ts(\chi):\,|\al-\theta_0|\le A\e^{1/2}\},\ \Omega_b:=\{\al\in\ts(\chi):\,|\al-\theta_0|> A\e^{1/2}\}.
\ee
The sum in \eqref{fe-psi-nosm} splits into two:
\be\label{fe-psi-parts}\begin{split}
g_\e^{(1)}(\xe)=\sum_k (\cdot)=\sum_{\al_k\in\Omega_a} (\cdot)+\sum_{\al_k\in\Omega_b} (\cdot)=:g_\e^{(1a)}(\xe)+g_\e^{(1b)}(\xe).
\end{split}
\ee
 We have
\be\label{ri-term}\begin{split}
\al_k\cdot\check x&=h+O(\e^{1/2}),\\ 
\frac{\al_k\cdot x_0}{\e}&=\frac{\Theta_0\cdot x_0}{\e}+\frac{\Theta_0^\perp\cdot x_0({{\kappa}}\e (q_\al+k)-\theta_0)}{\e}-\frac{\Theta_0\cdot x_0(\al_k-\theta_0)^2}{2\e}+O(\e^{1/2})\\ 
&=A_\e+a k-\frac{\Theta_0\cdot x_0(\al_k-\theta_0)^2}{2\e}+O(\e^{1/2}),\ \al_k\in\Omega_a,\ a:=(\Theta_0^\perp\cdot x_0){{\kappa}}.
\end{split}
\ee
From \eqref{psi-def} and the property IK3 of $\ik$ it follows that 
\be\label{another-psi-pr}
\psi(t+\e,p)-\psi(t,p)=O(\e),\ \psi(t,p+\e)-\psi(t,p)=O(\e^{1/2})
\ee
when $t-p$ is bounded. By \eqref{p0asymp} and the third line in \eqref{psi-props}, this gives
\be\label{g1a-st1}\begin{split}
\e & g_\e^{(1a)}(\xe)\\
&=\sum_{\al_k\in\Omega_a} (\rho+O(\e^{1/2})) \psi\biggl(h+A_\e+a k-\frac{\Theta_0\cdot x_0(\al_k-\theta_0)^2}{2\e}+O(\e^{1/2}), \\
&\qquad\qquad A_\e+a k-\frac{(R+\Theta_0\cdot x_0) (\al_k-\theta_0)^2}{2\e}+O(\e^{1/2})\biggr){{\kappa}}\e^{1/2}\\
&=\rho\sum_{\al_k\in\Omega_a} \psi\left(h+r_k-\frac{\Theta_0\cdot x_0(\al_k-\theta_0)^2}{2\e}, r_k-\frac{(R+\Theta_0\cdot x_0) (\al_k-\theta_0)^2}{2\e}\right){{\kappa}}\e^{1/2}\\
&\hspace{1cm}+O(\e^{1/4}),\ r_k:=\left\{A_\e+a k\right\},
\end{split}
\ee
where we have used that the sums in \eqref{g1a-st1} are bounded as $\e\to0$. Here and in what follows, $\Theta_0^\perp=(-\sin\theta_0,\cos\theta_0)$, and $\{t\}$, $t\in\br$, denotes the fractional part of a number. Set $\tilde \al_k:=(\al_k-\theta_0)/\e^{1/2}$. Clearly, $\tilde \al_{k+1}-\tilde \al_k={{\kappa}}\e^{1/2}$. If $x_0$ is generic, i.e. $a$ is irrational, then $r_k$ are uniformly distributed mod 1 (see \cite{KN_06, kat_2017, kat19a}). Taking the limit as $\e\to0$ and arguing similarly to \cite{kat_2017, kat19a, kat19b} gives:
\be\label{ga-lim}\begin{split}
\lim_{\e\to0}&\e g_\e^{(1a)}(\xe)\\
&= \rho\int_{|\tilde \al|\le A}\int_0^1 \psi\biggl(h+r-\frac{(\Theta_0\cdot x_0)\tilde\al^2}2, 
r-\frac{(R+\Theta_0\cdot x_0)\tilde \al^2}2\biggr)dr d\tilde \al\\
&= -\frac{4f_+}{\pi}\int_0^{A\sqrt{\frac{R}2}}\int_0^1 \psi\left(h+r-\frac{\Theta_0\cdot x_0}{R}t^2, r-\left(1+\frac{\Theta_0\cdot x_0}{R}\right)t^2\right)dr dt.
\end{split}
\ee

Next, consider $g_\e^{(1b)}$. Since $p'(\theta_0)=0$, $p''(\theta_0)>0$, and $\ts(\chi)$ is sufficiently small, there exists $c_1>0$ such that $p(\al)>c_1(\al-\theta_0)^2$ when $\al\in\Omega_b$. Hence, it follows from \eqref{psi-props} that
\be\label{psi-bnd}
\left|\psi\left(\al\cdot\check x+\frac{\al\cdot x_0}{\e}, \frac{\al\cdot x_0}{\e}-\frac{p(\al)}{\e}\right)\right|\le c_2\left[\frac{(\al-\theta_0)^2}\e\right]^{-3/2},\ \al\in\Omega_b,
\ee
for some $c_2>0$. Here we use that $\check x$ is confined to a bounded set. Therefore $g_\e^{(1b)}$ admits the bound
\be\label{ge1b-bnd}
|\e g_\e^{(1b)}(\xe)|= O(\e^{1/2})\sum_{k=A/\e^{1/2}}^{O(1/\e)}\left[\frac{(\e k)^2}\e\right]^{-3/2}=O(1/A^2),
\ee
and the last big-$O$ is uniform in $\e$.

Finally, we estimate the contribution to $\lte$ that comes from the big-$O$ term in \eqref{fe-1-nosm}. As is easily seen,
\be\label{less-sing}\begin{split}
\sum_j\ik''\left(\frac{\al\cdot\xe-\e j}{\e}\right)
O\left((\e j-H(\al))_+^{3/2}\right)=
\begin{cases} O(\e^{3/2}),& \al\in \Omega_a,\\
O(\e^2)|\al-\theta_0|^{-1},& \al\in \Omega_b.
\end{cases}
\end{split}
\ee
For example, the top case follows because the number of nonzero terms in the sum is finite, and $\al\cdot\xe-H(\al)=O(\e)$ when $\al\in\Omega_a$. Hence the big-$O$ term on the left and the sum are $O(\e^{3/2})$. See \eqref{big-F} and \eqref{F-bounds} in Appendix~\ref{sec:prflem} for more general estimates of this kind.

Substituting \eqref{less-sing} into \eqref{fe-1-nosm} shows that this remaining contribution is 
\be\label{ls-est}
\e^{-1}\left(\sum_{k=1}^{O(\e^{-1/2})}O(\e^{3/2})+\sum_{k=O(\e^{-1/2})}^{O(\e^{-1})}O(\e^2)(\e k)^{-1}\right)=O(\ln(1/\e)).
\ee

Combining \eqref{ga-lim}, \eqref{ge1b-bnd}, and \eqref{ls-est} and using that $A>0$ can be arbitrarily large gives
\be\label{final-lim}\begin{split}
\lim_{\e\to0}\e \lte(\xe)=  -\frac{4 f_+}{\pi}\ioi\int_0^1 \psi\left(h+r-\frac{\Theta_0\cdot x_0}{R}t^2, r-\left(1+\frac{\Theta_0\cdot x_0}{R}\right)t^2\right)dr dt.
\end{split}
\ee
By \eqref{psi-def} and \eqref{final-lim}, the unit edge response equals
\be\label{er-hr}\begin{split}
\Phi(h):=& -\frac{4}{\pi}\ioi\int_0^1 \psi\left(h+r-\frac{\Theta_0\cdot x_0}{R}t^2, r-\left(1+\frac{\Theta_0\cdot x_0}{R}\right)t^2\right)dr dt\\
=&  -\frac{4}{\pi}\ioi\int_{\br} \ik''(h+r)(t^2-r)_+^{1/2}dr dt.
\end{split}
\ee
The integral in \eqref{er-hr} can be significantly simplified. Skipping the prefactor and integrating by parts once gives
\be\label{er-hr-byparts}\begin{split}
&  \ioi\int_{\br} \ik''(h+r)(t^2-r)_+^{1/2}dr dt
=\frac12\ioi\int_{\br} \ik'(h+r)(t^2-r)_+^{-1/2}dr dt\\
&=\frac12\lim_{A\to\infty}\int_{-\infty}^{A^2}\ik'(h+r)\int_{r_+^{1/2}}^A (t^2-r)^{-1/2}dt dr\\
&=\frac12\lim_{A\to\infty}\int_{-\infty}^{A^2}\ik'(h+r)\left(\log((A^2-r)^{1/2}+A)-\frac12\log|r|\right)dr=\frac14 \int_{\br}\ik(h+r)\frac{dr}r.
\end{split}
\ee
When evaluating the limit as $A\to\infty$ in \eqref{er-hr-byparts} we used that $\ik$ is compactly supported. Combining \eqref{final-lim}--\eqref{er-hr-byparts} and using that a smooth part of $\hat f$ leads to a bounded contribution to $\lte$ proves the following result.
\begin{lemma}\label{lem-phi-lim} Let $f$ be given by \eqref{f_def} and satisfy conditions (f1)--(f3). Suppose $x_0\in\s$ is generic, and the line $\{x\in\br^2:\,(x-x_0)\cdot\dir_0=0\}$ is not tangent to $\s$ anywhere except at $x_0$. If $\ts(\chi)$ is contained in a  small neighborhood of $\theta_0$, $\chi(\theta_0)=1$, and $\lte$ is given by \eqref{fe-0}, one has 
\be\label{final-lim-final}
\lim_{\e\to0}\e \lte(\xe)= (f_+(x_0)-f_-(x_0))\int_{\br}\frac{\ik(h-r)}{\pi r}dr,
\ee
where $f_\pm(x_0)$ are the limiting values of $f$ at $x_0$ from the positive and negative sides of $\s$, respectively.
\end{lemma}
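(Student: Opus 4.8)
The plan is to obtain \eqref{final-lim-final} by combining a chain of reductions with the two-scale equidistribution analysis that underlies every DTB computation in this circle of ideas. First I would use linearity and a partition-of-unity argument to pass to the model situation used before \eqref{g-ass}: $f$ supported in a small neighborhood of $x_0$ and vanishing on the negative side of $\s$, so that the jump is carried entirely by $f_+$; in general $f_+$ is replaced by $f_+(x_0)-f_-(x_0)$ after subtracting off a function that is smooth near $x_0$. Since a smooth part of $\hat f$ contributes only $O(1)$ to $\lte$, it disappears after multiplication by $\e$, so only the conormal singularity of $\hat f$ matters. The hypothesis that the normal line at $x_0$ meets $\s$ only at $x_0$ guarantees that, inside $\ts(\chi)$, the only tangency contributing to \eqref{fe-0} is the one described by $H$, $R$ and $f_+$, while the nonvanishing curvature makes $H$ a Morse function of $\al$ near $\theta_0$, giving \eqref{p0asymp}.

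Second, I would substitute the asymptotics \eqref{g-ass} into \eqref{fe-0} and peel off the leading term \eqref{fe-psi-nosm} built from the kernel $\psi$ of \eqref{psi-def}; the subleading $O((p-H)^{3/2})$ piece and the interpolation remainder contribute only $O(\ln(1/\e))$ (cf. \eqref{less-sing}--\eqref{ls-est}), hence $o(1/\e)$ after multiplying by $\e$. Then I split the angular sum at scale $\e^{1/2}$ into $\Omega_a$ and $\Omega_b$ as in \eqref{two-sets}. On $\Omega_b$, the decay $\psi(t,p)=O((t-p)^{-3/2})$ together with $p(\al)\gtrsim(\al-\theta_0)^2$ yields the bound $|\e g_\e^{(1b)}(\xe)|=O(1/A^2)$, uniform in $\e$, as in \eqref{ge1b-bnd}. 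On $\Omega_a$ I would rescale $\tilde\al_k=(\al_k-\theta_0)/\e^{1/2}$, so consecutive nodes are $\kappa\e^{1/2}$ apart, and expand the fast phase $\al_k\cdot x_0/\e$ as in \eqref{ri-term}; modulo $1$ it reduces to $r_k=\{A_\e+ak\}$ with $a=(\Theta_0^\perp\cdot x_0)\kappa$. Here genericity enters: $a$ irrational makes $(r_k)$ equidistributed mod $1$, and a Riemann-sum-plus-equidistribution argument in the spirit of \cite{kat_2017,kat19a,kat19b} lets me pass to the limit and obtain the double integral \eqref{ga-lim}.

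Finally, letting $A\to\infty$ the $\Omega_b$ and remainder bounds vanish, so \eqref{final-lim} holds, and it remains to simplify the improper integral. I would change variables to remove the $t^2$-shift, integrate by parts once in $r$, use the explicit evaluation of $\int_{r_+^{1/2}}^A(t^2-r)^{-1/2}dt$, and invoke the compact support of $\ik$ to discard the boundary and logarithmic terms, arriving at $\Phi(h)=-\tfrac1\pi\,\mathrm{p.v.}\int_\br\ik(h+r)\,dr/r$ as in \eqref{er-hr-byparts}. The substitution $r\mapsto-r$ flips the sign and rewrites this as $\tfrac1\pi\int_\br\ik(h-r)\,dr/r$; reinstating the jump factor $f_+(x_0)-f_-(x_0)$ from the first reduction then gives \eqref{final-lim-final}.

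The main obstacle is the analysis on $\Omega_a$: the summand is a product of a slowly varying factor (the values $\rho(\al_k)$ and the dependence of $\psi$ on $\tilde\al_k$) and the kernel $\psi$ evaluated at arguments containing the fast fractional part $r_k$, so one needs a genuine two-scale argument, combining a Riemann-sum approximation in $\tilde\al$ with Weyl equidistribution in $r_k$, and must control the error uniformly in $\e$ (using \eqref{another-psi-pr} and the tail decay of $\psi$) so that it still vanishes when $A\to\infty$. This is exactly where the genericity of $x_0$ is indispensable: without it $(r_k)$ need not equidistribute, and the limit in \eqref{tr-beh-lt} may fail to exist.
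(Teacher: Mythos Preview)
Your proposal is correct and follows essentially the same route as the paper: the reduction to the one-sided model, the expansion \eqref{g-ass}, the $\Omega_a/\Omega_b$ split at scale $\e^{1/2}$, the equidistribution argument on $\Omega_a$ via genericity of $x_0$, the $O(1/A^2)$ bound on $\Omega_b$, the $O(\ln(1/\e))$ remainder estimate, and the integration-by-parts simplification \eqref{er-hr-byparts} are exactly the steps the paper carries out. One small slip: the line $\{(x-x_0)\cdot\dir_0=0\}$ is the \emph{tangent} line to $\s$ at $x_0$ (not the normal line), and the hypothesis is that it is not tangent to $\s$ elsewhere---you use it correctly, though.
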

Note that \eqref{final-lim-final} is consistent with Theorem 5.4.1 in \cite{rk}, i.e. the edge response is just a smoothed version of the ideal response (or, CTB) given in (5.4.4) of \cite{rk}. In \cite{rk}, smoothing is due to a smoothing kernel, and here smoothing is due to finite data sampling. This is consistent with the general situation, see Theorems~\ref{lead-thrm-pos}, \ref{lead-thrm-zero} below. In these theorems, $\mu$ is the ideal transition behavior of the reconstruction from continuous data, or CTB (cf. Lemma~\ref{cont-trans}).

\subsection{Line artifact}\label{sec:LTline}
In this subsection we consider the effect of a straight line edge in $f$ on $\lte$. We show that a line edge may create a global artifact along the line containing the edge. The goal here is not to investigate the most general situation, but to understand the artifact qualitatively. Hence we consider a simple $f$, which vanishes outside some domain $D$ with convex boundary, and equals 1 close to a flat side of $\pa D$. Assume $\theta_0=0$ (i.e., $\Theta_0=(1,0)$) and 
\be\label{coords}
x_0=(H,b_0),\ P_1=(H,b_1),\ P_2=(H,b_2),\ b_1<b_2,\ a\not\in[b_1,b_2],
\ee
see Figure~\ref{fig:line_edge}.

\begin{figure}[h]
{\centerline{
{\epsfig{file=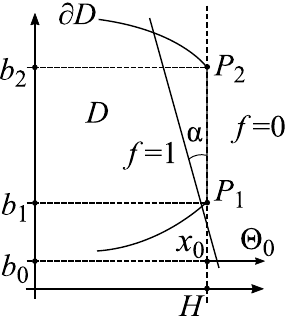, width=3.2cm}}
}}
\caption{Illustration of $f$ with jump discontinuity along a line segment.}
\label{fig:line_edge}
\end{figure}

Similarly to \eqref{two-sets}, split $\ts(\chi)$ into two sets:
\be\label{two-sets-line}
\Omega_a:=\{\al\in\ts(\chi):\,|\al|< A\e\},\ \Omega_b:=\{\al\in\ts(\chi):\,|\al|\ge A\e\},
\ee
for some sufficiently large $A>0$. We can select $A>0$ so large that no line $\{x\in\br^2:\,(x-(x_0+\e\check x))\cdot\al=0\}$, $\al\in\Omega_b$, intersects the line segment $[P_1,P_2]$ for all $\e>0$ sufficiently small. Recall that $\check x$ is confined to a bounded set. From \eqref{data_pts}, the number of $\al_k\in\Omega_a$ is uniformly bounded as $\e\to0$.

Let $g_\e^{(a)}$ and $g_\e^{(b)}$ denote the contributions to $\lte$ coming from $\al_k\in\Omega_a$ and $\al_k\in\Omega_b$, respectively. To compute $g_\e^{(a)}$, introduce the function
\be\label{aux-fn-edge}\begin{split}
\phi(p;t_1,t_2)&=\begin{cases} b_2-b_1,& p\le \min(t_1,t_2)\\
(b_2-b_1)\frac{\max(t_1,t_2)-p}{|t_2-t_1|},& \min(t_1,t_2)\le p\le \max(t_1,t_2)\\
0,&p\ge \max(t_1,t_2).
\end{cases}
\end{split}
\ee
This function models the leading singular behavior of $\hat f(\al,p)$ near $(\al,p)=(0,H)$:
\be\label{rt-appr-line}
\hat f(\al,p)=\phi(p;\al\cdot P_1,\al\cdot P_2)+O(\e),\
\al\in\Omega_a,\ p-H=O(\e).
\ee
As is easily checked, 
\be\begin{split}\label{phi-props}
&\phi(p;t_1,t_2)=\phi(rp;rt_1,rt_2),\ r>0, \\
&\phi(p-r;t_1-r,t_2-r)=\phi(p;t_1,t_2),\ r\in\br,\\
&\phi(p+O(\e);t_1+O(\e),t_2+O(\e))=\phi(p;t_1,t_2)+O(\e) \text{ if } |t_1-t_2|>\de
\end{split}
\ee
for some $\de>0$. Thus,
\be\label{lt-line-nosm}\begin{split}
&g_\e^{(a)}(\xe)\\
&=-\sum_{\al_k\in\Omega_a}\frac1{2\pi\e^2}
\sum_j\ik''\left(\frac{\al_k\cdot \xe-\e j}{\e}\right)\left(\phi(\e j;\al_k\cdot P_1,\al_k\cdot P_2)+O(\e)\right){{\kappa}}\e\\
&=-\frac{\kappa}{2\pi\e}\sum_{\al_k\in\Omega_a}
\sum_j\ik''\left(h+\frac{\al_k\cdot x_0}{\e}+O(\e)-j\right)\phi\left(j;\frac{\al_k\cdot P_1}{\e},\frac{\al_k\cdot P_2}{\e}\right)+O(1).
\end{split}
\ee
Suppose, for simplicity, that none of the angles $\al_k=\kappa\e(q_\al+k)$ equals zero, i.e. $q_\al\not\in\mathbb Z$ (cf. \eqref{data_pts}). In this case, $(\al_k\cdot (P_2-P_1))/\e$, is bounded away from zero, and the last equation in \eqref{phi-props} applies. 
Using the second and third lines in \eqref{phi-props} we find from \eqref{lt-line-nosm}: 
\be\label{lt-line-st2}\begin{split}
\e g_\e^{(a)}(\xe)
&=-\frac{\kappa}{2\pi}\sum_{\al_k\in\Omega_a}
\sum_j\ik''\left(h+r_k-j\right)\phi\bigl(j-r_k;(b_1-b_0){{\kappa}} (q_\al+k),\\
&\hspace{2.5cm}(b_2-b_0){{\kappa}} (q_\al+k)\bigr)+O(\e),\ r_k:=\left\{\frac{H}{\e}+b_0{{\kappa}} (q_\al+k)\right\}.
\end{split}
\ee

The remaining term is 
\be\label{lt-line-gb}
g_\e^{(b)}(\xe)=-\sum_{\al_k\in\Omega_b}\frac1{2\pi\e^2}
\sum_j\ik''\left(\frac{\al_k\cdot \xe-p_j}{\e}\right)\hat f(\al_k,p_j) \Delta\al.
\ee
By construction, $\hat f(\al,p)$ is smooth and bounded with all derivatives in a $O(\e)$-size neighborhood of any $(\al_k,p_j)$ such that $\al_k\in\Omega_b$ and $(\al_k\cdot \xe-p_j)/\e\in\ts(\ik)$. Hence it is easy to see that $g_\e^{(b)}(\check x)$ approaches a finite limit as $\e\to0$ independently of $\check x$. This limit depends on where $x_0$ is located relative to the segment $[P_1,P_2]$. For example, if $b_0<b_1$, as shown in Figure~\ref{fig:line_edge}, then $\hat f(\al,\al\cdot x_0)\equiv0$ if $\al\in\Omega_b$, $\al<0$, and we have
\be\label{gb-lim}
\lim_{\e\to0}g_\e^{(b)}(\xe)=-\frac1{2\pi}\int_{0^+}^{\pi/2} (\pa_p^2\hat f)(\al,\al\cdot x_0)\chi(\al) d\al.
\ee
Thus, $g_\e^{(b)}(\xe)=O(1)$. Since $\lte=g_\e^{(a)}+g_\e^{(b)}$, \eqref{lt-line-st2} shows that straight edges of $f$ create non-local artifacts in $\lte$ that are of the same order of magnitude as useful singularities (see \eqref{final-lim}), i.e. of order $O(1)/\e$. The $O(1)$ term has a fairly weak (and irregular) $\e$-dependence (via $r_k$).

\subsection{Effect of remote singularities.} \label{sec:LTremote}
Let $\Theta_0$ be the direction such that the line $(x-x_0)\cdot\Theta_0=0$ is tangent to $\s$ at some $z_0\not=x_0$ and $\theta_0\in(-\pi/2,\pi/2)$. Suppose that the curvature of $\s$ at $z_0$ is not zero. The main formula is \eqref{fe-psi-nosm}, where still $p(\theta_0)=0$, but $p'(\theta_0)\not=0$, i.e. $p(\al)$ is no longer quadratic near $\al=\theta_0$. As before, we suppose that $\ts(\chi)$ is sufficiently small and $\chi(\theta_0)=1$. Additionally, $p'(\al)\not=0$ on $\ts(\chi)$. Represent $\lte=g_\e^{(1)}+g_\e^{(2)}$, where $g_\e^{(1)}$ and $g_\e^{(2)}$ correspond to the leading and big-$O$ terms in \eqref{g-ass}, respectively. Thus, 
\be\label{remsing-1}\begin{split}
g_\e^{(1)}(\xe)&= \frac{\kappa}{\e^{1/2}}\sum_{k} \rho(\al_k)\psi\left(\al_k\cdot \check x+\frac{\al_k\cdot x_0}{\e}, \frac{H(\al_k)}{\e}\right)\chi(\al_k).
\end{split}
\ee
From the properties $\al\cdot z_0-H(\al)=O((\al-\theta_0)^2)$ and $\dir_0\cdot(x_0-z_0)=0$ it follows that there exists $c>0$ such that $|\al\cdot x_0-H(\al)|>c|\al-\theta_0|$, $\al\in\ts(\chi)$. Together with the second line in \eqref{psi-props} this implies that the sum in \eqref{remsing-1} is uniformly bounded as $\e\to0$. From IK1--IK3, it follows similarly to \eqref{psi-props}, \eqref{another-psi-pr} that 
\be\label{psi-bound}
|\psi(t,p+\e)-\psi(t,p)|\le c_1\begin{cases}
|\e|/|t-p|^{5/2},& |t-p|>c_2,\\
|\e|^{1/2},& |t-p|\le c_2,\end{cases}
\ee
for some $c_{1,2}>0$. Representing $H(\al)$ in the form
\be\label{H-form}\begin{split}
H(\al_k)&=\al_k\cdot z_0+O((\al_k-\theta_0)^2)
=\al_k\cdot x_0+\al_k\cdot(z_0-x_0)+O((\al_k-\theta_0)^2)\\
&=\al_k\cdot x_0+\dir_0^\perp\cdot(z_0-x_0)(\al_k-\theta_0)+O((\al_k-\theta_0)^2),
\end{split}
\ee
and combining this with \eqref{psi-bound} implies that replacing $H(\al_k)$ with the linear part on the right in \eqref{H-form}, and noticing that $\dir_0^\perp\cdot(z_0-x_0)\not=0$, changes the value of the sum in \eqref{remsing-1} by $O(\e^{1/2})$. Indeed, the error term is an expression of the kind $O(1)\sum_{k=1}^{O(1/\e)} (\e k^2)/k^{5/2}=O(\e^{1/2})$. In this calculation we assume that $\ts(\chi)$ is sufficiently small and $|H(\al)-\al\cdot x_0|\le 0.5|\dir_0^\perp\cdot(z_0-x_0)||\al-\theta_0|$, $\al\in\ts(\chi)$.

The second line in \eqref{psi-props} implies that replacing $\rho(\al_k)$ and $\chi(\al_k)$ with $\rho=\rho(\dir_0)$ and $\chi(\dir_0)=1$, respectively, changes the value of the sum by $O(\e^{1/2})$. Hence,
\be\label{remsing-2}\begin{split}
\e^{1/2}g_\e^{(1)}(\xe)= &\kappa\rho\sum_{k} \psi\left(\dir_0\cdot \check x+\frac{\al_k\cdot x_0}{\e},\frac{\al_k\cdot x_0}{\e}+\frac{\dir_0^\perp\cdot(z_0-x_0)(\al_k-\theta_0)}{\e}\right)\\
&+O(\e^{1/2}).
\end{split}
\ee
Similarly to \eqref{less-sing}, it is easy to show that
\be\label{less-sing-v2}\begin{split}
\left|\frac1{\e^2}\sum_j\ik''\left(\frac{\al\cdot\xe-\e j}{\e}\right)
O\left((\e j-H(\al))_+^{3/2}\right)\right|\le
\frac{c}{\e^{1/2}+|\al\cdot\xe-H(\al)|^{1/2}}
\end{split}
\ee
for some $c>0$. This gives $g_\e^{(2)}(\xe)=O(1)$, $\e\to0$. Combining the results produces
\be\label{remsing-3}\begin{split}
\lte(\xe)&= \frac{{{\kappa}}\rho}{\e^{1/2}}\sum_k \psi\left(h+r_k, r_k+\dir_0^\perp\cdot(z_0-x_0)\left[\kappa(q_\al+k)-\frac{\theta_0}{\e}\right]\right)+O(1),\\ 
r_k:&=\left\{\frac{\al_k\cdot x_0}{\e}\right\}.
\end{split}
\ee
As was mentioned, the sum in \eqref{remsing-3} is uniformly bounded, and there is no reason why it should identically equal zero. Thus, even convex pieces of $\s=\text{singsupp}(f)$ may create non-local artifacts when reconstructing from discrete data, and their strength grows like $\e^{-1/2}$ as $\e\to0$. These artifacts are expected to be of irregular, ripple-like shape due to the irregular behavior of the terms $r_k$ and $\theta_0/\e$. This also implies that $\lte$ does not generally converge to $f_{\Lambda}$ pointwise as $\e\to0$ if $f$ has jump discontinuities.  

\section{Computation of leading singularities in the continuous data case}\label{sec:gen}

Here we derive convenient formulas that are used for resolution analysis in all dimensions $n\ge 2$ and for a variety of singularities and reconstruction operators. The latter can be preserving the degree of smoothness or singularity-enhancing.

Suppose $f\in\CE'(\br^n)$ is a compactly supported distribution, and $(x_0,\xi_0)\in WF(f)$. 
{For convenience of the reader we remind the definition of the wave front set (see \cite{hor}, section 8.1).
\begin{definition} Let $f\in\mathcal D'(\br^n)$ be a distribution. The wave front set of $f$ is the complement of all pairs $(x_0,\xi_0)\in\br^n\times(\br^n\setminus 0)$ such that there exists $\phi\in C_0^\infty(\br^n)$ with $\phi(x_0)\not=0$ and an open cone $\Xi\ni\xi_0$ so that $|\mathcal F(\phi f)(\xi)|\le c_N(1+|\xi|)^{-N}$ for some $c_N>0$ and all $\xi\in\Xi$ and $N\ge 1$.
\end{definition}
}
Let $\Omega\subset S^{n-1}$ be a small neighborhood of $\dir_0:=\xi_0/|\xi_0|$.  We assume that $f$ is given by
\be\label{f-def}
f(x)=\frac1{(2\pi)^n}\int_{\br^n}\fs(\xi)e^{i(H(\xi)-\xi\cdot x)}d\xi,\ 
\fs\in C^\infty(\br^n),\ H(\xi)\in C^\infty(\br^n\setminus 0),
\ee
is sufficiently regular, and its Radon transform exists in the usual sense of functions. Here $H$ is real valued and homogeneous of degree one, $H'(\xi_0)=x_0$, and 
\be\label{f-lim}\begin{split}
&\fs(\la\al)\sim \omega(\al)\sum_{j\ge 0} \la^{-s_j}\fs_j(\al),\ \la\to+\infty;\\  
&s_0\ge (n+1)/2,\ s_0<s_1<\dots,\ s_j\to\infty,\ j\to\infty;\\ 
&\fs_j\in\coi(\Omega\cup(-\Omega)),\ j\ge 0; \\
&\omega(\al):=\frac{|\det \check H''(\al)|^{1/2}e^{-\frac{i\pi}4\text{sgn}\check H''(\al)}}{(2\pi)^{(n-1)/2}},\ \al\in \Omega\cup(-\Omega).
\end{split}
\ee 
The expansion in \eqref{f-lim} can be differentiated with respect to $\al$ and $\la$ any number of times, and all the resulting expansions are uniform with respect to $\al\in S^{n-1}$.
More specifically, for any $J\in\mathbb N$, $l=0,1,2,\dots$, and any multiindex $\nu$, there exists $a_{Jl\nu}\in \coi(\Omega\cup(-\Omega))$ so that
\be\label{vfn-exp-unif}
\left|\pa_\la^l\pa_\al^\nu \biggl(\fs(\la\al) - \omega(\al)\sum_{j=0}^{J-1} \la^{-s_j}\fs_j(\al)\biggr)\right|\le  a_{Jl\nu}(\al)\la^{-(s_J+l)},\ \la\ge 1,\al\in S^{n-1}.
\ee

In \eqref{f-lim} and everywhere below, $\check H''(\al)$, $\al\in S^{n-1}$, denotes the Hessian matrix of $H(\xi)$ {restricted to the plane tangent to $S^{n-1}$ at $\al$, $\xi\cdot\al=1$, and evaluated at $\xi=\al$. For example, if $\al=(0,\dots,0,1)$, then
\be\label{chH}
(\check H''(\al))_{jk}=\left.\frac{\pa^2 H(\xi_1,\dots,\xi_{n-1},1)}{\pa\xi_j\pa\xi_k}\right|_{\xi_1=\dots=\xi_{n-1}=0},\ 1\le j,k\le n-1.
\ee
In particular, $\check H''(\al)$ is a $(n-1)\times(n-1)$ square matrix}. Also, $\text{sgn}\check H''(\al)$ is the number of positive eigenvalues of $\check H''(\al)$ minus the number of negative eigenvalues of $\check H''(\al)$. As is seen, $H(\xi)$ is the homogeneous of degree one extension of $H(\al)$ used in Section~\ref{ltwosm} from $S^{n-1}$ to $\br^n$: $H(\xi)=|\xi|H(\xi/|\xi|)$. An additional assumption is
\be\label{more-assns}\begin{split}
&\check H''(\al) \text{ is negative definite on $\Omega$}.
\end{split}
\ee
Clearly, $\check H''(-\al)=-\check H''(\al)$, so $\check H''(\al)$ is positive definite on $-\Omega$. Therefore, $\text{sgn}\check H''(\pm\al)=\mp(n-1)$, $\al\in\Omega$, and 
\be\label{omega-pm}
\omega(-\al)=e^{-i(n-1)\frac\pi2}\omega(\al),\ \al\in\Omega.
\ee

{\begin{remark} Associated with $H$, there is a smooth surface of codimension one
\be\label{surf-def}
\s:=\{x\in\br^n:\, x=H'(\al),\al\in \Omega\}.
\ee
Recall that $H'(\al)$ is the derivative $H'(\xi)$ evaluated at $\xi=\al\in S^{n-1}$ (as opposed to the derivative on the unit sphere).
By Proposition 25.1.3 in \cite{hor4}, $f$ is a conormal distribution. In particular, its wave front set is contained in the conormal bundle of $\s$: $WF(f)\subset\{(x,\xi)\in \br^n\times(\br^n\setminus 0): x=H'(\xi),\pm\xi/|\xi|\in\Omega\}$.  See also section 18.2 and definition 18.2.6 in \cite{hor3} for a formal definition and in-depth discussion of conormal distributions. 
\end{remark}}

We want to reconstruct some image of $f$ from its Radon transform $\hat f(\al,p)$ in a neighborhood of $x_0$ using an operator $\CB$ of the form
\be\label{B-oper}
(\CB\hat f)(x):=\int_{S^{n-1}}\int_{\br} B(\al,\al\cdot x-p)\hat f(\al,p)dp d\al,\
B(\al,p)=\frac1{2\pi}\int\tilde B(\al,\la)e^{-i\la p}d\la.
\ee 
We assume that $\tilde B\in C^{\infty}(S^{n-1}\times \br)$ and
\be\label{B-lim}\begin{split}
&\tilde B(-\al,-\la)=\tilde B(\al,\la);\ \tilde B(\al,\la)\sim \sum_{j\ge0}\la^{\bt_j}\tilde B_j(\al),\ \la\to+\infty;\\
&\bt_0>\bt_1> \dots;\ \bt_j\to-\infty,\ j\to\infty;\ \tilde B_j(\al)\in C^{\infty}(S^{n-1}),\ j\ge 0,
\end{split}
\ee 
where the expansion can be differentiated with respect to $\al$ and $\la$ term by term any number of times, and it remains uniform with respect to $\al\in S^{n-1}$. Thus, $\CB:\CE'(Z_n)\to\CD'(\br^n)$ is an FIO with the same canonical relation as the adjoint Radon transform. 
Using that $\tilde B$ is even implies
\be\label{B-lim-pm}\begin{split}
\tilde B(\al,\la)\sim \sum_{j\ge0}(\la_+^{\bt_j}\tilde B_j(\al)+\la_-^{\bt_j}\tilde B_j(-\al)),\ \la\to\infty.
\end{split}
\ee

The standing assumptions are
\be\label{kappa-assns}
{\kappa_1}:=s_0-\frac{n+1}2\ge 0,\ {\kappa_2}:=\bt_0-s_0-\frac{n-3}2\ge 0.
\ee
An additional condition is
\be\label{B-adtnl}
\fs_0(-\al)\tilde B_0(-\al)=-\fs_0(\al)\tilde B_0(\al),\ \al\in \Omega\cup(-\Omega), \text{ if }\kappa_2=0.
\ee
See the text following \eqref{A-def} for the meaning of this condition.

To simplify notations, in what follows we write $\bt$ and $s$ for $\bt_0$ and $s_0$, respectively. 
The goal is to determine what the distribution $\CB\hat f$ looks like in a neighborhood of $x_0$. The first step is to determine what $f$ looks like near $x_0$. We are not interested in a complete description of $f$, but only in its leading order singularity near $x_0$, which is denoted $f_0$. 
\begin{definition}[\cite{kat99_a}]\label{lead-sing}
Given a distribution $f\in{\mathcal D}'(\br^n)$ and a point $x_0\in\br^n$, suppose there exists a distribution $f_0\in{\mathcal D}'(\br^n)$ so that for some $m_0\ge0$ and some $a\in\br$ the following equality holds 
\be\label{losing}
\lim_{\e\to0} \e^{m-a}(f,P_m(\pa_x)w_\e)=(f_0,P_m(\pa_x)w_1),\ 
w_\e(x):=\e^{-n}w((x-x_0)/\e),
\ee
for any $w\in\coi(\br^n)$, any homogeneous polynomial $P_m(x)$ of degree $m$, and any $m\ge m_0$. Then we say that $f_0$ is the leading order singularity of $f$ at $x_0$, and the corresponding notation is $f(x_0+\e\check x)\sim \e^a f_0(\check x)$, where $\hat x$ is confined to a bounded set.
\end{definition}

Let $\lceil \cdot\rceil$ denote the ceiling function: $\lceil t\rceil:=n+1$ if $t\in (n,n+1)$ for some $n\in\mathbb Z$, and $\lceil t\rceil:=n+1$ if $t=n$ for some $n\in\mathbb Z$. The following lemma is proven in Appendix~\ref{sec:mu-deriv}.

\begin{lemma}\label{lem:mu-der} For any $m\ge m_0:=\lceil\kappa_1\rceil$, one has 
\be\label{limf-res-0}
\lim_{\e\to0}\e^{m-{\kappa_1}}(f,P_m(\pa_x)w_\e)
=\frac1{2\pi}
\int P_m(-i\la \dir_0)\tilde w(-\la\dir_0)\tilde\mu(\la)d\la,
\ee 
where $\tilde w=\CF w$, and the distribution $\tilde\mu\in {\mathcal S}'(\br)$ is given by
\be\label{cpm-complete}\begin{split}
\tilde\mu(\la):=(2\pi)^{-(n-1)}(v_+\la_+^{-({\kappa_1}+1)}+v_-\la_-^{-({\kappa_1}+1)}),\ v_{\pm}:=\fs_0(\pm\dir_0).
\end{split}
\ee
\end{lemma}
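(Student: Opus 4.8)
The plan is to compute the limit in \eqref{limf-res-0} directly from the Fourier representation \eqref{f-def} of $f$, extracting only the leading term of the asymptotic expansion \eqref{f-lim}. First I would write out the pairing $(f, P_m(\pa_x)w_\e)$ using \eqref{f-def}: since $f(x)=(2\pi)^{-n}\int \fs(\xi)e^{i(H(\xi)-\xi\cdot x)}d\xi$, integrating against $P_m(\pa_x)w_\e$ and using that $\pa_x$ hitting $e^{-i\xi\cdot x}$ brings down $-i\xi$ (with the appropriate transpose/parity bookkeeping), the pairing becomes $(2\pi)^{-n}\int \fs(\xi)P_m(-i\xi)e^{iH(\xi)}\overline{\widetilde{(w_\e)}}$-type expression — more precisely $\int\fs(\xi)P_m(i\xi)\,\tilde w(-\e\xi)\,e^{i(H(\xi)-\xi\cdot x_0)}d\xi$ up to constants, using $w_\e(x)=\e^{-n}w((x-x_0)/\e)$ so $\widetilde{w_\e}(\xi)=e^{i\xi\cdot x_0}\tilde w(\e\xi)$. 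I would then pass to polar coordinates $\xi=\la\al$, $\la>0$, $\al\in S^{n-1}$, turning the integral into $\int_0^\infty\int_{S^{n-1}}\fs(\la\al)P_m(i\la\al)\tilde w(-\e\la\al)\la^{n-1}e^{i\la(H(\al)-\al\cdot x_0)}d\al\,d\la$.

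Next I would substitute the leading asymptotics $\fs(\la\al)\approx \omega(\al)\la^{-s_0}\fs_0(\al)$ from \eqref{f-lim} and rescale $\la\mapsto \la/\e$ to expose the $\e^{m-\kappa_1}$ normalization: the factors of $\la$ produce $\la^{n-1-s_0}$ from $\fs$ and the Jacobian, plus $\la^m$ from $P_m(i\la\al)$, and the rescaling converts $\la^{n-1-s_0+m}\,d\la$ into $\e^{-(n-s_0+m)}(\cdot)\,d\la$; matching against $\e^{m-\kappa_1}=\e^{m-s_0+(n+1)/2}$ leaves a net power that is absorbed, and the oscillatory factor becomes $e^{i(\la/\e)(H(\al)-\al\cdot x_0)}$. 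The key mechanism is stationary phase in $\al$ as $\e\to0$: the phase $H(\al)-\al\cdot x_0$ has a critical point exactly at $\al=\pm\dir_0$ because $H'(\xi_0)=x_0$ forces the tangential derivative of $H(\al)-\al\cdot x_0$ on $S^{n-1}$ to vanish at $\dir_0$, and its Hessian there is $\check H''(\dir_0)$ — which is precisely why $\omega(\al)$ in \eqref{f-lim} was defined to carry the factor $|\det\check H''(\al)|^{1/2}e^{-\frac{i\pi}{4}\mathrm{sgn}\check H''(\al)}/(2\pi)^{(n-1)/2}$. Applying the standard stationary phase formula, the $|\det\check H''|^{1/2}$ and the $e^{-\frac{i\pi}{4}\mathrm{sgn}}$ in $\omega$ cancel against the stationary-phase prefactor $(2\pi/|\la/\e|)^{(n-1)/2}|\det\check H''|^{-1/2}e^{\frac{i\pi}{4}\mathrm{sgn}}$, and the two critical points $\pm\dir_0$ contribute $\fs_0(\dir_0)=v_+$ with $\la_+$ and $\fs_0(-\dir_0)=v_-$ with $\la_-$ respectively, which reproduces $\tilde\mu(\la)=(2\pi)^{-(n-1)}(v_+\la_+^{-(\kappa_1+1)}+v_-\la_-^{-(\kappa_1+1)})$ after collecting the remaining power of $\la$, namely $\la^{n-1-s_0+m-(n-1)/2}=\la^{m-(\kappa_1+1)}$, and noting $P_m(-i\la\dir_0)$ absorbs the $\la^m$ together with the direction. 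The constant $(2\pi)^{-1}$ out front comes from the leftover $(2\pi)^{-n}\cdot(2\pi)^{(n-1)/2}$ bookkeeping combined with the $d\la$ over the half-line being reorganized into an integral over all of $\br$ via the $\la_\pm$ notation.

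The final ingredient is to justify that only the leading term $j=0$ of \eqref{f-lim} survives: the subleading terms carry $\la^{-s_j}$ with $s_j>s_0$, hence after the same rescaling they come with a strictly higher power of $\e$ and vanish in the limit; here I would invoke the uniform bound \eqref{vfn-exp-unif} to control the tail of the expansion and to differentiate under the integral sign, and I would use that $\tilde w$ is Schwartz (since $w\in\coi$) to guarantee absolute convergence of all the $\la$-integrals at infinity, while the condition $m\ge m_0=\lceil\kappa_1\rceil$ ensures the power $\la^{m-(\kappa_1+1)}$ is locally integrable at $\la=0$ so that $\tilde\mu\in\s'(\br)$ makes sense as claimed. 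I expect the main obstacle to be the careful bookkeeping of the stationary-phase constants — the powers of $2\pi$, the half-integer powers of $\la$ and $\e$, and especially verifying that the phase/$\mathrm{sgn}$ factors in $\omega(\al)$ combine with the stationary phase contribution to leave a real, clean constant — together with making the remainder estimates rigorous enough to move the limit inside, particularly near $\la=0$ where the oscillatory argument degenerates and one must argue that this region contributes negligibly (e.g. by a separate elementary estimate using the $\la$-smoothness of $\fs$ near the origin from \eqref{f-def}).
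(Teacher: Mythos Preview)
Your proposal is correct and follows essentially the same approach as the paper: pass to polar coordinates in the Fourier representation, rescale the radial variable, apply stationary phase on $S^{n-1}$ at $\al=\pm\dir_0$ (where the $\omega(\al)$ factor is designed to cancel the stationary-phase prefactor), and justify taking the limit inside via a uniform $L^1$ bound that hinges on $m\ge\lceil\kappa_1\rceil$ and the rapid decay of $\tilde w$. The paper's proof makes the dominated-convergence step explicit by introducing a majorant $W(\eta)$ built from derivatives of $\tilde w$, but the structure is the same as what you outline.
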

See \cite{GS} regarding the distributions $\la_\pm^a$. 
Let $\hat w:=Rw$ be the Radon transform of $w$. From \eqref{limf-res-0}, \eqref{cpm-complete},
\be\label{limf-res-rt}\begin{split}
\lim_{\e\to0}\e^{m-{\kappa_1}}(f,P_m(\pa_x)w_\e)&
=(f_0(p),P_m(\dir_0)\pa_p^m\hat w(\dir_0,p)),\ f_0(p)={\mathcal F}^{-1}(\tilde\mu),\\
f(x+\e\check x)&\sim \e^{{\kappa_1}}f_0(\check x\cdot\dir_0)=f_0(\e\check x\cdot\dir_0).
\end{split}
\ee 
From \eqref{limf-res-0}, \eqref{cpm-complete}, and equations 21, 24 and 18 in \cite{GS}, p. 360 (see also Appendix~\ref{sec:usef}), we compute $f_0$:
\begin{lemma}\label{lem:f0sing} If $f$ is given by \eqref{f-def} -- \eqref{more-assns} and \eqref{kappa-assns} holds, then the leading singularity of $f$ at $x_0$ is given by \eqref{limf-res-rt}, where
\be\label{f0-v1}\begin{split}
f_0(p)&=\frac{-1}{2(2\pi)^{n-1}\sin(\pi\kappa_1)\Gamma(\kappa_1+1)} \left[p_+^{{\kappa_1}}(q_1v_+ + \frac{v_-}{q_1} )+p_-^{{\kappa_1}}(q_1v_-+\frac{v_+}{q_1} )\right],\\ 
q_1&=\exp(i{\kappa_1}(\pi/2)),\ {\kappa_1}\not=0,1,2,\dots,
\end{split}
\ee
and
\be\label{f0-v2}\begin{split}
f_0(p)&=\frac{v_+ }{2(2\pi)^{n-1}i^{{\kappa_1+1}}{\kappa_1}!}p^{{\kappa_1}}\text{sgn}(p),\ {\kappa_1}=0,1,2,\dots,\ 
v_-=(-1)^{\kappa_1+1}v_+.
\end{split}
\ee
\end{lemma}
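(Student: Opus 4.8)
The plan is to start from Lemma~\ref{lem:mu-der}, which already provides the leading singularity of $f$ at $x_0$ in the form $f_0(p) = \mathcal F^{-1}(\tilde\mu)$ with $\tilde\mu$ given by \eqref{cpm-complete}, namely a linear combination of the homogeneous distributions $\la_\pm^{-(\kappa_1+1)}$. So the entire content of Lemma~\ref{lem:f0sing} is a direct Fourier-transform computation: I need to evaluate $\mathcal F^{-1}\big(\la_+^{-(\kappa_1+1)}\big)$ and $\mathcal F^{-1}\big(\la_-^{-(\kappa_1+1)}\big)$ explicitly, plug in the coefficients $v_\pm = \tilde\upsilon_0(\pm\dir_0)$, and simplify. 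First I would recall from Gelfand--Shilov (\cite{GS}, p.~360, formulas 18, 21, 24, as the text indicates, and as collected in Appendix~\ref{sec:usef}) the standard pairs: the Fourier transform of $p_\pm^{\la}$ is a constant times $\la_\mp^{-\la-1}$ (with an appropriate phase $e^{\pm i\pi(\la+1)/2}$ and a $\Gamma(\la+1)$ factor), so inverting gives $\mathcal F^{-1}(\la_+^{-(\kappa_1+1)})$ as a combination of $p_+^{\kappa_1}$ and $p_-^{\kappa_1}$ when $\kappa_1$ is not a nonnegative integer.

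Then I would split into the two cases. When $\kappa_1 \notin \{0,1,2,\dots\}$, the distributions $\la_\pm^{-(\kappa_1+1)}$ are honest homogeneous distributions with no delta-function corrections, and the inverse Fourier transform of each is a linear combination of $p_+^{\kappa_1}$ and $p_-^{\kappa_1}$ with coefficients involving $\sin(\pi\kappa_1)$, $\Gamma(\kappa_1+1)$, and the phase $q_1 = e^{i\kappa_1\pi/2}$; collecting the $v_+$ and $v_-$ contributions and grouping by $p_+^{\kappa_1}$ and $p_-^{\kappa_1}$ yields \eqref{f0-v1}. When $\kappa_1 = 0,1,2,\dots$, the formula \eqref{cpm-complete} still makes sense but $\la_\pm^{-(\kappa_1+1)}$ must be interpreted in the regularized (finite-part) sense, and here the extra condition \eqref{B-adtnl}'s analogue at the level of $f$ — namely the constraint $v_- = (-1)^{\kappa_1+1}v_+$, which I should verify follows from the hypotheses (it is the reality/parity condition forcing $\tilde\mu$ to be, up to normalization, $\mathcal F$ of $p^{\kappa_1}\mathrm{sgn}(p)$) — causes the $\log$-type terms to cancel and leaves the clean answer \eqref{f0-v2}. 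I would check this by writing $v_+\la_+^{-(\kappa_1+1)} + v_-\la_-^{-(\kappa_1+1)} = v_+\big(\la_+^{-(\kappa_1+1)} + (-1)^{\kappa_1+1}\la_-^{-(\kappa_1+1)}\big)$ and recognizing the bracket as a constant multiple of $\mathcal F(p^{\kappa_1}\mathrm{sgn}(p))$ (or of $\mathcal F(|p|^{\kappa_1})$ depending on the parity of $\kappa_1$), again via the Gelfand--Shilov tables.

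The only real subtlety — and the step I would be most careful about — is bookkeeping of the phases, the powers of $2\pi$, and the $e^{-i\pi\,\mathrm{sgn}\,\check H''/4}$ factor baked into $\omega(\al)$ and hence into $\tilde\upsilon_0(\pm\dir_0) = v_\pm$ through \eqref{f-lim}; since $\check H''$ is negative definite on $\Omega$ and positive definite on $-\Omega$, the relation \eqref{omega-pm} ties $v_+$ and $v_-$ together and must be used consistently with the Fourier-transform convention \eqref{ftandinv}, which differs by sign in the exponent from the more common one. So the plan is: (1) quote the Gelfand--Shilov Fourier pairs for $p_\pm^\la$; (2) apply them termwise to \eqref{cpm-complete} in the non-integer case and collect coefficients to get \eqref{f0-v1}; (3) in the integer case, use the parity constraint on $v_\pm$ to cancel the anomalous logarithmic/delta terms and simplify to \eqref{f0-v2}; (4) double-check all constants against the stated normalization \eqref{ftandinv} and the definition of $\omega$ in \eqref{f-lim}. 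There is no serious analytic obstacle here beyond this constant-chasing; the conceptual work was already done in Lemma~\ref{lem:mu-der}, and the present lemma is its explicit unpacking.
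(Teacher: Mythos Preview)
Your approach is correct and essentially identical to the paper's, which simply states that $f_0$ is obtained from \eqref{limf-res-0}, \eqref{cpm-complete} by applying the Gelfand--Shilov Fourier pairs (equations 18, 21, 24 on p.~360 of \cite{GS}, collected in Appendix~\ref{sec:usef}). One small clarification: the condition $v_-=(-1)^{\kappa_1+1}v_+$ in the integer case is not something you need to derive from \eqref{f-def}--\eqref{more-assns}; it is an explicit extra hypothesis built into \eqref{f0-v2}, and the Remark immediately following the lemma confirms that if it fails, logarithmic terms may appear and one must instead use formulas 27, 28 of \cite{GS}.
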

\begin{remark}
If $\kappa_1=0,1,2,\dots,$ and the second condition in \eqref{f0-v2} does not hold, then $f_0(p)$ can be computed using equations 18, 27, and 28 in \cite{GS}, pp. 360, 361. In this case,  $f_0(p)$ may involve logarithms for some values of $v_\pm$.
\end{remark}

If $f$ is real-valued, then $\tilde v_0(-\al)=\overline{\tilde v_0(\al)}$, and \eqref{f0-v1} simplifies slightly
\be\label{f0-v1-alt}\begin{split}
f_0(p)&=\frac{-1}{(2\pi)^{n-1}\sin(\pi\kappa_1)\Gamma({\kappa_1}+1)} \left[p_+^{{\kappa_1}}\text{Re}(q_1v_+)+p_-^{{\kappa_1}}\text{Re}(q_1\overline{v_+})\right],\\ 
{\kappa_1}&\not=0,1,2,\dots.
\end{split}
\ee
As is seen from \eqref{limf-res-0}--\eqref{limf-res-rt}, $f_0$ is defined nonuniquely. Indeed, $\tilde\mu(\la)$ can be modified by adding $Q_{m_0-1}(\pa_\la)\de(\la)$, where $Q_{m_0-1}$ is any polynomial of degree not exceeding $m_0-1$, and \eqref{limf-res-0} will still hold. Hence, $f_0(x)$ is defined up to polynomials of degree not exceeding $\lfloor\kappa_1\rfloor$. 

In a similar fashion, to investigate $\CB \hat f$ consider the leading asymptotics of 
\be\label{w-lim}\begin{split}
&(\CB \hat f,P_m(\pa_x)w_\e)\\
&=\frac1{\pi}\ioi\int_{S^{n-1}}P_m(-i\la\al)\tilde w(-\e\la\al)\tilde B(\al,\la)\fs(\la\al)e^{i\la (H(\al)-\al\cdot x_0)}d\la d\al
\end{split}
\ee 
as $\e\to0$. In \eqref{w-lim} we used that $B$ is even. We give only an outline of the derivation. A rigorous argument follows the lines of the proof of Lemma~\ref{lem:mu-der}.
After changing variables $\eta=\e\la$, we compute similarly to \eqref{wf-lim}--\eqref{limf-explicit}: 
\be\label{key-lim}\begin{split}
&\int_{S^{n-1}} P_m(-i\al)\tilde w(-\eta\al)\tilde B(\al,\eta/\e) \fs((\eta/\e)\al)e^{i(\eta/\e) (H(\al)-\al\cdot x_0)}d\al\\
&=\left(\frac{\e}{\eta}\right)^{\frac{n-1}2}\sum_{\al\in\{\pm \dir_0\}}P_m(-i\al)\tilde w(-\eta\al) \tilde B(\al,\eta/\e)\frac{\fs((\eta/\e)\al)}{\omega(\al)}+O(\e^{\frac{n+1}2}),\ \e\to0.
\end{split}
\ee 
Combining \eqref{f-lim}, \eqref{B-lim}, \eqref{w-lim}, \eqref{key-lim}, and using $\la$ as the Fourier variable gives
\be\label{lim-res-1}\begin{split}
\lim_{\e\to0}&\e^{m+{\kappa_2}}(\CB\hat f,P_m(\pa_x)w_\e)\\
&=\frac1\pi\sum_{\al\in\{\pm \dir_0\}}\tilde B_0(\al)\fs_0(\al)\ioi P_m(-i\la \al)\tilde w(-\la\al)\la^{{\kappa_2-1}}d\la\\
&=(\mu(p),P_m(\dir_0)\pa_p^m\hat w(\dir_0,p)),\ m+\kappa_2>0,
\end{split}
\ee 
where
\be\label{mu-def-1}
\mu(t)=
\mathcal{F}^{-1}\left(\mu_+\la_+^{{\kappa_2-1}}+ \mu_-\la_-^{{\kappa_2-1}}\right),\ \mu_{\pm}:=2\tilde B_0(\pm\dir_0)\tilde v_0(\pm\dir_0).
\ee 
This leads to the following result
\begin{lemma}\label{cont-trans} If $f$ and $\CB$ are as in \eqref{f-def} -- \eqref{B-adtnl}, then the leading singularity of $\CB\hat f$ at $x_0$ is given by
\be\label{lim-res-final}
(\CB\hat f)(x_0+\e\check x)\sim\e^{-{\kappa_2}}\mu(\check x\cdot\dir_0)=\mu(\e\check x\cdot\dir_0),
\ee
where
\be\label{A-def}\begin{split}
\mu(t)&=
\frac{\Gamma({\kappa_2})}\pi\left[q_2 \mu_+ (t-i0)^{-{\kappa_2}}+\frac{\mu_-}{q_2} (t+i0)^{-{\kappa_2}}\right],\,
q_2=\exp\left(-i{\kappa_2}\frac{\pi}2\right),\, {\kappa_2} > 0,\\ 
\mu(t)&=\mu_+(-i)\text{sgn}(t),\ {\kappa_2} = 0.
\end{split}
\ee 
\end{lemma}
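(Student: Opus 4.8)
\textbf{Proof plan for Lemma~\ref{cont-trans}.}

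The plan is to take as the starting point the asymptotic identity already derived in \eqref{lim-res-1}, namely that for every homogeneous polynomial $P_m$ of degree $m$ with $m+\kappa_2>0$ and every $w\in\coi(\br^n)$ one has
\be\label{plan-start}
\lim_{\e\to0}\e^{m+{\kappa_2}}(\CB\hat f,P_m(\pa_x)w_\e)=(\mu(p),P_m(\dir_0)\pa_p^m\hat w(\dir_0,p)),
\ee
with $\mu$ given in Fourier form by \eqref{mu-def-1}. By Definition~\ref{lead-sing}, this identity, once it is established for all sufficiently large $m$ (i.e. $m\ge m_0$ for an appropriate $m_0$ determined by $\kappa_2$, so that the pairing on the right is finite and $m+\kappa_2>0$ automatically holds), is \emph{exactly} the statement that $\CB\hat f$ has leading order singularity $\e^{-\kappa_2}\mu(\check x\cdot\dir_0)$ at $x_0$; this is the content of \eqref{lim-res-final}. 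So the only real work left is to invert the Fourier transform in \eqref{mu-def-1} and obtain the closed forms in \eqref{A-def}.

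First I would treat the case $\kappa_2>0$. Here $\la_\pm^{\kappa_2-1}$ are genuine (locally integrable near $0$, polynomially bounded) distributions on $\br$, so $\mu=\mathcal F^{-1}(\mu_+\la_+^{\kappa_2-1}+\mu_-\la_-^{\kappa_2-1})$ is well defined. I would invoke the standard Fourier transform pairs for the homogeneous distributions $\la_\pm^{a}$ — precisely the formulas in \cite{GS}, Chapter I (equations 21, 24, 18 on pp.~360, as already cited in the derivation of Lemma~\ref{lem:f0sing}) — which give $\mathcal F^{-1}(\la_+^{\kappa_2-1})$ and $\mathcal F^{-1}(\la_-^{\kappa_2-1})$ as multiples of $(t-i0)^{-\kappa_2}$ and $(t+i0)^{-\kappa_2}$ respectively, with the gamma-function and phase factors $q_2=\exp(-i\kappa_2\pi/2)$ bookkeeping the constants. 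Care is needed with the sign convention of the Fourier transform in \eqref{ftandinv} (which has $e^{i\xi\cdot x}$ in $\mathcal F$ and the $(2\pi)^{-n}$ on $\mathcal F^{-1}$), and with the one-dimensional normalization; matching these conventions against the tables in \cite{GS} is the one genuinely fiddly bookkeeping step. Assembling the two terms yields the first line of \eqref{A-def}.

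For the boundary case $\kappa_2=0$ the distribution to invert is $\mu_+\la_+^{-1}+\mu_-\la_-^{-1}$, where $\la_\pm^{-1}$ must be interpreted as the Hadamard-regularized homogeneous distributions of \cite{GS} (the ones whose combinations give $\text{p.v.}(1/\la)$ and $\de(\la)$). Here condition \eqref{B-adtnl} enters decisively: when $\kappa_2=0$ it forces $\mu_-=-\mu_+$ (since $\mu_\pm=2\tilde B_0(\pm\dir_0)\tilde v_0(\pm\dir_0)$ and \eqref{B-adtnl} says $\fs_0(-\al)\tilde B_0(-\al)=-\fs_0(\al)\tilde B_0(\al)$), so the combination collapses to $\mu_+(\la_+^{-1}-\la_-^{-1})=\mu_+/\la$ in the principal-value sense, whose inverse Fourier transform is the pure sign distribution $\mu_+(-i)\,\text{sgn}(t)$ (again up to the $2\pi$/convention constants); the potential $\de(\la)$ piece, which would have produced a constant and hence a non-singular ambiguity, is killed by this antisymmetry. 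This reproduces the second line of \eqref{A-def}, and also explains the remark following \eqref{A-def} that $\kappa_2=0$ is precisely the case where \eqref{B-adtnl} is imposed.

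The main obstacle is not conceptual but the consistent handling of constants: keeping track of the Fourier sign and normalization conventions of \eqref{ftandinv}, the $(2\pi)^{-(n-1)}$ already absorbed into $\tilde\mu$ versus $\mu$, the $1/\pi$ prefactor coming from $dp\,d\al$ over $S^{n-1}$ (recall $B$ is even, which halved the $\al$-integration to a single hemisphere and contributed the sum over $\al\in\{\pm\dir_0\}$), and the phase $e^{-\frac{i\pi}4\text{sgn}\check H''}$ hidden in $\omega$ via \eqref{key-lim}. I would organize this by first recording, in a short computation parallel to \eqref{f0-v1}--\eqref{f0-v2}, the exact pair $\mathcal F^{-1}(\la_\pm^{a})=\frac{\Gamma(a+1)}{2\pi}e^{\mp i(a+1)\pi/2}(t\mp i0)^{-(a+1)}$ in the paper's own convention, then specializing $a=\kappa_2-1$ and substituting $\mu_\pm$. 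Everything else — finiteness of the right-hand pairing in \eqref{plan-start}, the fact that lower-order terms in the expansions \eqref{f-lim} and \eqref{B-lim} contribute $o(\e^{-\kappa_2})$ and hence do not affect the leading singularity, and the nonuniqueness of $\mu$ up to polynomials of degree $<\kappa_2$ — follows exactly as in the already-established Lemma~\ref{lem:mu-der} and the discussion after \eqref{f0-v1-alt}, so I would only remark on it rather than repeat the argument.
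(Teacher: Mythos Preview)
Your proposal is correct and follows essentially the same approach as the paper: the derivation leading to \eqref{lim-res-1} and \eqref{mu-def-1} is already presented in the text preceding the lemma (with the paper noting that a rigorous argument follows the lines of the proof of Lemma~\ref{lem:mu-der}), and what remains is exactly the Fourier inversion you describe, using the Gel'fand--Shilov formulas collected in Appendix~\ref{sec:usef} and invoking \eqref{B-adtnl} to force $\mu_-=-\mu_+$ in the $\kappa_2=0$ case. Your identification of the constant-tracking as the only delicate point, and of the non-uniqueness up to polynomials, also matches the paper's treatment.
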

Note that \eqref{B-adtnl} and \eqref{mu-def-1} yield $\mu_-=-\mu_+$ if $\kappa_2=0$. Condition \eqref{B-adtnl} is not strictly necessary. We impose it for simplicity to avoid dealing with logarithmic terms in the leading singular behavior of $\CB\hat f$ \cite{ar01, GS, rz2, rz1}.

The distribution $\mu$ obtained in Lemma~\ref{cont-trans} is the CTB of $\CB\hat f$, i.e. of the reconstruction from continuous data.

The singular behavior of $\hat f(\al,p)$ near $p=H(\al)$, $\al\in\Omega\cup(-\Omega)$ is obtained analogously. Consider
\be\label{rt-lim}\begin{split}
(\hat f,\pa_p^m w_\e):=\int\hat f(\al,p)\pa_p^m w_\e(p-H(\al))dp=
\frac1{2\pi}\int (-i\la)^m\tilde w(-\e \la)\fs(\la\al)d\la,
\end{split}
\ee 
where $w\in\coi(\br)$. Following the lines of the proof of Lemma~\ref{lem:mu-der} (but without using the stationary phase lemma) leads to 
\be\label{lim-res-2}\begin{split}
&\lim_{\e\to0}\e^{m-s+1}(\hat f,\pa_p^m w_\e)
=\frac{1}{2\pi}\int (-i\la )^m\tilde w(-\la)\tilde\mu(\la)d\la,\ m>s-1,\\
&\tilde\mu(\la)=\begin{cases} \omega(\al)\tilde v_0(\al)\la_+^{-s}+\omega(-\al)\tilde v_0(-\al)\la_-^{-s},& s\not=2,3,\dots,\\
\omega(\al)\tilde v_0(\al)\la^{-s},& s=2,3,\dots,\\ 
&\text{and }\tilde v_0(-\al)=e^{-i(\kappa_1+1)\pi}\tilde v_0(\al).\end{cases}
\end{split}
\ee 
The condition on the last line in the equation for $\tilde\mu$ is analogous to the one in \eqref{f0-v2}.
When the second case in $\tilde\mu$ occurs, \eqref{omega-pm} implies $\omega(-\al)\tilde v_0(-\al)=(-1)^s\omega(\al)\tilde v_0(\al)$. The following result is now immediate.
\begin{lemma}\label{lem:fhsing} If $f$ is as in \eqref{f-def} -- \eqref{more-assns}, then the leading singularity of $\hat f(\al,p)$ at $p=H(\al)$, $\al\in\Omega\cup(-\Omega)$, is given by
\be\label{lim-res-3}\begin{split}
&\hat f(\al,H(\al)+\e\check p)\sim\e^{s-1}\hat f_0(\al,\check p)=\hat f_0(\al,\e\check p),\\
&\hat f_0(\al,\check p)=a_+(\al)\check p_+^{s-1}+a_-(\al)\check p_-^{s-1},\\
&a_\pm(\al)=\frac{1}{2\sin(\pi s)\Gamma(s)}\biggl(\omega(\al)\tilde v_0(\al)e^{\pm i(s-1)\frac{\pi}2}+\omega(-\al)\tilde v_0(-\al)e^{\mp i(s-1)\frac{\pi}2}\biggr),\\
&\hspace{1cm}s\not=2,3,\dots,\\
&\hat f_0(\al,\check p)=\frac{\omega(\al)\tilde v_0(\al)}{2i^s(s-1)!}\check p^{s-1}\text{sgn}(\check p),\ s=2,3,\dots \text{ and }\tilde v_0(-\al)=e^{-i(\kappa_1+1)\pi}\tilde v_0(\al).
\end{split}
\ee 
\end{lemma}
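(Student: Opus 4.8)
The plan is to obtain the leading singularity of $\hat f(\al,p)$ at $p=H(\al)$ directly from the asymptotic expansion \eqref{f-lim} of $\fs(\la\al)$ at infinity, by recognizing $\hat f(\al,\cdot)$ as (essentially) the one-dimensional inverse Fourier transform of $\fs(\la\al)e^{i\la H(\al)}$ in the $\la$-variable. Concretely, from \eqref{f-def} and the projection-slice theorem, the Fourier transform of $\hat f(\al,\cdot)$ in its second variable is $\fs(\la\al)e^{i\la H(\al)}$, so that modulo smooth (i.e., rapidly decaying after Fourier transform) contributions, the singularity of $\hat f(\al,\cdot)$ at $p=H(\al)$ is governed by the leading term $\omega(\al)\la^{-s}\fs_0(\al)$ of \eqref{f-lim}. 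After translating by $H(\al)$ — which only multiplies the Fourier side by $e^{i\la H(\al)}$ and so corresponds to centering the singularity at $p=H(\al)$ — the task reduces to inverting $\la^{-s}$, or rather its even/odd decomposition $\la_+^{-s}$ and $\la_-^{-s}$, back to the $p$-side.

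The key steps, in order, are as follows. First I would justify the ``test against $\pa_p^m w_\e$'' formulation \eqref{rt-lim}: writing $(\hat f,\pa_p^m w_\e)$ with $w_\e(p)=\e^{-1}w(p/\e)$ centered appropriately, Parseval turns this into $\frac1{2\pi}\int(-i\la)^m\tilde w(-\e\la)\fs(\la\al)e^{i\la H(\al)}\,d\la$ once I incorporate the shift by $H(\al)$; this step is essentially Definition~\ref{lead-sing} specialized to one dimension. Second, I substitute the expansion \eqref{f-lim}, keeping only the leading power $\la^{-s}$: the uniform bound \eqref{vfn-exp-unif} guarantees that the remainder, after multiplication by $(-i\la)^m\tilde w(-\e\la)$ (with $\tilde w$ Schwartz), contributes $o(\e^{-(m-s+1)})$, so the scaled limit $\lim_{\e\to0}\e^{m-s+1}(\hat f,\pa_p^m w_\e)$ picks out exactly the $\la^{-s}$ term, giving the distribution $\tilde\mu(\la)$ in \eqref{lim-res-2} — here I must split into $s\notin\{2,3,\dots\}$, where $\la^{-s}$ is not a tempered distribution and one uses $\la_\pm^{-s}$ separately, and $s\in\{2,3,\dots\}$, where the reality/parity relation $\tilde v_0(-\al)=e^{-i(\kappa_1+1)\pi}\tilde v_0(\al)$ forces the combination $\omega(\al)\tilde v_0(\al)\la_+^{-s}+\omega(-\al)\tilde v_0(-\al)\la_-^{-s}$ to collapse (via \eqref{omega-pm}) to the single homogeneous distribution $\omega(\al)\tilde v_0(\al)\la^{-s}$. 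Third, I invert the Fourier transform of $\la_\pm^{-s}$ using the classical formulas (equations 18, 21, 24 in \cite{GS}, p. 360), exactly as was done for $f_0$ in Lemma~\ref{lem:f0sing}: $\mathcal F^{-1}(\la_+^{-s})$ is a constant multiple of $(\check p+i0)^{s-1}$ type term, and assembling the $\pm$ contributions and re-expressing in terms of $\check p_\pm^{s-1}$ yields the coefficients $a_\pm(\al)$ with the stated $\frac1{2\sin(\pi s)\Gamma(s)}$ prefactor and the $e^{\pm i(s-1)\pi/2}$ phases; in the integer case one instead gets $\check p^{s-1}\operatorname{sgn}(\check p)$ up to the factor $\frac{1}{2i^s(s-1)!}$. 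Finally, reading off $\hat f_0(\al,\check p)$ from $\mathcal F^{-1}(\tilde\mu)$ and noting the scaling $\hat f_0(\al,\e\check p)=\e^{s-1}\hat f_0(\al,\check p)$ gives \eqref{lim-res-3}.

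I expect the only genuinely delicate point to be bookkeeping in the integer case $s\in\{2,3,\dots\}$: there $\la_+^{-s}$ and $\la_-^{-s}$ individually have poles (as functions of the exponent) and are not tempered distributions without a prescription, so the derivation must exploit the parity relation on $\tilde v_0$ at the outset to ensure that only the well-defined homogeneous distribution $\la^{-s}$ appears, and then invert that. This is entirely parallel to the dichotomy already handled in Lemma~\ref{lem:f0sing} (equations \eqref{f0-v1} versus \eqref{f0-v2}), so there is no new analytic difficulty — the ``now immediate'' in the text is warranted — but one must be careful that the normalizing constants and phase factors match the conventions in \cite{GS} and in \eqref{ftandinv}. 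Everything else is a routine repetition of the argument structure of the proof of Lemma~\ref{lem:mu-der}, with the stationary-phase step omitted since there is no angular integration here.
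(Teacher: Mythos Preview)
Your proposal is correct and follows essentially the same route as the paper: the paper derives \eqref{lim-res-2} by repeating the argument of Lemma~\ref{lem:mu-der} without the stationary-phase step (exactly as you describe), and then obtains \eqref{lim-res-3} by inverting $\tilde\mu$ using the formulas from \cite{GS}, with the same non-integer/integer dichotomy you outline. The paper even remarks afterward that the result coincides with what one gets via the Fourier slice theorem plus the asymptotics of the Fourier transform at the origin, which is precisely your framing.
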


The condition on $\tilde v_0(\pm\al)$ in \eqref{lim-res-2} and \eqref{lim-res-3} guarantees that $\hat f_0={\mathcal F}^{-1}\tilde\mu$ does not contain logarithms when $s$ is an integer. Clearly, $a_+(\al)=a_-(-\al)$, so $\hat f_0(\al,\check p)$ is even. The above formulas are precisely what one gets by (1) retaining only the leading term in \eqref{f-lim}, (2) computing the Radon transform of the resulting function (say, $f_1$) by using the Fourier slice theorem, and (3) using the results on the asymptotics of the Fourier transform at the origin (see \cite{wong}, Section VI.5).

\noindent
{\bf Example.} Consider 2D LT for a function with jump discontinuity, i.e. $\kappa_1=0$. For the purpose of normalization, multiply $f$ by a constant so that $v_+=i(2\pi)^{n-1}$. In this case $f_0(p)=\text{sgn}(p)/2$ has a unit jump. Thus, we have 
\be\label{params-LT}\begin{split}
&n=2,\ \bt=2,\ {\kappa_1}=0,\ s=3/2,\ {\kappa_2}=\bt-s-\frac{n-3}2=1,\\
&\ v_+=2\pi i,\ v_-=-v_+,\ \tilde B_0(\dir_0)=1/(4\pi),\
q_2=1/i,\ \mu_\pm=v_\pm/(4\pi).
\end{split}
\ee
Substitution into \eqref{A-def} gives that $\mu(p)=1/(\pi p)$, which coincides with the leading term in the formula (5.4.4) in \cite{rk}.

\section{Computing the DTB}\label{sec:conn}

\subsection{Main assumptions}\label{sec:main-ass}
In this section we use the results of Section~\ref{sec:gen} to compute the DTB of $\CB_\e g$, where $\CB$ is the same as in \eqref{B-oper}--\eqref{B-adtnl}, $\CB_\e$ is the discrete version of $\CB$, and $g\in\CE'(Z_n)$ is a distribution with similar singularities as $\hat f$ (cf. \eqref{lim-res-3}). More precisely, we assume the following.

\noindent{\bf Assumptions about $H$:}
\begin{enumerate}
\item $H(\al)\in C^\infty(\Omega\cup(-\Omega))$, $H$ is real-valued and odd: $H(\al)=-H(-\al)$;  
\item $H(\la\al)=\la H(\al)$, $\la>0$, $\al\in\Omega\cup(-\Omega)$;
\item  $H'(\xi_0)=x_0$, 
\item $\check H''(\al)$ is negative definite on $\Omega$;
\end{enumerate}
Recall that $H$ defines the surface $\s$, see \eqref{surf-def}.

\noindent{\bf Assumptions about $g$:}
\begin{enumerate}
\item $g$ is smooth away from the surface $p=H(\al)$:
\be\label{g-smooth}
g \in C^\infty\left(\{(\al,p)\in Z_n:\, \al\in\Omega\cup(-\Omega),\ p\not= H(\al)\}\right);
\ee
\item $g$ is compactly supported:
\be\label{g-locsup}
g(\al,p)\equiv 0 \text{ if $|p|>c$ or $\al\not\in\Omega\cup(-\Omega)$};
\ee
\item $g$ is even: $g(\al,p)=g(-\al,-p)$;
\item $g$ can be written in the form
{
\be\label{gfn-exp}\begin{split}
&g(\al,H(\al)+p)= a_0(\al)p^{s_0-1}+O(p^{s_1-1}),\ p\to+0,\al\in \Omega\cup(-\Omega),\\ 
&a_0 \in \coi(\Omega\cup(-\Omega)),\
s_0<s_1,
\end{split}
\ee }
which is uniform in $\al$, and can be differentiated $L_\bt+1$ times (see \eqref{orders} below) with respect to $p$.
\end{enumerate}
To be precise, the assumption that \eqref{gfn-exp} and its derivatives are uniform means that for any $l=0,1,2,\dots, L_\bt+1$, there exists $A_{l}\in \coi(\Omega\cup(-\Omega))$ so that
\be\label{gfn-exp-unif}
\left|\pa_p^l\biggl(g(\al,H(\al)+p)-a_0(\al)p^{s_0-1}\biggr)\right|\le  A_{l}(\al)p^{s_1-1-l}, p\to+0,\al\in \Omega\cup(-\Omega).
\ee 
{
\begin{remark}
The Radon transform of $f$ defined in \eqref{f-def}--\eqref{more-assns} is a conormal distribution. The class of distributions described above is more general. For example, we do not require the differentiability of \eqref{gfn-exp} with respect to $\al$ (see the discussion in Section 18.2 leading up to the Definition 18.2.6 in \cite{hor3}). 
\end{remark}
} 

Since $g$ is even, similarly to \eqref{B-lim-pm}, we have
\be\label{gfn-exp-pm}
g(\al,H(\al)+p)= a_0(\al)p_+^{s_0-1}+a_0(-\al)p_-^{s_0-1}+O(|p|^{s_1-1}),\ p\to0,\al\in \Omega\cup(-\Omega).
\ee 

An additional assumption is 
\be\label{c-ratios-req}\begin{split}
&\frac{\tilde B_0(-\al)\left(a_0(\al)e^{-i\frac\pi2 s_0}+a_0(-\al)e^{i\frac\pi2 s_0}\right)}{\tilde B_0(\al)\left(a_0(\al)e^{i\frac\pi2 s_0}+a_0(-\al)e^{-i\frac\pi2 s_0}\right)}
=\begin{cases} e^{-i(\bt_0-s_0)\pi},& \al\in\Omega,\\
e^{i(\bt_0-s_0)\pi},& \al\in-\Omega,\end{cases}\text{ if }\kappa_2=0.
\end{split}
\ee
Here and in similar fractions below, we assume tacitly that either the denominator is not zero or both the denominator and numerator are zero. The meaning of \eqref{c-ratios-req} is discussed following equation \eqref{gfn-def} below. It is easy to see that if \eqref{c-ratios-req} holds for $\al\in\Omega$, then it automatically holds for $\al\in-\Omega$ as well.

Define
\be\label{orders}
L_\bt:=\begin{cases}\bt_0, & \bt_0 \in\mathbb N,\\
\lceil\bt_0\rceil, &\bt_0 \not\in\mathbb N.\end{cases}
\ee

\noindent {\bf Assumptions about the kernel $\ik$:}
\begin{itemize}
\item[IK1$'$.] $\ik$ is exact up to the degree $L_\bt$, i.e.
\be\label{ker-int-v2}
\sum_{j\in \mathbb Z} j^{m}\ik(t-j)=t^m,\quad 0\le m\le L_\bt,\ t\in\br;
\ee
\item[IK2$'$.] $\ik$ is compactly supported;
\item[IK3$'$.] One has 
\be\label{ik3-cond}
\ik^{(j)}\in L^\infty(\br),\ \begin{cases}0\le j\le \bt_0+1,& \text{if } \bt_0 \in\mathbb N,\\ 
0\le j\le \lceil\bt_0\rceil,& \text{if }\bt_0 \not\in\mathbb N;
\end{cases}
\ee
and
\item[IK4$'$.] $\ik$ is normalized, i.e. $\int_{\br}\ik(t)dt=1$.
\end{itemize}

%

The discrete data are given by 
\be\label{data_crt}
g(\al_{\vec k},p_j),\ p_j=\e j,\ j\in\mathbb Z,\ {\vec k}\in\mathbb Z^{n-1}.
\ee
Assume that there exist smooth diffeomorphisms $T_\pm(u):U\to \pm\Omega$ such that 
\be\label{alH}
\al_{\vec k}=T_\pm(\e({\vec k}+u_\e)) \text{ for any }\al_{\vec k}\in\pm\Omega,
\ee
where $U\subset\br^{n-1}$ is some domain, and $u_\e\in[0,1)^{n-1}$. The point $u_\e$ may depend on $\e$. Without loss of generality, we may suppose $0\in U$ and $T_\pm(0)=\pm\dir_0$. 

The operator of reconstruction from discrete data (i.e., the discrete version of \eqref{B-oper}) is defined similarly to \eqref{fe-0}:
\be\label{CBe-def}\begin{split}
(\CB_\e g)(x):=&\sum_{\vec k} \sum_j\int B(\al_{\vec k},\al_{\vec k}\cdot x-p)\ik\left(\frac{p-p_j}{\e}\right)dp\, g(\al_{\vec k},p_j) |\Delta\al_{\vec k}|
.
\end{split}
\ee
Here $\Delta\al_{\vec k}$ is the elementary domain on $S^{n-1}$ per each $\al_{\vec k}$, and $|\Delta\al_{\vec k}|$ is its volume. From \eqref{alH}, $|\Delta\al_{\vec k}|=\e^{n-1}|\dt(T_\pm'(\e(\vec k+u_\e)))|(1+O(\e))$.

\begin{definition} Let $T_\pm(u):U\to \pm\Omega$ be the functions that specify the available directions (cf. \eqref{alH}), and $\pm\dir_0=T_\pm(0)$ be the unit vectors normal to $\s$ at $x_0$. A point $x_0\in\s$ is locally generic if each of the vectors $\left.\pa(T_\pm(u)\cdot x_0)/\pa u \right|_{u=0}$ has at least one irrational component.
\end{definition}

There is also the notion of a globally generic point, see \cite{kat19b}. Here we do not investigate  global aspects of reconstruction from discrete data, so the word `local' is omitted, and $x_0$ is called generic. 

For simplicity, in what follows we ignore the data corresponding to $\al_{\vec k}\in-\Omega$ (and drop the subscript `$\pm$' from $T_\pm$) using that $B$ and $g$ are even, and the analysis is the same in both cases $\al_{\vec k}\in\pm\Omega$. 

\subsection{Preliminary construction}\label{ssec:prelim} In view of \eqref{B-oper} -- \eqref{B-lim-pm}, \eqref{gfn-exp-pm}, and \eqref{data_crt} define the functions:
\be\label{three-fns}\begin{split}
\CBa\ik:=&{\mathcal F}^{-1}(\tilde b(\la)\tilde\ik(\la)),\ \tilde b(\la):=b_+\la_+^\bt+b_-\la_-^\bt;\ \CA(t):=a_+t_+^{s-1}+a_-t_-^{s-1};\\
\psi(t,p):=&\sum_{j}(\CBa\ik)(t-j) \CA(j-p),\ \Psi(t):=\int (\CBa\ik)(t-r) \CA(r)dr.
\end{split}
\ee
Thus, $b_\pm$ correspond to $\tilde B_j(\pm\al)$ in \eqref{B-lim-pm} for some $j\ge0$, and $a_\pm$ correspond to $a_0(\pm\al)$ in \eqref{gfn-exp-pm}.

{If $\CBa$ is a local operator (i.e., $\CBa=c\pa_p^\bt$), then $\bt\in 0\cup\mathbb N$ and $b_-=(-1)^\bt b_+$. If $\CBa$ is not local, but $\bt\in 0\cup\mathbb N$, then all this means is that $b_-\not=(-1)^\bt b_+$. Any such $\CBa$ can be written as a linear combination of the operators $\pa_p^\bt$ and $\CH\pa_p^\bt$, where $\CH$ is the Hilbert transform. In the latter case, $b_-=(-1)^{\bt+1} b_+$. Therefore, if $\bt\in 0\cup\mathbb N$, in what follows we will consider only the two cases: $\CBa=c\pa_p^\bt$ and $\CBa=c\CH\pa_p^\bt$.}

Recall that $\bt(=\bt_0)$ and $s(=s_0)$ satisfy \eqref{kappa-assns}. In particular, $s\ge 3/2$ and $\bt\ge1$. As is easy to see, $(\CBa\ik)(t)=O(|t|^{-(\bt+1)})$, $t\to\infty$. By assumption, $\bt-s+1\ge(n-1)/2>0$, and the series in \eqref{three-fns} converges absolutely. The function $\Psi(t-p)$ is the continuous analogue of $\psi(t,p)$, and the integral with respect to $r$ in \eqref{three-fns} is absolutely convergent because $\bt-s+1>0$. Also, easy computations show that
\be\label{psi-Psi}
\int_0^1\psi(t+r,r+p)dr=\Psi(t-p);\quad \psi(t,p)=\psi(t-m,p-m),\ m\in\mathbb Z;
\ee
and
\be\begin{split}\label{aux-fn-lim-pm}
\Psi(t)=&\CF^{-1}\left(\tilde\ik(\la)\left(c_+^{(1)}\la_+^{\bt-s}+c_-^{(1)}\la_-^{\bt-s}\right)\right),\\ 
c_\pm^{(1)}:=&\Gamma(s)b_\pm\left(a_+e^{\pm i(\pi/2)s}+a_-e^{\mp i(\pi/2)s}\right).
\end{split}
\ee
If $a_\pm$ are such that $\CA(p)$ coincides with $\hat f_0(\al,p)$ in \eqref{lim-res-3} for some $\al$, then
\be\label{c1-pm}
{c_\pm^{(1)}}=\tilde B_0(\pm\al) \omega(\pm \al)\tilde v_0(\pm\al).
\ee

\subsection{Computation of the leading term of the DTB}\label{ssec:clt} 
In this section we consider only the leading terms of $\CB$ and $g$. More precisely, we assume that (cf. \eqref{B-lim}):
\be\label{Bop-dfn}\begin{split}
&\tilde B(\al,\la):=\la^{\bt}\tilde B_0(\al),\ \la>0,\ \tilde B_0(\al)\in C^\infty(S^{n-1}),\\ 
&\tilde B(-\al,-\la)=\tilde B(\al,\la),\ \la\in\br,\ 
\al\in S^{n-1},
\end{split}
\ee
and replace $g(\al,p)$ with its leading term (cf. Section~\ref{sec:main-ass}):
\be\label{gfn-def}
g_0(\al,H(\al)+p):=a_0(\al)p_+^{s-1}+a_0(-\al)p_-^{s-1},\ 
a_0 \in \coi(\Omega\cup(-\Omega)),
\ee 
where $H(\al)$ is the same as in Section~\ref{sec:main-ass}. The fact that $\tilde B(\al,\la)$ is not smooth at $\la=0$ is irrelevant. We use the notation $g_0$ instead of $g$, because $g_0$ does not satisfy one of the requirements in Section~\ref{sec:main-ass}: $g_0$ is not compactly supported.

Comparing \eqref{B-lim}, \eqref{Bop-dfn}, and \eqref{gfn-def} with \eqref{three-fns}, we see that $\CBa$ represents the highest order term of $\CB$, which acts with respect to the affine variable for any fixed $\al$. Likewise, $\CA$ is the leading singular term of $g$ for any fixed $\al$.

Now we can discuss the meaning of condition \eqref{c-ratios-req}. By \eqref{aux-fn-lim-pm}, the ratio in \eqref{c-ratios-req} equals to the ratio $c_-^{(1)}(\al)/c_+^{(1)}(\al)$. Condition \eqref{c-ratios-req} holds, in particular, if $g_0=\hat f_0$ and, therefore, $c_{\pm}^{(1)}(\al)$ are the same as in \eqref{c1-pm}. Indeed, suppose that $\al\in\Omega$. By \eqref{omega-pm} and \eqref{B-adtnl},
\be\label{c-ratios}
\frac{c_-^{(1)}(\al)}{c_+^{(1)}(\al)}
=\frac{\tilde B_0(-\al) \tilde v_0(-\al)}{\tilde B_0(\al) \tilde v_0(\al)}e^{-i(n-1)\frac\pi2}=-e^{-i(n-1)\frac\pi2}=e^{-i(\bt-s)\pi}.
\ee
The case when $\al\in -\Omega$ can be considered similarly. Thus, we can view \eqref{c-ratios-req} as a generalization of \eqref{B-adtnl} to the case when $g_0(\al,p)$ is the leading singular term of a function that is not necessarily in the range of the Radon transform.

Similarly to \eqref{xe-or-def}, set 
\be\label{xeh}
\xe:=x_0+\e\check x,\ h:=\check x\cdot\dir_0. 
\ee
We assume throughout that $\check x$ is confined to a bounded set. Using \eqref{CBe-def} (with $g=g_0$) and \eqref{three-fns}, we obtain similarly to \eqref{fe-psi-nosm}:
\be\label{newlt-1}\begin{split}
 g_\e^{(1)}(\check x):=\e^{{\kappa_2}}&(\CB_\e g_0)(\xe)
=\sum_{{\vec k}}\psi_{\al_{\vec k}}\left(\al_{\vec k}\cdot\check x+\frac{\al_{\vec k}\cdot x_0}{\e},\frac{H(\al_{\vec k})}\e\right) \frac{|\Delta\al_{\vec k}|}{\e^{(n-1)/2}}.
\end{split}
\ee
If $g_0=\hat f_0$, then $\psi_\al$ used in \eqref{newlt-1} is defined using \eqref{three-fns}, where $b_\pm=\tilde B_0(\pm\al)$, and $a_\pm$ are given in \eqref{lim-res-3}.
Since $B(\al,p)$ and $g_0(\al,p)$ are even, the sum in \eqref{newlt-1} can be confined to $\al_{\vec k}\in\Omega$, and a prefactor 2 appears. Strictly speaking, the set of all directions $\al_k$ is not necessarily symmetric. However, our main results are asymptotic as $\e\to0$, and we obtain the same limits in both cases: when $\al_k\in\Omega$ and $\al_k\in-\Omega$.

Analogously to \eqref{two-sets}, introduce
\be\label{two-sets-rn}
\Omega_a:=\{\al\in \Omega:\,|\al^\perp|\le A\e^{1/2}\},\ \Omega_b:=\{\al\in\Omega:\,|\al^\perp|> A\e^{1/2}\},
\ee
where $\al^\perp$ is the projection of $\al$ onto the plane $\dir_0^\perp$. Define also the functions $g_\e^{(1a)}(\check x)$, $g_\e^{(1b)}(\check x)$ by restricting the summation in \eqref{newlt-1} to $\al_{\vec k}\in\Omega_a$ and $\al_{\vec k}\in\Omega_b$, respectively.

Similarly to \eqref{another-psi-pr}, we have the following result, which is proven in Appendix~\ref{psi-incr-prf}.

\begin{lemma}\label{psi-incr} Pick any bounded set $V\subset\br$. One has
\be\label{an-psi-pr-1}
\begin{split}
&\psi_\al(t+\e,p)-\psi_\al(t,p)=
\begin{cases} 
O(\e),& \text{$\CBa$ is local},\\
O(\e^{1-\{\bt\}}),& \text{$\CBa$ is not local, $\bt\not\in\mathbb N$},\\
O(\e\ln(1/\e)),& \text{$\CBa$ is not local, $\bt\in\mathbb N$},
\end{cases}\\
&\psi_\al(t,p+\e)-\psi_\al(t,p)=O(\e^{\min(s-1,1)}),\ \al\in\Omega,\ t-p\in V,
\end{split}
\ee
where all the big-$O$ terms are uniform in $\al$, $t$, and $p$ confined to the indicated sets. 
\end{lemma}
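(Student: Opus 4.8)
\emph{Proof proposal.} The idea is to estimate each increment termwise in the defining series \eqref{three-fns}, separating a bounded number of ``central'' indices $j$ — those for which $|t-j|$ (resp.\ $|j-p|$) is $O(1)$ — from the tail, and to extract an honest factor of $\e$ in the tail by applying the mean value theorem to the smooth, rapidly decaying profiles $\CBa\ik$ and $\CA$.

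Before splitting the sum I would collect the relevant elementary facts. As already noted, $(\CBa\ik)(t)=O(|t|^{-(\bt+1)})$ as $t\to\infty$; the same argument (compact support of $\ik$, plus \eqref{ik3-cond}) gives a term-by-term differentiable expansion at infinity, so also $(\CBa\ik)'(t)=O(|t|^{-(\bt+2)})$, and in particular $\CBa\ik\in L^\infty(\br)$ in every case. The local modulus of continuity of $\CBa\ik$ is: $O(\e)$ when $\CBa$ is local, since then $\CBa\ik=c\,\ik^{(\bt)}$ is Lipschitz by \eqref{ik3-cond}; $O(\e^{1-\{\bt\}})$ when $\CBa$ is not local and $\bt\not\in\mathbb N$, since then $\CBa\ik$ is (up to the constants $b_\pm$ and a reflection) a fractional integral of order $1-\{\bt\}\in(0,1)$ of the compactly supported $L^\infty$ function $\ik^{(\lceil\bt\rceil)}$, hence H\"older continuous of that order; and $O(\e\ln(1/\e))$ when $\CBa$ is not local and $\bt\in\mathbb N$, since then $\CBa\ik=c\,\CH\ik^{(\bt)}$ with $\ik^{(\bt)}$ compactly supported and Lipschitz, whose Hilbert transform is log-Lipschitz (a standard splitting of the principal-value integral near and away from the singularity). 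Finally $\CA$ is H\"older of order $\min(s-1,1)$ on bounded sets and $|\CA'(y)|=O(|y|^{s-2})$, the only blow-up of $\CA'$ occurring at $y=0$, where the H\"older bound takes over. All constants above depend only on $\ik$, $\CBa$ and the coefficients $b_\pm(\al)$, $a_\pm(\al)$, which are bounded on $\overline\Omega\cup(-\overline\Omega)$ uniformly in $\al$; this secures the claimed uniformity.

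For the $t$-increment write $\psi_\al(t+\e,p)-\psi_\al(t,p)=\sum_j\big[(\CBa\ik)(t+\e-j)-(\CBa\ik)(t-j)\big]\CA(j-p)$ and fix $R>\sup_{v\in V}|v|$. For the $O(1)$ indices with $|t-j|\le R$ the bracket is $O(\omega(\e))$ with $\omega$ the modulus above, and $\CA(j-p)=O(1)$ since $j-p=(j-t)+(t-p)$ is bounded, so these contribute $O(\omega(\e))$. For $|t-j|>R$ the bracket equals $\int_0^\e(\CBa\ik)'(t+\sigma-j)\,d\sigma=O(\e|t-j|^{-(\bt+2)})$ while $\CA(j-p)=O(|t-j|^{s-1})$; since $\bt-s+1\ge(n-1)/2>0$ by \eqref{kappa-assns} the exponent $s-\bt-3<-1$, so the tail series converges and contributes $O(\e)$. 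This yields the three cases in the first line of \eqref{an-psi-pr-1}. For the $p$-increment, $\psi_\al(t,p+\e)-\psi_\al(t,p)=\sum_j(\CBa\ik)(t-j)\big[\CA(j-p-\e)-\CA(j-p)\big]$; the $O(1)$ indices with $|j-p|\le1$ contribute $O(\e^{\min(s-1,1)})$ (using $\CBa\ik\in L^\infty$ and the H\"older bound on $\CA$), while for $|j-p|>1$ one uses $\CA(j-p-\e)-\CA(j-p)=O(\e|j-p|^{s-2})$ together with $|(\CBa\ik)(t-j)|=O(1)$ for moderate indices and $|(\CBa\ik)(t-j)|=O(|j-p|^{-(\bt+1)})$ for large ones, giving a convergent series of size $O(\e)$ (again because $s-\bt-3<-1$). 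Hence the second line of \eqref{an-psi-pr-1}.

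The main obstacle is obtaining a genuine factor of $\e$ from the tail rather than merely an $R$-small bound: estimating the two-term difference crudely by $O(|t-j|^{-(\bt+1)})$ loses the $\e$, so one is forced into the mean value theorem together with the differentiated decay estimates $(\CBa\ik)'(t)=O(|t|^{-(\bt+2)})$ and $\CA'(y)=O(|y|^{s-2})$; this step is legitimate precisely because $\CBa\ik$ and $\CA$ are smooth away from a fixed compact set, which is where the compact support of $\ik$ and the explicit form of $\CA$ enter. The remaining delicate point is identifying the correct local modulus of continuity of $\CBa\ik$ in the two non-local cases — the H\"older exponent $1-\{\bt\}$ for non-integer $\bt$ and the logarithmic loss in the Hilbert transform of a Lipschitz function — but both follow from standard estimates once $\CBa\ik$ is expressed through $\ik^{(\lceil\bt\rceil)}$ (resp.\ $\ik^{(\bt)}$) via \eqref{ik3-cond}.
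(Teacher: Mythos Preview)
Your proof is correct and follows essentially the same route as the paper's: split the series into a bounded number of central indices and a tail, extract a clean factor of $\e$ from the tail via the mean value theorem and the decay $(\CBa\ik)'(t)=O(|t|^{-(\bt+2)})$ (resp.\ $\CA'(y)=O(|y|^{s-2})$), and handle the central indices by the modulus of continuity of $\CBa\ik$ (resp.\ $\CA$). The only cosmetic difference is that the paper computes those moduli by hand from the integral representations $\int\ik^{(\lceil\bt\rceil)}(q)(q-r)_\pm^{-\{\bt\}}dq$ and $\CH\ik^{(\bt)}$, whereas you invoke the standard facts that a Riemann--Liouville integral of order $1-\{\bt\}$ of a compactly supported $L^\infty$ function is H\"older of that order, and that the Hilbert transform of a compactly supported Lipschitz function is log-Lipschitz.
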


Using that $H'(\xi)=x_0$, it is easy to show that 
\be\label{scnd-der-shifted}
\left.\frac{\pa^2 (H(\al)-\al\cdot x_0)}{(\pa\al^\perp)^2}\right|_{\al^\perp=0}=\check H''(\Theta_0),
\ee
where the directions $\al\in\Omega$ are parametrized by $\al^\perp$. Expanding the function $\al\cdot x_0:\,S^{n-1}\to\br$ in the Taylor series around $\al=\dir_0$ (i.e. $\al^\perp=0$) and using that $H(\al)-\al\cdot x_0$ is quadratic in $\al^\perp$, we find using \eqref{an-psi-pr-1} and \eqref{scnd-der-shifted} (cf. \eqref{g1a-st1}):
\be\label{new-g1a-st1}\begin{split}
&g_\e^{(1a)}(\check x)\\
&=2\sum_{\al_{\vec k}\in\Omega_a} \psi_{\dir_0}\biggl(h+r_{\vec k}+\frac1\e P(\al_{\vec k}^\perp)+O(\e^{1/2}), r_{\vec k}+\frac1\e P(\al_{\vec k}^\perp)\\
&\qquad\qquad+\frac{\check H''(\Theta_0)\al_{\vec k}^\perp\cdot \al_{\vec k}^\perp}{2\e}+O(\e^{1/2})\biggr)(1+O(\e^{1/2}))\frac{|\Delta\al_{\vec k}|}{\e^{(n-1)/2}}\\
&=2\sum_{\al_{\vec k}\in\Omega_a} \psi_{\dir_0}\left(h+r_{\vec k}+\frac1\e P(\al_{\vec k}^\perp), r_{\vec k}+\frac1\e P(\al_{\vec k}^\perp)+\frac{\check H''(\Theta_0)\al_{\vec k}^\perp\cdot \al_{\vec k}^\perp}{2\e}\right)\frac{|\Delta\al_{\vec k}|}{\e^{(n-1)/2}}\\ 
&\qquad+o(1),\ r_{\vec k}:=\left(\frac{\dir_0\cdot x_0}{\e}+{\vec a}\cdot u_\e\right)+{\vec a}\cdot{\vec k}.
\end{split}
\ee
Here $P$ is some homogenous polynomial of degree 2, $\vec a=(T'(0))^t x_0$, and $u_\e$ is defined in \eqref{alH}. The factor $1+O(\e^{1/2})$ on the third line in \eqref{new-g1a-st1} appears, because we replace $\psi_{\al_k}$ with $\psi_{\dir_0}$. This means that we set $\al=\dir_0$ in \eqref{Bop-dfn}, \eqref{gfn-def} when computing $b_\pm$, $a_\pm$ in the definition of $\psi$ (cf. \eqref{three-fns}). A more accurate estimate than $o(1)$ can be obtained in \eqref{new-g1a-st1} using \eqref{an-psi-pr-1} and that the sum is uniformly bounded as $\e\to0$ (recall that $A>0$ in the definitions of $\Omega_{a,b}$ is fixed when we consider the limit as $\e\to0$). However, a more precise estimate is not necessary for our purposes. The assumption that $x_0$ is generic implies that the sequence $r_{\vec k}$ is uniformly distributed mod 1. This is easy to see by extending the arguments in Theorem 2.9 and Example 2.9 of \cite{KN_06} from double sequences to $(n-1)$-dimensional sequences. Arguing similarly to \cite{kat_2017, kat19a, kat19b} gives 
\be\label{ga1-lim-gen}\begin{split}
\lim_{\e\to0}g_\e^{(1a)}(\check x)
&= 2\int_{|u|\le A}\int_0^1 \psi_{\dir_0}\left(h+r+P(u), r+P(u)+\frac{\check H''(\Theta_0)u\cdot u}{2}\right)dr du,
\end{split}
\ee
where the integral with respect to $u$ is over a disk in the hyperplane $\Theta_0^\perp$. 

\subsection{Estimation of $g_\e^{(1b)}(\check x)$}\label{ssec:estg1b}

Define
\be\label{remainder}
R_\Psi(t)=\begin{cases} |t|^{s-2-\bt},& \text{if $\CBa$ is local or $\CBa$ is not local and }\bt-s\in \mathbb Z,\\
|t|^{-\lceil\bt-s+1\rceil},& \text{if $\CBa$ is not local and }\bt-s\not\in \mathbb Z. \end{cases}
\ee
We need the following two lemmas, which are proven in Appendices~\ref{sec:contverpsi} and \ref{sec:psi-asympt}, respectively.

\begin{lemma}\label{contverpsi} 
If ${\CBa}$ is local, i.e. $\bt\in \mathbb N$, then
\be\label{B-loc-res}
\Psi(t)=b_+(i\pa_t)^\bt\CA(t)+O(R_\Psi(t)),\ t\to\infty.
\ee
If ${\CB}$ is not local, then
\be\label{Psi-ft-mult-res}
\Psi(t)=\CF^{-1}\left(c_+^{(1)}\la_+^{\bt-s}+c_-^{(1)}\la_-^{\bt-s}\right)
+O(R_\Psi(t)),\ t\to\infty.
\ee
Moreover,  
\be\label{Psi-ass-loc}
\Psi(t)=O(R_\Psi(t))\text{ as } \begin{cases}
t\to-\infty\text{ if}& c_-^{(1)}=c_+^{(1)}e^{i(\bt-s)\pi},\,\bt-s\not\in \mathbb Z,\\
t\to+\infty\text{ if}& c_-^{(1)}=c_+^{(1)}e^{-i(\bt-s)\pi},\,\bt-s\not\in \mathbb Z,\\
t\to\pm\infty\text{ if}& c_-^{(1)}=(-1)^{\bt-s}c_+^{(1)},\,\bt-s\in \mathbb Z.
\end{cases}
\ee
\end{lemma}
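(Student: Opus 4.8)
\emph{Proof plan.} I would argue on the Fourier side. By \eqref{three-fns} and \eqref{aux-fn-lim-pm},
\[
\Psi=\CF^{-1}\bigl(\tilde\ik(\la)\,P(\la)\bigr),\qquad P(\la):=c_+^{(1)}\la_+^{\bt-s}+c_-^{(1)}\la_-^{\bt-s},
\]
so the natural comparison object is the ``ideal'' profile $\Psi_\infty:=\CF^{-1}(P)$, obtained by replacing $\tilde\ik$ with $1$. This $\Psi_\infty$ is exactly the right-hand side of \eqref{Psi-ft-mult-res}; and in the local case $\bt\in\mathbb N$, where $\tilde b(\la)=b_+\la^\bt$ is a polynomial and hence $\tilde b\tilde\CA=\CF\bigl(b_+(i\pa_t)^\bt\CA\bigr)=P$, it coincides with $b_+(i\pa_t)^\bt\CA$, the right-hand side of \eqref{B-loc-res}. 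Thus the first two assertions of the lemma both reduce to the single estimate
\[
E(t):=\Psi(t)-\Psi_\infty(t)=\CF^{-1}\bigl((\tilde\ik(\la)-1)P(\la)\bigr)(t)=O(R_\Psi(t)),\qquad t\to\infty .
\]

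\textbf{Local case.} Here I would bypass Fourier analysis. Since $\CBa\ik=b_+(i\pa_t)^\bt\ik$ is compactly supported, $\Psi=b_+i^\bt\,\ik*\CA^{(\bt)}$, where for $|t|$ large $\CA^{(\bt)}$ near $r=t$ is the classical derivative $O(|t|^{s-1-\bt})$ of $\CA$ (the $\de$-supported parts of $\CA^{(\bt)}$ and of $(i\pa_t)^\bt\CA$, present only when $s\in\mathbb N$ and $s\le\bt$, contribute nothing for $|t|$ large). Taylor-expanding $\CA^{(\bt)}$ about $t$ and using $\int\ik=1$ (IK4$'$) gives $\Psi(t)-b_+(i\pa_t)^\bt\CA(t)=O\bigl(\sup_{|\tau-t|\le C}|\CA^{(\bt+1)}(\tau)|\bigr)=O(|t|^{s-2-\bt})=O(R_\Psi(t))$; the extra vanishing moments of IK1$'$ sharpen this but are not needed.

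\textbf{Non-local case.} The decay of $E(t)$ as $|t|\to\infty$ is controlled separately by the behaviour of $(\tilde\ik-1)P$ as $|\la|\to\infty$ and by its smoothness at $\la=0$; I would cut $\la$ near and away from the origin with a smooth partition. On $|\la|\gtrsim1$: IK3$'$ (together with compact support of $\ik$) forces $\tilde\ik$ and all its derivatives to decay like $|\la|^{-\bt_0}$, which is more than enough for integrability of $\tilde\ik P$ and all its derivatives there (since $\bt_0>\bt-s+1$, as $s\ge3/2$), so repeated integration by parts makes this piece, together with the matching tail of $\Psi_\infty$, contribute $O(|t|^{-N})$ for every $N$. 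On $|\la|\lesssim1$: the moment relations IK1$'$ are equivalent to $\int y^m\ik(y)\,dy=0$, $1\le m\le L_\bt$, hence $\tilde\ik(\la)-1=O(|\la|^{L_\bt+1})$ and $(\tilde\ik(\la)-1)P(\la)=O(|\la|^{L_\bt+1+\bt-s})$ with $j$-th derivatives $O(|\la|^{L_\bt+1+\bt-s-j})$; integrating by parts $j$ times in $\la$ (legitimate as long as the exponent stays $>-1$, which by \eqref{orders} and $L_\bt\ge\bt_0\ge1$ permits $j$ well past $\lceil\bt-s+1\rceil$) yields $O(|t|^{-j})=O(R_\Psi(t))$ in each of the three regimes of \eqref{remainder}. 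Combining the two regions gives $E(t)=O(R_\Psi(t))$.

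\textbf{The ``moreover'' assertion, and the main obstacle.} The ``moreover'' part follows from $\Psi=\Psi_\infty+O(R_\Psi)$ and the support of $\Psi_\infty$: if $\bt-s\notin\mathbb Z$ and $c_-^{(1)}=c_+^{(1)}e^{\mp i(\bt-s)\pi}$ then $P(\la)=c_+^{(1)}(\la\pm i0)^{\bt-s}$, whose inverse Fourier transform is supported in $\{\mp t\ge0\}$ (shift the contour into the half-plane where $(\la\pm i0)^{\bt-s}e^{-i\la t}$ is holomorphic and decaying), so $\Psi_\infty\equiv0$ on the complementary half-line and there $\Psi=E=O(R_\Psi)$; if $\bt-s\in\mathbb Z$ then $\kappa_2\ge0$ in \eqref{kappa-assns} forces $\bt-s\ge(n-3)/2\ge0$, so $c_-^{(1)}=(-1)^{\bt-s}c_+^{(1)}$ makes $P(\la)=c_+^{(1)}\la^{\bt-s}$ a pure monomial and $\Psi_\infty=c_+^{(1)}i^{\bt-s}\de^{(\bt-s)}$ is supported at the origin. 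The delicate part of the whole argument is the non-local estimate: arranging the cutoff/integration-by-parts bookkeeping so that it reproduces exactly the three cases of $R_\Psi$ in \eqref{remainder}, and checking that the number of vanishing moments built into $L_\bt$ via \eqref{orders} and IK1$'$ is precisely what the argument consumes. A secondary point is to treat the distributions $\la_\pm^a$ and products such as $\la_+^\bt\la_+^{-s}$ in the Gelfand--Shilov sense \cite{GS}, which is unambiguous here because $\kappa_2\ge0$ keeps $\bt-s$ off the negative integers, and, in the local case, to peel off the $\de$-supported contributions, which is harmless for $|t|$ large.
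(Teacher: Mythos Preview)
Your proposal is correct and follows the same Fourier-side strategy as the paper: write $\Psi=\CF^{-1}(\tilde\ik\,P)$, compare with $\Psi_\infty=\CF^{-1}(P)$, and identify the ``moreover'' conditions as those making $\Psi_\infty$ one-sided or supported at the origin. There are, however, two genuine differences from the paper's argument worth noting.

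First, the paper does not split into local/non-local cases; it splits on whether $\bt-s\in\mathbb Z$. For $\bt-s\notin\mathbb Z$ it simply invokes an asymptotic expansion of oscillatory integrals (Theorem~1, \S IV.2 of Wong), and for $\bt-s\in 0\cup\mathbb N$ it integrates by parts directly. Your separate elementary treatment of the local case via $\ik*\CA^{(\bt)}$ and a one-term Taylor expansion is a nice shortcut that avoids any distributional Fourier analysis there.

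Second, and more interesting: in the non-local estimate the paper uses \emph{only} $\tilde\ik(0)=1$ (i.e.\ IK4$'$) together with the decay of $\tilde\ik$ coming from IK3$'$; it never appeals to IK1$'$. The single vanishing of $\tilde\ik-1$ at $0$ already makes the $k$-th derivative of $(\tilde\ik-1)|\la|^{\bt-s}$ locally integrable at the origin for $k=\lceil\bt-s+1\rceil$. Your argument instead extracts from IK1$'$ the vanishing moments $\int y^m\ik(y)\,dy=0$, $1\le m\le L_\bt$ (note: IK1$'$ \emph{implies} these moment conditions, it is not equivalent to them), giving $\tilde\ik(\la)-1=O(|\la|^{L_\bt+1})$. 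This is more vanishing than needed, so your integration-by-parts bookkeeping has slack; the paper's version is tighter and explains why IK1$'$ does not appear among the hypotheses actually consumed by this lemma. Either route works.
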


\begin{lemma}\label{psi-asympt} One has
\be\label{psi-ass-full}
\psi(t,p)-\Psi(t-p)=O\left(R_\Delta(t-p)\right),\  t-p\to\infty,
\ee
where 
\be\label{remainder-v2}
R_\Delta(t)=|t|^{s-2-\bt}\times\begin{cases} 1,& \CBa\text{ is local or $\CBa$ is not local and }\bt\not\in \mathbb N,\\
\log|t|,& \CBa\text{ is not local},\bt\in\mathbb N. 
\end{cases}
\ee
\end{lemma}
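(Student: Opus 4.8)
By the shift relation $\psi(t,p)=\psi(t-m,p-m)$ in \eqref{psi-Psi} and the fact that $\Psi(t-p)$ depends only on $t-p$, the difference $E(t,p):=\psi(t,p)-\Psi(t-p)$ depends only on $t-p$ and on the fractional part of $p$; so I may assume $p\in[0,1)$ and write $N:=t-p\to+\infty$. The starting point is the identity
\be\label{plan-E}
E(t,p)=\sum_{j\in\mathbb Z}G(j)-\int_{\br}G(r)\,dr,\qquad G(r):=(\CBa\ik)(t-r)\,\CA(r-p),
\ee
which displays $E$ as the error of the (infinite) rectangle rule for $G$ (cf.\ \eqref{three-fns}). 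Two features obstruct a naive estimate: $\CA(r-p)$ carries an integrable singularity of order $s-1>0$ at $r=p$, and $(\CBa\ik)(u)$ decays only like $|u|^{-(\bt+1)}$ — it is compactly supported precisely when $\CBa$ is local. The needed cancellation comes from the exactness IK1$'$, equivalently the Strang--Fix conditions: $\tilde\ik(0)=1$, $\tilde\ik$ vanishes to order $L_\bt+1$ at every nonzero point of $2\pi\mathbb Z$, and $\int\ik(u)u^m\,du=\delta_{m0}$ for $0\le m\le L_\bt$. Since $\widetilde{\CBa\ik}=\tilde b\,\tilde\ik$ with $\tilde b(\la)=b_+\la_+^{\bt}+b_-\la_-^{\bt}$ vanishing to order $\bt$ at $\la=0$, a short Poisson-summation computation converts this into the moment identities
\be\label{plan-mom}
\sum_{j\in\mathbb Z}(\CBa\ik)(t-j)(t-j)^m=\int_{\br}(\CBa\ik)(u)u^m\,du=0,\qquad 0\le m<\bt,
\ee
valid for all $t$; when $\CBa$ is local, $\CBa\ik$ is compactly supported, the sums and integrals converge for every $m$, and \eqref{plan-mom} in fact holds through $m=L_\bt=\bt$.

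\textbf{Decomposition and the easy pieces.} I split the $r$-line, relative to $N$, into: a bounded window $|r-p|\le1$ about the singular point of $\CA$; a bounded window $|r-t|\le1$ about the point where $\CBa\ik$ is concentrated; and the complement $\{|r-p|>1\}\cap\{|r-t|>1\}$, on which $G$ is smooth. On the complement with $r-p\le N/2$ one has $|t-r|\ge N/2$, hence $(\CBa\ik)(t-r)=O(N^{-(\bt+1)})$; on the complement with $r-p>N/2$, $\CA$ is smooth with argument $\asymp r-p$. The telescoping boundary terms at $r-p=\pm1,\,N/2$ and $r-t=\pm1$ are each $O(N^{s-2-\bt})$, since at all of them one factor is $O(N^{-(\bt+1)})$ and the other $O(N^{s-1})$. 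On the complement, the first-order Euler--Maclaurin remainder bounds the sum-minus-integral by $\int|G'|$, and inserting $(\CBa\ik)^{(k)}(u)=O(|u|^{-(\bt+1+k)})$ and $\CA^{(k)}(\xi)=O(|\xi|^{s-1-k})$ into $\int\!\big(|(\CBa\ik)'(t-r)\CA(r-p)|+|(\CBa\ik)(t-r)\CA'(r-p)|\big)\,dr$ over each component gives $O(N^{s-2-\bt})$. On $|r-p|\le1$, crudely $|G|\lesssim N^{-(\bt+1)}|\CA(r-p)|$ with $\CA(\cdot-p)\in L^1_{\mathrm{loc}}$, so this window contributes $O(N^{-(\bt+1)})=O(N^{s-2-\bt})$ (here $s\ge3/2$ is used). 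When $\CBa$ is local, the pieces involving the singularity of $\CA$ are empty once $N$ exceeds the diameter of $\ts(\ik)$.

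\textbf{The hard piece.} It remains to treat the window $|r-t|\le1$, where $(\CBa\ik)(t-r)=O(1)$ and $\CA(r-p)$ is smooth with argument $\asymp N$. I Taylor-expand $\CA(r-p)$ about $r=t$. By \eqref{plan-mom}, the polynomial part of degree $<\bt$ is annihilated modulo a tail over $|r-t|>1$, which the algebraic decay of $\CBa\ik$ and another first-order Euler--Maclaurin step control by $O(N^{s-1-m})\,O(N^{m-\bt-1})=O(N^{s-2-\bt})$ per monomial. If $\CBa$ is local the expansion may be carried to degree $L_\bt=\bt$, the whole polynomial part is killed (compact support $+$ \eqref{plan-mom} through $\bt$), and the Taylor remainder contributes $O(N^{s-L_\bt-2})=O(N^{s-\bt-2})$, with no logarithm. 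If $\CBa$ is not local, the moments $\int(\CBa\ik)u^m$ with $m\ge\bt$ diverge, so the polynomial part can only be killed up to degree $\lfloor\bt\rfloor$; one must then balance the Taylor remainder (which behaves like $N^{\,s-\lfloor\bt\rfloor-2}|r-t|^{\lfloor\bt\rfloor+1}$ for $|r-t|=O(1)$ and like $|r-p|^{s-1}$ for $r-p$ large) against $(\CBa\ik)(t-r)=O(|t-r|^{-(\bt+1)})$; the resulting sum/integral is $O(N^{s-2-\bt})$ when $\bt\notin\mathbb N$, while for $\bt\in\mathbb N$ the critical exponent in $(\CBa\ik)(u)u^{\bt}\sim c/u$ produces an extra factor $\log N$. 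Adding the (finitely many, in the local case) pieces, and noting via Lemma~\ref{contverpsi} that the $O(R_\Psi)$-sized part of $\Psi$ is the inverse Fourier transform of the principal symbol shared by $\psi$ and $\Psi$ in \eqref{plan-E} and is therefore already cancelled there, yields $\psi(t,p)-\Psi(t-p)=O(R_\Delta(t-p))$ with $R_\Delta$ as in \eqref{remainder-v2}.

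\textbf{The main obstacle.} The delicate point is the nonlocal case: because $\CBa\ik$ decays only algebraically, the Taylor-cancellation scheme stalls at degree $\lfloor\bt\rfloor$, and for $\bt\in\mathbb N$ this leaves a borderline term whose discrete and continuous versions are each logarithmically divergent. Proving that these divergences cancel up to a remainder that is genuinely $O\big(N^{s-2-\bt}\log N\big)$ and no worse — together with the matching delicate estimate of the Taylor remainder against the slow decay of $\CBa\ik$ over the whole right region — is the heart of the argument, and it is exactly what fixes the form of $R_\Delta$. A secondary burden is keeping every big-$O$ constant uniform (in $t$, and in $\{p\}$) across the zone decomposition so that the pieces may legitimately be summed.
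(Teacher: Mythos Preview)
Your approach is genuinely different from the paper's. The paper does \emph{not} treat $\psi-\Psi$ as a trapezoidal error for the kernel $\CBa\ik$. Instead it factorizes: writing $F_s(t)=\sum_j\ik(t-j)\CA(j-p)$ and $F_i(t)=\int\ik(t-r)\CA(r-p)\,dr$, it observes $\psi-\Psi=\CBa(F_s-F_i)=:\CBa\Delta F$, proves the single estimate $\Delta F^{(l)}(t)=O(|t|^{s-2-l})$ for $0\le l\le L_\bt$ (the gain of one power coming from the exactness of $\ik$ to degree $L_\bt$), and then applies $\CBa$ to $\Delta F$ via its explicit representation as $\pa_t^{\bt}$, a Riesz potential of $\pa_t^{\lceil\bt\rceil}$, or $\CH\pa_t^{\bt}$, splitting the resulting one-dimensional integral in $q$ into a few pieces. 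All the cancellation is captured once, at the level of $\ik$; the operator $\CBa$ is applied afterwards to an already-small quantity.

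Your route has a real gap in the nonlocal case. The step ``the polynomial part of degree $<\bt$ is annihilated modulo a tail over $|r-t|>1$, which \dots\ control by $O(N^{s-1-m})\,O(N^{m-\bt-1})$'' does not follow. Using your moment identity, the window contribution of the $m$-th monomial equals \emph{minus} the tail contribution
\[
c_m\Bigl[\sum_{|j-t|>1}(\CBa\ik)(t-j)(t-j)^m-\int_{|u|>1}(\CBa\ik)(u)\,u^m\,du\Bigr],\qquad c_m=\tfrac{1}{m!}\CA^{(m)}(N)=O(N^{s-1-m}).
\]
The bracket is a trapezoidal error for $h(u)=(\CBa\ik)(u)u^m$ over the \emph{fixed} region $|u|>1$; a first-order Euler--Maclaurin bound gives $O\!\bigl(\int_{|u|>1}|h'(u)|\,du\bigr)=O(1)$, and the boundary terms at $|u|=1$ are likewise $O(1)$. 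There is no factor $O(N^{m-\bt-1})$ here: the tail does not see $N$. Consequently each monomial contributes $O(N^{s-1-m})$, and the $m=0$ term alone is $O(N^{s-1})$, not $O(N^{s-2-\bt})$. Equivalently: the moment identities you can use for $\CBa\ik$ stop at degree $\lfloor\bt\rfloor$ (higher moments diverge when $\CBa$ is nonlocal), so the Taylor remainder you are left with is of order $\lceil\bt\rceil$, which for $\bt\notin\mathbb N$ yields only $O(N^{s-1-\lceil\bt\rceil})=O(N^{s-2-\bt+\{\bt\}})$, strictly weaker than the claim. The paper escapes this precisely because it uses the exactness of $\ik$ itself to degree $L_\bt\ge\bt$ \emph{before} applying $\CBa$; that extra degree of exactness is exactly the missing order you need and is lost once you pre-compose with $\CBa$. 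Your local-case argument is fine (there $\CBa\ik$ is compactly supported and the sum--integral identity holds through $m=\bt$), but in the nonlocal case you would need to rethink the cancellation mechanism---the cleanest fix is to imitate the paper and push the quadrature error onto $\ik$ alone.
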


Similarly to \eqref{psi-bnd} and \eqref{ge1b-bnd}, using Lemmas~\ref{contverpsi} and \ref{psi-asympt} and the fact that $|\al\cdot x_0-H(\al)|\ge c_1|\al^\perp|^2$, $\al\in\Omega_b$, for some $c_1>0$, we get
\be\label{newbnds}\begin{split}
&\left|\psi_{\al}\left(\al\cdot\check x+\frac{\al\cdot x_0}{\e},\frac{H(\al)}\e\right)\right|\le
c_2\left[\frac{|\al^\perp|^2}{\e}\right]^{-(\bt-s+1)},\ \al\in\Omega_b,
\\ 
&|g_\e^{(1b)}(\check x)|\le O(\e^{\frac{n-1}2})\sum_{O(A/\e^{1/2})\le |\vec k|\le O(1/\e)}\left[\frac{(\e |\vec k|)^2}\e\right]^{-(\bt-s+1)}=O\left(A^{-2\kappa_2}\right),
\end{split}
\ee
where the very last big-$O$ term is independent of $\e$. As before, the reason why we can select a single $c_2>0$ for all $\al\in\Omega_b$ in the first line of \eqref{newbnds} follows from the fact that $\tilde B_0(\al)\in C^\infty(\Omega)$ and $a_0(\al)\in C_0^\infty(\Omega)$ (cf. \eqref{Bop-dfn} and \eqref{gfn-def}). Also, we used here that $\check x$ is confined to a bounded set.
Therefore, 
\be\label{limsupg1b}
\lim_{A\to\infty}\limsup_{\e\to0}|g_\e^{(1b)}(\check x)|=0
\ee
provided that $\kappa_2>0$. The boundary case $\kappa_2=0$ is considered separately (see Section~\ref{ssec:trans-v2} below). Since we never proved that $\lim_{\e\to0}g_\e^{(1b)}(\check x)$ exists, we instead used `$\limsup$' in \eqref{limsupg1b}.

\subsection{DTB in the case $\kappa_2>0$.}\label{ssec:trans}
Combining \eqref{ga1-lim-gen}, \eqref{limsupg1b}, and \eqref{newlt-1} and using that $A$ can be arbitrarily large, we get
\be\label{ga1-lim-gen-st2}\begin{split}
\lim_{\e\to0}\e^{{\kappa_2}}(\CB_\e g_0)(\xe)
&= 2\int_{\br^{n-1}}\int_0^1 \psi_{\dir_0}\left(h+r, r+\frac{\check H''(\Theta_0)u\cdot u}{2}\right)dr du\\
&=\frac{2^{(n+1)/2}|S^{n-2}|}{\left|\text{det}\check H''(\Theta_0)\right|^{1/2}}\ioi\int_0^1 \psi_{\dir_0}\left(h+r, r-t^2\right)dr t^{n-2}dt.
\end{split}
\ee
We used here that $\check H''(\dir_0)$ is negative definite and that $P(u)$ drops out from both arguments due to \eqref{psi-Psi}. In view of \eqref{three-fns}, \eqref{psi-Psi}, and \eqref{ga1-lim-gen-st2} (compare with \eqref{er-hr}), we introduce
\be\label{uzf-int}
J:=\frac12\int \Psi(h+t) t_+^{(n-3)/2}dt.
\ee
Essentially, $J$ is a convolution of three distributions. 
The fact that the integral with respect to $t$ is absolutely convergent follows from the assumption $\kappa_2>0$ and Lemma~\ref{contverpsi}. 
Hence, \eqref{three-fns} and \eqref{aux-fn-lim-pm} yield
\be\label{J-ft}\begin{split}
J=&\frac{\Gamma((n-1)/2)}{4\pi}\int \tilde\ik(\la)(c_+^{(1)}\la_+^\bt+c_-^{(1)}\la_-^\bt)
 e^{-i\frac{n-1}2\frac{\pi}2}(\la-i0)^{-\frac{n-1}2}e^{-i\la h}d\la\\
=&\frac1{2\pi}\frac{\Gamma((n-1)/2)}{2}\int \tilde\ik(\la)(c_+^{(2)}\la_+^{{\kappa_2}-1}+c_-^{(2)}\la_-^{{\kappa_2}-1})e^{-i\la h}d\la,\\
c_\pm^{(2)}:=&e^{\mp i\frac{n-1}2\frac{\pi}2}c_\pm^{(1)}.
\end{split}
\ee
By assumption, ${\kappa_2}> 0$, so the above multiplication of distributions is well-defined and leads to a function $c_+^{(2)}\la_+^{{\kappa_2}-1}+c_-^{(2)}\la_-^{{\kappa_2}-1}\in L^1_{\text{loc}}(\br)$. Combining the prefactors in \eqref{ga1-lim-gen-st2}, \eqref{J-ft} and applying the inverse Fourier transform (see also Appendix~\ref{sec:usef}) gives the distribution $\mu$ such that the right-hand side of \eqref{ga1-lim-gen-st2} equals $\ik*\mu$ (cf. \eqref{fe-lim-simple}, \eqref{mu-in-thrm} below). 

Let us simplify \eqref{J-ft} in the case $g_0=\hat f_0$. From \eqref{c1-pm},
\be\label{cpm}\begin{split}
c_\pm^{(2)}=e^{\mp i\frac{n-1}2\frac{\pi}2}\tilde B_0(\pm\dir_0) \omega(\pm \dir_0)
\tilde v_0(\pm\dir_0).
\end{split}
\ee
Consequently,
\be\label{cpm-all}\begin{split}
\frac{\Gamma((n-1)/2)c_\pm^{(2)}}2
=&\tilde B_0(\pm\dir_0) \Gamma((n-1)/2)\frac{|\det \check H''(\dir_0)|^{1/2}}
{2(2\pi)^{(n-1)/2}}\tilde v_0(\pm\dir_0).
\end{split}
\ee
Multiply \eqref{cpm-all} with the prefactor on the right in \eqref{ga1-lim-gen-st2} to obtain
\be\label{pref-cpm}
\tilde B_0(\pm\dir_0) 2^{(n+1)/2}|S^{n-2}|\Gamma((n-1)/2)\frac{\tilde v_0(\pm\dir_0)}{2(2\pi)^{(n-1)/2}}=2\tilde B_0(\pm\dir_0)\tilde v_0(\pm\dir_0),
\ee
which leads to the same distribution as in \eqref{mu-def-1}, \eqref{lim-res-final}.

Thus, we have proven the following result.
\begin{theorem}\label{lead-thrm-pos} Suppose $\CB$ is given by \eqref{B-oper}, where 
\begin{enumerate}
\item $\tilde B(\al,\la)\equiv\la^{\bt}\tilde B_0(\al)$, $\la>0$, $\al \in S^{n-1}$, 
\item $\tilde B(\al,\la)$ is even, and 
\item $\tilde B_0(\al)\in C^\infty(S^{n-1})$. 
\end{enumerate}
Suppose also that $g_0(\al,H(\al)+p)=a_0(\al)p_+^{s-1}+a_0(-\al)p_-^{s-1}$, where $s\ge(n+1)/2$, and
\begin{enumerate}
\item $a_0 \in \coi(\Omega\cup(-\Omega))$,  
\item $H(\xi)\in C^\infty(\br^n\setminus \{0\})$ is real-valued, homogeneous of degree 1, and
\item $\check H''(\al)$ is negative definite on $\Omega$, $\dir_0\in\Omega$.
\end{enumerate}   
If ${\kappa_2}=\bt-s-\frac{n-3}2 > 0$, and $x_0$ is generic, then
\be\label{fe-lim-simple}
\lim_{\e\to0}\e^{{\kappa_2}}(\CB_\e g_0)(\xe)=(\ik*\mu)(h),
\ee
where
\be\label{mu-in-thrm}
\mu(t)=\frac{(2\pi)^{(n-1)/2}}{\left|\text{det}\check H''(\Theta_0)\right|^{1/2}}
\frac{\Gamma(\kappa_2)}{\pi}\left(c_+^{(2)}e^{-i\kappa_2\frac\pi2}(t-i0)^{-\kappa_2}+c_-^{(2)}e^{i\kappa_2\frac\pi2}(t+i0)^{-\kappa_2}\right),
\ee
and $c_\pm^{(2)}$ are defined in \eqref{J-ft}. If $g_0(\al,p)=\hat f_0(\al,p)$, which is defined in \eqref{lim-res-3}, then $\mu$ is given by \eqref{mu-def-1}, \eqref{A-def}. 
\end{theorem}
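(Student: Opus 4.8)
The plan is to prove the theorem by assembling the estimates developed in Sections~\ref{ssec:prelim}--\ref{ssec:trans}, so that the proof amounts to bookkeeping of a chain of identities and bounds already in place. First I would substitute $g=g_0$ into the definition \eqref{CBe-def} of $\CB_\e$, insert the explicit leading forms \eqref{Bop-dfn} and \eqref{gfn-def}, and recognize the result in terms of the auxiliary functions $\psi_\al$, $\Psi$ from \eqref{three-fns}. This gives the representation \eqref{newlt-1} for $g_\e^{(1)}(\check x):=\e^{\kappa_2}(\CB_\e g_0)(\xe)$ as a sum over $\al_{\vec k}\in\Omega$ of shifted copies of $\psi_{\al_{\vec k}}$ weighted by $|\Delta\al_{\vec k}|/\e^{(n-1)/2}$; evenness of $B$ and $g_0$ lets us restrict to $\al_{\vec k}\in\Omega$ at the cost of a factor $2$.

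Next I would split the sum via \eqref{two-sets-rn} into the near-normal part $g_\e^{(1a)}$ (where $|\al^\perp|\le A\e^{1/2}$) and the far part $g_\e^{(1b)}$. For $g_\e^{(1a)}$ the key ingredients are: (i) $H(\al)-\al\cdot x_0$ vanishes to second order at $\al=\dir_0$ with Hessian $\check H''(\dir_0)$ (equation \eqref{scnd-der-shifted}); (ii) Lemma~\ref{psi-incr} controls the increments of $\psi_\al$ so that linearizing the arguments and replacing $\psi_{\al_{\vec k}}$ by $\psi_{\dir_0}$ costs only $o(1)$; (iii) after these reductions the sum becomes a Riemann-type sum over the dilated lattice $\al_{\vec k}^\perp=\e^{1/2}\tilde\al$ whose arguments also carry the fractional parts $r_{\vec k}$ appearing in \eqref{new-g1a-st1}. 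Since $x_0$ is generic, $r_{\vec k}$ is uniformly distributed mod $1$ (Weyl's criterion for $(n-1)$-dimensional sequences, extending Theorem~2.9 and Example~2.9 of \cite{KN_06}), and the joint structure---a smooth Riemann sum in $\tilde\al$ averaged against the equidistributed $r_{\vec k}$---converges to the product of integrals in \eqref{ga1-lim-gen}. For $g_\e^{(1b)}$ I would combine the far-field asymptotics of $\psi$ and $\Psi$ from Lemmas~\ref{contverpsi} and \ref{psi-asympt} with the curvature lower bound $|\al\cdot x_0-H(\al)|\ge c_1|\al^\perp|^2$ on $\Omega_b$ to obtain the uniform bound $|g_\e^{(1b)}(\check x)|=O(A^{-2\kappa_2})$ in \eqref{newbnds}; since $\kappa_2>0$ this yields $\lim_{A\to\infty}\limsup_{\e\to0}|g_\e^{(1b)}(\check x)|=0$.

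Letting $A\to\infty$ produces the limit \eqref{ga1-lim-gen-st2}. I would then pass to polar coordinates in the $u$-integral, use the shift-invariance \eqref{psi-Psi} to cancel the polynomial $P(u)$ from both arguments, and reduce everything to the single convolution integral $J$ of \eqref{uzf-int}. Inserting the Fourier representation \eqref{aux-fn-lim-pm} of $\Psi$, carrying out the multiplication of homogeneous distributions $\la_\pm^\bt(\la\mp i0)^{-(n-1)/2}$---legitimate since $\kappa_2=\bt-s-(n-3)/2>0$, so the product is a locally integrable homogeneous function---and applying $\CF^{-1}$ via the formulas in Appendix~\ref{sec:usef}, I identify the right-hand side of \eqref{ga1-lim-gen-st2} as $\ik*\mu$ with $\mu$ given by \eqref{mu-in-thrm}. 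The special case $g_0=\hat f_0$ then follows by substituting \eqref{c1-pm} and tracking the constants through \eqref{cpm}--\eqref{pref-cpm}, so that $\mu$ matches \eqref{mu-def-1} and \eqref{A-def}.

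I expect the main obstacle to be step (iii): justifying that the lattice sum over $\Omega_a$ converges to \eqref{ga1-lim-gen}. This is not a plain Riemann sum, because the summand depends on $\al_{\vec k}^\perp$ simultaneously through a genuinely equidistributed quantity ($r_{\vec k}$, whose equidistribution requires the genericity hypothesis) and through smoothly varying quantities ($P(\al_{\vec k}^\perp)/\e$ and $\check H''(\dir_0)\al_{\vec k}^\perp\cdot\al_{\vec k}^\perp/(2\e)$, which after rescaling become Riemann data on the disk $|\tilde\al|\le A$ in $\dir_0^\perp$). Disentangling these two scales, establishing the joint convergence, and checking that the uniformity asserted in Lemma~\ref{psi-incr} is strong enough to make all linearization errors summable is the technical heart of the argument, and is exactly where the proof parallels and extends \cite{kat_2017, kat19a, kat19b}.
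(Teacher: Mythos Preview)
Your proposal is correct and follows essentially the same route as the paper: the argument in Sections~\ref{ssec:prelim}--\ref{ssec:trans} proceeds exactly by representing $\e^{\kappa_2}(\CB_\e g_0)(\xe)$ as in \eqref{newlt-1}, splitting into $\Omega_a$ and $\Omega_b$, handling the near part via Lemma~\ref{psi-incr} and equidistribution of $r_{\vec k}$, bounding the far part via Lemmas~\ref{contverpsi}--\ref{psi-asympt}, and then identifying the limit through \eqref{ga1-lim-gen-st2}--\eqref{pref-cpm}. Your identification of step~(iii) as the main technical point is also accurate; the paper likewise defers the joint Riemann/equidistribution convergence to the machinery of \cite{kat_2017, kat19a, kat19b}.
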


\noindent
{\bf Example.} Return now to the 2D LT example at the end of Section~\ref{sec:gen}. Substituting $\mu(p)=1/(\pi p)$ into \eqref{fe-lim-simple}, which was computed following \eqref{params-LT}, we recover the formula \eqref{final-lim-final} with $f_+=1$:
\be\label{fe-lim-simple-LT}\begin{split}
&\lim_{\e\to0}\e(\CB_\e\hat f)(\xe)=\lim_{\e\to0}\e \lte(\xe)=\int_{\br}\frac{\ik(h-r)}{\pi r}dr.
\end{split}
\ee

\subsection{DTB in the case $\kappa_2=0$.}\label{ssec:trans-v2} From \eqref{c-ratios-req}, \eqref{aux-fn-lim-pm}, and Lemmas~\ref{contverpsi}, \ref{psi-asympt} it follows that $\psi(t,p)=O((t-p)^{s-1-b-c})$ as $t-p\to+\infty$ for some $c,0<c\le 1$. This is the correct limit to consider (i.e., not $t-p\to-\infty$), because $\check H''(\Theta_0)$ is negative definite. Computing similarly to \eqref{newbnds} gives
\be\label{newbnds-v2}\begin{split}
&\left|\psi_{\al}\left(\al\cdot\check x+\frac{\al\cdot x_0}{\e},\frac{H(\al)}\e\right)\right|\le
c\left[\frac{|\al^\perp|^2}{\e}\right]^{s-1-\bt-c},\ \al\in\Omega_b,
\\ 
&|g_\e^{(1b)}(\check x)|\le O(\e^{\frac{n-1}2})\sum_{O(A/\e^{1/2})\le |\vec k|\le O(1/\e)}\left[\frac{(\e |\vec k|)^2}\e\right]^{s-1-\bt-c}=O\left(A^{-2c}\right).
\end{split}
\ee
Hence $\lim_{A\to\infty}\limsup_{\e\to0}|g_\e^{(1b)}(\check x)|=0$, and we obtain the same integrals as in \eqref{ga1-lim-gen-st2}, \eqref{uzf-int}.

Applying \eqref{c-ratios-req}, \eqref{aux-fn-lim-pm}, and Lemma~\ref{contverpsi} one more time we conclude that $\Psi(t)$ decays sufficiently fast as $t\to+\infty$, and the integral in \eqref{uzf-int} is absolutely convergent. Hence the above derivation holds in the case $\kappa_2=0$ as well. The only modification is that now the analogue of \eqref{J-ft} becomes
\be\label{J-ft-v2}\begin{split}
J=&\frac1{2\pi}\frac{\Gamma((n-1)/2)}{2}c_+^{(3)}(-i)\int \tilde\ik(\la)(\la-i0)^{-1}e^{-i\la h}d\la,\ c_\pm^{(3)}:= e^{\mp i\frac{n-3}2\frac{\pi}2}c_\pm^{(1)}.
\end{split}
\ee
When deriving \eqref{J-ft-v2}, we used that $c_+^{(3)}=c_-^{(3)}$, which follows from \eqref{c-ratios-req} and \eqref{aux-fn-lim-pm}. Now, $\mu(p)$ can be found by applying the inverse Fourier transform. 

To prove the assertion in more detail, suppose first that $\bt-s\not\in\mathbb Z$.
By \eqref{c-ratios-req} and \eqref{aux-fn-lim-pm} (cf. \eqref{c-ratios}), 
\be\label{G-ft}\begin{split}
\Psi(h)=&\frac{c_+^{(1)}}{2\pi}\int \tilde\ik(\la)(\la-i0)^{\bt-s}e^{-i\la h}d\la\\
=&\frac{i^k c_+^{(1)}}{2\pi}\int [(-i\la)^k\tilde\ik(\la)](\la-i0)^{\nu-1}e^{-i\la h}d\la\\
=&\frac{e^{i(\bt-s)\frac{\pi}2} c_+^{(1)}}{\Gamma(1-\nu)}\int (h-p)_-^{-\nu}\ik^{(k)}(p)dp,
\end{split}
\ee
where $k=\lceil \bt-s\rceil$, $\nu=\{\bt-s\}$. Replacing $h$ with $h+t$, substituting \eqref{G-ft} into \eqref{uzf-int}, changing the order of the $p$ and $t$ integrations (the double integral is absolutely convergent, because the domain of integration is a bounded set), and integrating with respect to $t$, we get \eqref{J-ft-v2} written as a convolution. This argument is similar to the one in \eqref{er-hr-byparts}.

If $\bt-s\in 0\cup\mathbb N$, then \eqref{c-ratios-req}, \eqref{aux-fn-lim-pm}, and \eqref{Psi-ft-mult-res}, imply that $\Psi=c\ik^{(\bt-s)}$ for some $c$. Upon integrating by parts, we again get \eqref{J-ft-v2} written as a convolution. 

If $g(\al,p)=\hat f_0(\al,t)$, we find
\be\label{new-mup}
\mu(p)=2\mu_+{\mathcal F}^{-1}((\la-i0)^{-1})=2\tilde B_0(\dir_0)\tilde v_0(\dir_0)i p_-^0.
\ee
This proves the following result.
\begin{theorem}\label{lead-thrm-zero} Suppose $\CB$ and $g_0$ are the same as in Theorem~\ref{lead-thrm-pos}, and $x_0$ is generic. Suppose ${\kappa_2}=0$ and condition \eqref{c-ratios-req} is satisfied. One has
\be\label{fe-lim-simple-v2}
\lim_{\e\to0}(\CB_\e g_0)(\xe)=(\ik*\mu)(h),
\ee
where
\be\label{mu-in-thrm-k20}
\mu(t)=\frac{2(2\pi)^{(n-1)/2}}{\left|\text{det}\check H''(\Theta_0)\right|^{1/2}}
c_+^{(3)}t_-^0,
\ee
and $c_+^{(3)}$ is given in \eqref{J-ft-v2}. In particular, if $g_0(\al,p)=\hat f_0(\al,p)$, which is defined in \eqref{lim-res-3}, then $\mu$ is given by \eqref{new-mup}. 
\end{theorem}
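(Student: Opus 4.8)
The plan is to follow the same scheme as in the proof of Theorem~\ref{lead-thrm-pos}, modified to handle the critical exponent $\kappa_2=0$. First I would recall the decomposition $\CB_\e g_0 = g_\e^{(1a)}+g_\e^{(1b)}$ associated with the splitting $\Omega=\Omega_a\cup\Omega_b$ in \eqref{two-sets-rn}, and the expression \eqref{newlt-1} for $g_\e^{(1)}(\check x)=\e^{\kappa_2}(\CB_\e g_0)(\xe)$ (here $\e^{\kappa_2}=1$). The contribution from $\Omega_a$ was already analyzed in Subsection~\ref{ssec:clt} without using the sign of $\kappa_2$: equations \eqref{new-g1a-st1}--\eqref{ga1-lim-gen}, based on Lemma~\ref{psi-incr} (to replace $\psi_{\al_{\vec k}}$ by $\psi_{\dir_0}$ and to absorb the $O(\e^{1/2})$ perturbations), on \eqref{scnd-der-shifted}, and on the uniform distribution mod $1$ of the sequence $r_{\vec k}$ (which is where genericity of $x_0$ enters). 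Thus $\lim_{\e\to0}g_\e^{(1a)}(\check x)$ is the double integral in \eqref{ga1-lim-gen}, which after the change of variables reducing the quadratic form $\check H''(\Theta_0)u\cdot u$ to $-2t^2$ becomes the expression on the right of \eqref{ga1-lim-gen-st2}.

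The next step is to show that the $\Omega_b$ contribution is negligible, i.e. $\lim_{A\to\infty}\limsup_{\e\to0}|g_\e^{(1b)}(\check x)|=0$. This is where $\kappa_2=0$ requires a separate argument, carried out in the estimates \eqref{newbnds-v2}: one uses condition \eqref{c-ratios-req} together with \eqref{aux-fn-lim-pm} and Lemmas~\ref{contverpsi} and \ref{psi-asympt} to conclude that $\psi(t,p)=O((t-p)^{s-1-\bt-c})$ as $t-p\to+\infty$ for some $c\in(0,1]$, rather than merely $O((t-p)^{s-1-\bt})$ (the latter would only be $O(1)$ and would not decay). With this improved decay, and using $|\al\cdot x_0-H(\al)|\ge c_1|\al^\perp|^2$ on $\Omega_b$, the sum defining $g_\e^{(1b)}$ is bounded by $O(A^{-2c})\to0$; the uniformity of the constants over $\al\in\Omega_b$ follows from $\tilde B_0\in C^\infty(\Omega)$ and $a_0\in\coi(\Omega)$, and the fact that $\check x$ ranges over a bounded set.

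Combining the two pieces gives $\lim_{\e\to0}(\CB_\e g_0)(\xe)$ equal to the right side of \eqref{ga1-lim-gen-st2}, which I would then recognize as a convolution. Introducing $J$ as in \eqref{uzf-int} and passing to the Fourier side, the key observation at $\kappa_2=0$ is that \eqref{c-ratios-req} and \eqref{aux-fn-lim-pm} force $c_+^{(3)}=c_-^{(3)}$, so the two homogeneous pieces $\la_\pm^{\bt-s}$ recombine into a single multiple of $(\la-i0)^{\bt-s}$; absolute convergence of the $t$-integral in \eqref{uzf-int} follows from the decay of $\Psi$ supplied by Lemma~\ref{contverpsi} under the recombination condition. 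I would then establish \eqref{J-ft-v2} by the case analysis already sketched in the text: for $\bt-s\notin\mathbb Z$ one uses the representation \eqref{G-ft} of $\Psi(h)$ as a fractional integral of $\ik^{(k)}$, substitutes into \eqref{uzf-int}, interchanges the (absolutely convergent) $p$- and $t$-integrals and integrates in $t$ exactly as in \eqref{er-hr-byparts}; for $\bt-s\in 0\cup\mathbb N$ one has $\Psi=c\,\ik^{(\bt-s)}$ by \eqref{Psi-ft-mult-res} and integrates by parts. Either way $J$ is a convolution of $\ik$ with $\mu$, where $\mu(t)=\frac{2(2\pi)^{(n-1)/2}}{|\det\check H''(\Theta_0)|^{1/2}}c_+^{(3)}t_-^0$ after collecting the prefactors from \eqref{ga1-lim-gen-st2} and \eqref{J-ft-v2}; the specialization to $g_0=\hat f_0$ giving \eqref{new-mup} follows by substituting \eqref{c1-pm} for $c_\pm^{(1)}$ and using \eqref{omega-pm}. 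The main obstacle is the $\Omega_b$ estimate: unlike the $\kappa_2>0$ case, where plain integrability of $\Psi$ suffices, at $\kappa_2=0$ one must extract the extra decay of order $c>0$ from the cancellation condition \eqref{c-ratios-req}, and verify that this cancellation survives both the passage from $\psi$ to $\Psi$ (Lemma~\ref{psi-asympt}) and the non-integer/integer dichotomy in Lemma~\ref{contverpsi}.
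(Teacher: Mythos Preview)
Your proposal is correct and follows essentially the same route as the paper's own argument in Subsection~\ref{ssec:trans-v2}: the $\Omega_a$ analysis is reused unchanged, the $\Omega_b$ bound is obtained by extracting the extra decay exponent $c\in(0,1]$ for $\psi$ from condition \eqref{c-ratios-req} via \eqref{aux-fn-lim-pm} and Lemmas~\ref{contverpsi}, \ref{psi-asympt}, and the identification of $J$ as $(\ik*\mu)(h)$ is done by the same $\bt-s\notin\mathbb Z$ / $\bt-s\in 0\cup\mathbb N$ case split culminating in \eqref{J-ft-v2} and \eqref{mu-in-thrm-k20}. Your identification of the main obstacle (that at $\kappa_2=0$ plain $O((t-p)^{s-1-\bt})$ decay is insufficient and one must invoke the cancellation \eqref{c-ratios-req}) matches the paper exactly.
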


The difference between \eqref{A-def} and \eqref{new-mup} is that the result in \eqref{A-def} is non-unique (i.e., defined up to a constant if $\kappa_2=0$), and the result in \eqref{new-mup} is unique. There is no contradiction between the two formulas, because they do match up to a constant.

\noindent
{\bf Example.} In the case of 3D exact reconstruction for a function with one-sided jump discontinuity, we have 
\be\label{params-exact}\begin{split}
&n=3,\ \bt=2,\ {\kappa_1}=0,\ s=2,\ {\kappa_2}=\bt-s-\frac{n-3}2=0,\\
&v_+=i(2\pi)^2,\ \tilde B_0(\dir_0)=1/(8\pi^2).
\end{split}
\ee
Substituting \eqref{params-exact} into \eqref{new-mup}, \eqref{fe-lim-simple-v2}, we recover the formula (2.15) of \cite{kat19a}:
\be\label{exact-simple}\begin{split}
&\lim_{\e\to0}(\CB_\e\hat f)(\xe)=-\int_h^\infty\ik(t)dt.
\end{split}
\ee

\section{Lower order terms}\label{sec:lots}

In the previous section we computed the DTB of $\CB_\e g$ by retaining the leading order terms (corresponding to $\bt_0$ in $\CB$, and to $s_0$ - in $g$, cf. \eqref{B-lim} and \eqref{gfn-exp}, respectively). The goal of this section is to prove that lower order terms do not contribute to the DTB. Let $\CB_j$ denote the operator, which is obtained by retaining only the $j$-th term of the expansion in \eqref{B-lim}. Select a smooth function $\chi$ that satisfies $\chi(p)\equiv 0$, $|p|\le c$, and $\chi(p)\equiv 1$, $|p|\ge 2c$, for some $c>0$ sufficiently large. Then
\be\label{Bg-split}
\CB g=\CB_0 g_0+\biggl[\sum_{j\ge 1,\bt_j\ge0}\CB_j g+\CB_0(g-(1-\chi) g_0)\biggr]-\CB_0(\chi g_0)+\bigl(\CB-\sum_{j\ge 1,\bt_j\ge0}\CB_j\bigr) g.
\ee
In the previous section we computed the DTB corresponding to the first term on the right in \eqref{Bg-split}. Here we prove that all the other terms do not contribute to the DTB. The first result of this section is that the terms in brackets do not contribute to the DTB.
\begin{lemma}\label{lemma:lot} Let $x_0\in\s$ be generic. Suppose $\CB$ and $g$ are the same as in \eqref{B-oper}-\eqref{B-adtnl} and \eqref{g-smooth}--\eqref{gfn-exp-unif}, respectively, $\kappa_1,\kappa_2\ge0$, and all the assumptions in Section~\ref{sec:main-ass} hold. Suppose, additionally, that 
\begin{enumerate}
\item either $\CB$ has a homogeneous symbol given by the $j$-th term in the expansion \eqref{B-lim}, where $j\ge 1$ and  $\bt_j\ge0$, 
\item or $\CB=\CB_0$ (i.e. $j=0$), and the leading term in \eqref{gfn-exp} identically equals zero (i.e., $a_0(\al)\equiv0$).
\end{enumerate}
Then
\be\label{finally-kappa-all}\begin{split}
&\lim_{\e\to 0}\e^{{\kappa_2}}({\CB_\e} g)(\xe)=0,\ \kappa_2>0,\\
&\lim_{\e\to0}\left[({\CB_\e} g)(\xe)-({\CB_\e} g)(x_0)\right]=0,\ \kappa_2=0.
\end{split}
\ee
\end{lemma}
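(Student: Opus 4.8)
The plan is to reduce both alternatives~(1) and (2) of the lemma to one mechanism: in alternative~(1) the operator $\CB=\CB_j$ carries a symbol of order $\bt_j<\bt_0$, and in alternative~(2) the distribution $g$ has lost its leading singular term so that its leading singularity is of order $s_1-1>s_0-1$; in either case $\CB_\e g$ is \emph{less singular} at $x_0$ than in Theorems~\ref{lead-thrm-pos} and \ref{lead-thrm-zero} by a fixed positive amount $\de:=\min(\bt_0-\bt_1,\,s_1-s_0)>0$. Set $\kappa_2':=\bt_j-s_0-\frac{n-3}2$ in alternative~(1) and $\kappa_2':=\bt_0-s_1-\frac{n-3}2$ in alternative~(2); then $\kappa_2'\le\kappa_2-\de<\kappa_2$. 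First I would split $g=g_{\mathrm{sm}}+g_{\mathrm{sing}}$, where $g_{\mathrm{sm}}\in\coi$ and $g_{\mathrm{sing}}$ is supported in a narrow $p$-band around $p=H(\al)$ and carries the expansion \eqref{gfn-exp-pm}. The term $\CB_\e g_{\mathrm{sm}}$ is the discrete reconstruction of smooth data; by IK1$'$--IK4$'$ the $p$-interpolation of a smooth function converges together with its derivatives, and the backprojection sum is a convergent Riemann sum, so $\CB_\e g_{\mathrm{sm}}\to\CB g_{\mathrm{sm}}$ uniformly near $x_0$ with smooth limit. Hence this contribution is bounded uniformly in $\e$ and $\check x$, and its increment $\CB_\e g_{\mathrm{sm}}(\xe)-\CB_\e g_{\mathrm{sm}}(x_0)\to 0$.

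For $\CB_\e g_{\mathrm{sing}}$ I would repeat the $\Omega_a/\Omega_b$ decomposition of Subsections~\ref{ssec:clt}--\ref{ssec:trans-v2}, replacing $\psi,\Psi$ of \eqref{three-fns} by the analogues built from the lower-order symbol $\la^{\bt_j}$ (resp.\ from $t_\pm^{s_1-1}$). The only new ingredients are versions of Lemmas~\ref{contverpsi} and \ref{psi-asympt} for these analogues, whose proofs are identical. If $\kappa_2'\ge 0$, the $\Omega_a$-sum is a uniformly bounded number of bounded terms and the $\Omega_b$-sum is controlled by $|\al\cdot x_0-H(\al)|\ge c|\al^\perp|^2$ together with the decay of the $\psi$-analogue, giving the crude bound
\be\label{sketch-bound}
\bigl|(\CB_\e g_{\mathrm{sing}})(\xe)\bigr|\le C\,\e^{-\kappa_2'}\bigl(1+\ln(1/\e)\bigr),
\ee
with the logarithm present only in the borderline integer cases; no limit and no equidistribution are needed here, only boundedness. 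If $\kappa_2'<0$, the same estimates show $\CB g_{\mathrm{sing}}$ is continuous in a neighborhood of $x_0$ and $\CB_\e g_{\mathrm{sing}}\to\CB g_{\mathrm{sing}}$ uniformly there.

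Combining the pieces: if $\kappa_2>0$, multiplying \eqref{sketch-bound} by $\e^{\kappa_2}$ yields $O(\e^{\kappa_2-\kappa_2'}\ln(1/\e))=O(\e^{\de}\ln(1/\e))\to 0$, and likewise $\e^{\kappa_2}\CB_\e g_{\mathrm{sm}}(\xe)\to 0$, which is the first line of \eqref{finally-kappa-all}. If $\kappa_2=0$, then necessarily $\kappa_2'=-(\bt_0-\bt_j)<0$ in alternative~(1) and $\kappa_2'=-(s_1-s_0)<0$ in alternative~(2), so $\CB g$ is continuous near $x_0$, its discrete version converges uniformly, and together with continuity of the limit this gives $(\CB_\e g)(\xe)-(\CB_\e g)(x_0)\to 0$, the second line of \eqref{finally-kappa-all}. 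I expect genericity of $x_0$ to be inessential for this lemma — a bounded number of bounded terms on $\Omega_a$ is bounded regardless of equidistribution — and to be retained only so that the hypotheses match those of the surrounding results.

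The hard part will be, first, verifying that Lemmas~\ref{contverpsi} and \ref{psi-asympt} (and the supporting estimates of Appendices~\ref{sec:contverpsi} and \ref{sec:psi-asympt}) carry over to every admissible lower-order symbol $\la^{\bt_j}$, $\bt_j\ge 0$, and to $t_\pm^{s_1-1}$, with honest bookkeeping of the special cases $\bt_j\in\mathbb N$ and $\bt_j-s_0\in\mathbb Z$ where logarithmic factors appear; here IK1$'$--IK3$'$ (exactness to degree $L_\bt\ge\bt_0$ and $\ik^{(l)}\in L^\infty$ up to $l=\bt_0+1$) are exactly what produce the required decay of the $\psi$-analogues, and when $\bt_j$ is so small that the corresponding $\psi$-series diverges one must instead argue with the $p$-truncated $g_{\mathrm{sing}}$ directly. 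Second, one must justify the uniform convergence $\CB_\e h\to\CB h$ near $x_0$ when $h$ is only finitely differentiable across $p=H(\al)$ (the regime $\kappa_2'<0$) — i.e.\ control how the interpolation error of $\ik$ interacts with a jump in a derivative of $h$ across the singular surface. This is the analogue of estimate \eqref{less-sing} from the Lambda-tomography analysis, and it is the only step that is not a routine rerun of Section~\ref{sec:conn}.
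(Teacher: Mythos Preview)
Your overall strategy --- bound $(\CB_\e g)(\xe)$ by $C\e^{-\kappa_2'}$ with $\kappa_2'<\kappa_2$ and then multiply by $\e^{\kappa_2}$ --- is exactly what the paper does, and your remark that genericity is not actually used in this lemma is correct. But the execution differs in a way that matters.

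You propose rebuilding $\psi,\Psi$ from \eqref{three-fns} with the lower-order inputs $\la^{\bt_j}$ or $t_\pm^{s_1-1}$ and then invoking analogues of Lemmas~\ref{contverpsi} and~\ref{psi-asympt}. This works for the exact leading term $a_0(\al)p^{s_0-1}$ of $g$ in alternative~(1), but the remainder of $g$ (and \emph{all} of $g$ in alternative~(2)) is only $O(p^{s_1-1})$ by \eqref{gfn-exp}--\eqref{gfn-exp-unif}; no further term $a_1(\al)p^{s_1-1}$ is assumed. The $\psi$-machinery of Section~\ref{sec:conn} is built on the exact form $\CA(t)=a_+t_+^{s-1}+a_-t_-^{s-1}$, so the big-$O$ remainder cannot be fed into it literally. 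Your acknowledged ``hard part'' is therefore not a border case but the generic situation.

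The paper sidesteps this by abandoning $\psi$ altogether for the present lemma. It writes $G_{\check x}(\al)=\sum_j(\CBa\ik_\e)(\al\cdot\xe-\e j)\,g(\al,\e j)$, introduces the auxiliary interpolant $F_\al(t)=\sum_j\ik_\e((H(\al)+t)-\e j)\,g(\al,\e j)$ (interpolation of $g$ \emph{before} $\CBa$ acts), and proves the pointwise bounds $F_\al^{(l)}(t)=O(|t|^{s'-1-l})$ using only the big-$O$ hypotheses \eqref{gfn-exp-unif} on $g$ together with IK1$'$--IK3$'$. A separate Lemma~\ref{G-al-est} then bounds $G(\al)$ on $\Omega_b$ case-by-case ($\CBa$ local; nonlocal with $\bt'\notin\mathbb Z$; nonlocal with $\bt'\in\mathbb Z$), and the $\Omega_a$ contribution is handled by a crude count. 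No exact power law for $g$ is ever needed.

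For $\kappa_2=0$ the paper does \emph{not} argue via uniform convergence of $\CB_\e g$ to a continuous $\CB g$, as you propose. It estimates the increment $G_{\check x}(\al)-G_0(\al)$ directly, gaining a factor $\e$ from the difference (this is why \eqref{gfn-exp-unif} goes up to $L_\bt+1$ derivatives), and then sums over $\Omega_a\cup\Omega_b$. Your route would require precisely the uniform-convergence statement you flag as delicate; the paper avoids it by never asserting convergence to $(\CB g)(x_0)$, only that the $\check x$-dependence disappears in the limit.
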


\begin{proof}[Proof of Lemma~\ref{lemma:lot}.]
Let $\bt'=\bt_j$ denote the order of the term that makes up $\CB$, and $s'$ denote the order of the first term in \eqref{gfn-exp} that is not identically zero (i.e., $s'=s_0$ or $s_1$). By assumption $\bt'-s'<\bt_0-s_0$. 
By \eqref{g-smooth}--\eqref{gfn-exp-unif},
\be\label{Oa_st1-g}\begin{split}
&|g(\al,H(\al)+p)|\le c_0|p|^{{{s'}}-1},\ (\al,p)\in\Omega\times\br;\\
&g(\al,p)\equiv 0,\  \al\not\in\Omega\cup(-\Omega)\text{ or }|p|\ge c;\\
&|\pa_p^l g(\al,H(\al)+p)|\le c_l |p|^{{{s'}}-1-l},\ 0<l\le L_\bt+1,\ p\not=0,\al\in\Omega,
\end{split}
\ee
for some $c_l>0$. The above inequalities hold even if ${s'}$ is an integer.
By Lemma~\ref{contverpsi}, rescaling the affine variable $t\to t/\e$ gives
\be\label{Oa_st1-bk}
|{\CBa}\ik_\e(\al,t)|\le |\tilde B_j(\al)|\frac{O(\e^{-{{\bt'}}})}{1+|t/\e|^{{{\bt'}}+1}},\ \al\in\Omega,\ \ik_\e(t):=\ik(t/\e).
\ee

Set
\be\label{G-def}
G_{\xc}(\al):=\sum_j({\CBa} \ik_\e)(\al\cdot \xe-\e j) g(\al,\e j),\ \vartheta:=s'-1-\bt'.
\ee
In what follows, $\xc$ in the definition of $\xe$ is fixed and is omitted from notations. First we estimate $G(\al)$, $\al\in\Omega_a$. By \eqref{Oa_st1-bk} and \eqref{Oa_st1-g} with $l=0$,
\be\label{Oa_st2}\begin{split}
G(\al) & =O(\e^\vartheta)
 \sum_{|j|\le O(1/\e)} \frac{|j-\frac{H(\al)}\e|^{{{s'}}-1}}{1+|\frac{\al\cdot \xe}\e-j|^{{{\bt'}}+1}}\\
& = O(\e^\vartheta)
 \sum_{|j|\le O(1/\e)} \frac{|j+(p/\e)|^{{{s'}}-1}}{1+|j|^{{{\bt'}}+1}},\ 
 p=p(\al):=\al\cdot \xe-H(\al).
\end{split}
\ee
The condition $\al\in\Omega_a$ implies that $|p(\al)|/\e$ is bounded, so
\be\label{Ja_st3}
G(\al)= O(1)+O(\e^\vartheta),\ \al\in\Omega_a.
\ee
To estimate the sum $\sum_{\al_k\in\Omega_a} G(\al_k)|\Delta\al_{\vec k}|$, integrate over the $(n-1)$-dimensional ball of radius $O(\e^{1/2})$ to obtain the factor $O(\e^{(n-1)/2})$. Therefore
\be\label{Oa_st2-int}\begin{split}
\sum_{\al_{\vec k}\in\Omega_a} G(\al_{\vec k})|\Delta\al_{\vec k}|
=O(\e^{(n-1)/2})+O(\e^{{{s'}}+(n-3)/2-{{\bt'}}}).
\end{split}
\ee

For convenience, define 
\be\label{gr-fn}
\Psi(p):=\begin{cases} p^{\vartheta},&\vartheta<0,\\ \ln(1/p),& \vartheta=0,\\ 1,&\vartheta>0,
\end{cases}\quad p>0.
\ee
An estimate of $G(\al)$, $\al\in\Omega_b$, is contained in the following lemma, which is proven in Appendix~\ref{sec:prflem}. 

\begin{lemma}\label{G-al-est} Suppose all the assumptions of Lemma~\ref{lemma:lot} are satisfied. Let $G$ be defined as in \eqref{G-def}. If ${\CBa}$ is local, then 
\be\label{B-loc-in-lem}
G(\al)=O(p(\al)^\vartheta),\ \al\in\Omega_b.
\ee
If ${\CBa}$ is non-local, then 
\be\label{G-est-II-and-III}
G(\al) = O(\Psi(p(\al))),\ \al\in\Omega_b.
\ee
\end{lemma}

As $p(\al)$, $\al\in\Omega$, is bounded, the essence of estimates \eqref{B-loc-in-lem}, \eqref{G-est-II-and-III} is to control the behavior of $G(\al)$ for $\al$ close to $\dir_0$, i.e. when $p(\al)$ is small.

Since (1) $\check H''(\dir_0)$ is negative definite, (2) $\Omega_b$ can be as small as we like (but of finite size), and (3) $|\xe-x_0|=O(\e)$,
there exists $c>0$ such that $p(\al)\ge c|\al^\perp|^2$, $\al\in\Omega_b$. Recall that $\al^\perp$ is the orthogonal projection of $\al$ onto $\dir_0^\perp$. In particular, we can assume that $p(\al)>0$ in Lemma~\ref{G-al-est}, and not use absolute value bars inside the big-$O$ terms in \eqref{B-loc-in-lem}, \eqref{G-est-II-and-III}.

Suppose first that $\vartheta<0$. To estimate the contribution of directions $\al_k\in\Omega_b$ to $(\CB_\e g)(\xe)$,  replace the sum over $\al_k\in\Omega_b$ by the integral over $\Omega_b$, replace $p(\al)$ by $cr^2$, where $r$ is the radial variable in the plane $\dir_0^\perp$, and use \eqref{B-loc-in-lem}, \eqref{G-est-II-and-III} to get
\be\label{B-loc_st2}\begin{split}
\sum_{\al_{\vec k}\in\Omega_b}G(\al_{\vec k})|\Delta\al_{\vec k}|
 &=\int_{O(\e^{1/2})}^{O(1)} O(r^{2\vartheta})r^{n-2}dr=O(1)+
 O(\e^{{{s'}}+\frac{n-3}2-{{\bt'}}}).
\end{split}
\ee
Combining \eqref{Oa_st2-int} and \eqref{B-loc_st2} gives
\be\label{Oa_st2_last}\begin{split}
&({\CB_\e} g)(\xe)=\sum_{\al_{\vec k}\in\Omega_a\cup\Omega_b}G(\al_{\vec k})|\Delta\al_{\vec k}|
=O(1)+O(\e^{{{s'}}+\frac{n-3}2-{\bt'}}).
\end{split}
\ee
Therefore,
\be\label{almost-finally}
\e^{{\kappa_2}}({\CB_\e} g)(\xe)=O(\e^{\kappa_2})+O(\e^{(\bt_0-s_0)-({\bt'}-{s'})})\to0,\
\e\to0,
\ee
if $\kappa_2>0$. The other two cases, $\vartheta=0$ and $\vartheta>0$, can be considered similarly, and the result is that $\e^{{\kappa_2}}({\CB_\e} g)(\xe)\to0$ if $\kappa_2>0$ holds there as well. 

The proof of Lemma~\ref{lemma:lot} in the case $\kappa_2=0$ is more involved and is given in Appendix~\ref{sec:prf-kappa2-zero}. 
\end{proof}

The final result of this section is that the last two terms on the right in \eqref{Bg-split} do not contribute to the DTB.
\begin{lemma}\label{lem:lot-2} Suppose all the assumptions of Lemma~\ref{lemma:lot} are satisfied, and $g_0$ is as in \eqref{gfn-def}. The last two terms on the right in \eqref{Bg-split} do not contribute to the DTB, i.e. the result of computing these terms from discrete data satisfies relations analogous to \eqref{finally-kappa-all}.
\end{lemma}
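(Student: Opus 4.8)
The plan is to show that the two terms $-\CB_0(\chi g_0)$ and $\bigl(\CB-\sum_{j\ge1,\bt_j\ge0}\CB_j\bigr)g$ in \eqref{Bg-split}, when discretized and evaluated at $\xe=x_0+\e\check x$, produce contributions that vanish in the sense of \eqref{finally-kappa-all}. The strategy is to reduce each term to a situation already covered by Lemma~\ref{lemma:lot} (or to an even more benign situation), exploiting the two mechanisms available there: either the operator carries a strictly lower-order symbol, or the distribution it acts on has a strictly milder singularity along $p=H(\al)$.

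First I would treat $\bigl(\CB-\sum_{j\ge1,\bt_j\ge0}\CB_j\bigr)g$. This operator is, by construction, $\CB$ with all its homogeneous terms of order $\bt_j\ge0$ removed; what remains has symbol $\tilde B(\al,\la)-\sum_{j\ge1,\bt_j\ge0}\la^{\bt_j}\tilde B_j(\al)$, whose asymptotic expansion as $\la\to+\infty$ starts at some order $\bt'<0$ (the first $\bt_j<0$), with the tail estimate inherited termwise from \eqref{B-lim}. Since $\bt'<0\le\bt_0$, and since the paired distribution $g$ still has singularity order $s_0$, we have $\bt'-s_0<\bt_0-s_0$, exactly the hypothesis of Lemma~\ref{lemma:lot}(1). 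The only subtlety is that this remainder operator is not literally one of the homogeneous pieces $\CB_j$ but an infinite sum; however, the proof of Lemma~\ref{lemma:lot} uses only the uniform decay bound on $\CBa\ik$ of the form \eqref{Oa_st1-bk}, which holds for the remainder symbol with exponent $\bt'+1$. One can either rerun that argument verbatim with $\tilde B_j$ replaced by the (bounded, smooth in $\al$) coefficient of the remainder, or split off finitely many terms $\CB_j$ with $\bt_j<0$ (each handled by Lemma~\ref{lemma:lot}) plus a final tail that is even more regular. Either way the contribution is $o(\e^{-\kappa_2})$, and one gets \eqref{finally-kappa-all}.

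Next I would treat $-\CB_0(\chi g_0)$. Here the operator is the leading term $\CB_0$, so we must use mechanism (2): the distribution $\chi g_0$ must be shown to have no singularity of order $s_0$ along $p=H(\al)$ near the relevant directions. This is immediate: $g_0(\al,H(\al)+p)=a_0(\al)p_+^{s-1}+a_0(-\al)p_-^{s-1}$ is, up to the support cutoff in $\al$, singular only at $p=H(\al)$, whereas $\chi$ vanishes on a neighborhood $|p|\le c$ of $p=H(\al)$ once $c$ is chosen larger than $\sup_{\al\in\Omega}|H(\al)|$ — recall $\chi(p)\equiv0$ for $|p|\le c$. Hence $\chi g_0$ is actually \emph{smooth} (indeed identically zero near $p=H(\al)$), so its leading singular coefficient is $a_0\equiv0$ in the sense of Lemma~\ref{lemma:lot}(2), and moreover $\chi g_0$ still satisfies the support and regularity bounds \eqref{Oa_st1-g} with $s'$ arbitrarily large. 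Applying Lemma~\ref{lemma:lot} (case (2), with $s'$ large) gives that the discretized $\CB_0(\chi g_0)$ contributes nothing to the DTB, in both the $\kappa_2>0$ and $\kappa_2=0$ cases. One should note that $\chi g_0$ fails to be compactly supported in $p$ (just like $g_0$), but this is harmless: the backprojection is localized by the cutoff $\chi(\al)$ and by the fact that $\CBa\ik$ decays, exactly as in the treatment of $g_0$ in Section~\ref{ssec:clt}; alternatively one may replace $\chi g_0$ by $\chi g_0$ times a fixed cutoff in $p$, the difference being genuinely smooth and bounded.

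The main obstacle is bookkeeping rather than a new idea: one must verify that the remainder operator $\CB-\sum_{j\ge1,\bt_j\ge0}\CB_j$ genuinely satisfies the hypotheses Lemma~\ref{lemma:lot} was proved under, in particular that its symbol admits a lower-order homogeneous expansion with uniform (in $\al$) remainder bounds and that, when $\kappa_2=0$, the delicate argument in Appendix~\ref{sec:prf-kappa2-zero} still applies with $\bt'$ in place of $\bt_0$ (where now $\bt'-s_0<0=\kappa_2$, so the relevant sums are even more strongly convergent and the "$({\CB_\e}g)(\xe)-({\CB_\e}g)(x_0)\to0$" conclusion follows a fortiori). I would also remark explicitly that the cutoff function $\chi$ in \eqref{Bg-split} is the one introduced just before \eqref{Bg-split} (depending on $p$), not the angular cutoff $\chi(\al)$, and that $c$ there is chosen so that $\{|p|\le c\}$ contains $\{(\al,H(\al)):\al\in\Omega\cup(-\Omega)\}$; with that choice the argument above is complete.
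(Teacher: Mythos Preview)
Your strategy---reduce both terms to Lemma~\ref{lemma:lot}---is different from the paper's, and while the intuition is reasonable, there are genuine gaps that are not mere bookkeeping.

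For $\CB_0(\chi g_0)$: Lemma~\ref{lemma:lot} assumes the distribution satisfies the standing hypotheses of Section~\ref{sec:main-ass}, in particular compact support in $p$ (see \eqref{g-locsup} and its use in \eqref{Oa_st1-g}). Your proposed fix of multiplying by a further cutoff in $p$ contains an error: the discarded tail $\chi g_0\cdot(1-\text{cutoff})$ is smooth but \emph{not} bounded---it grows like $|p|^{s-1}$---so it cannot be dismissed as ``genuinely smooth and bounded''. The paper avoids Lemma~\ref{lemma:lot} here entirely and argues directly: since $\chi$ removes the singularity of $g_0$, the interpolated sum $\sum_j(\CBa\ik_\e)(\al\cdot\xe-p_j)\chi(p_j)g_0(\al,p_j)$ is absolutely and uniformly convergent (the decay $(\CBa\ik)(t)=O(|t|^{-(\bt+1)})$ beats the growth $|p|^{s-1}$ since $\bt-s+1>0$), and converges as $\e\to0$ to the smooth integral $\int B(\al,\al\cdot x_0-p)\chi(p)g_0(\al,p)\,dp$ because $\ik$ is exact to degree $L_\bt$. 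The limit is bounded and independent of $\check x$.

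For the remainder operator: Lemma~\ref{lemma:lot}(1) explicitly requires $\bt_j\ge0$, and this is not cosmetic. The proof in Appendix~\ref{sec:prflem} relies on the kernel bound \eqref{Oa_st1-bk}, whose derivation and subsequent use break down when $\bt'<0$ (the decay exponent $\bt'+1$ may be $\le0$, and the case analysis local/nonlocal with $k=\lceil\bt'\rceil$, $\nu=\{\bt'\}$ is written for $\bt'\ge0$). Your alternative---peel off finitely many $\CB_j$ with $\bt_j<0$---runs into the same obstruction. The paper again bypasses Lemma~\ref{lemma:lot}: once every term with $\bt_j\ge0$ is removed, the one-dimensional operator $\CBa$ is smoothing of positive order; since $g$ is continuous ($s_0>3/2$), $\CBa g$ is continuous, and an easy calculation gives $\lim_{\e\to0}(\CB_\e g)(\xe)=(\CB g)(x_0)$ independently of $\check x$.

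In summary, the paper treats these two terms by elementary convergence arguments (smooth integrand, respectively smoothing operator, so discrete $\to$ continuous), which is both shorter and avoids the technical issues above.
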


\begin{proof}
By the argument following \eqref{three-fns}, the series
\be\label{G-def-remder}
G_{\xc}(\al)=\sum_j({\CBa} \ik_\e)(\al\cdot \xe-p_j) \chi(p_j)g_0(\al,p_j)
\ee
converges absolutely and uniformly in $\al\in\Omega$. Moreover, 
\be\label{G-conv}
\lim_{\e\to0}G_{\xc}(\al)=\int B(\al,\al\cdot x_0-p) \chi(p)g_0(\al,p)dp,
\ee
because $B(\al,\al\cdot \xe-p)\chi(p)$ is smooth, and $\ik$ is exact to the degree $L_\bt$. This implies that $\lim_{\e\to0}({\CB_\e} (\chi g_0))(\xe)$ exists, is independent of $\check x$, and bounded, i.e. $\chi g_0$ does not contribute to the DTB.

The final term to consider is $\CB_\e g$, where $g$ may have a non-zero leading term in \eqref{gfn-exp}, but $\CB$ is such that (1) all the terms in the expansion \eqref{B-lim} with $\bt_j\ge 0$ are identically zero, and (2) we do not assume that $\tilde B(\al,\la)$ is a homogeneous function of $\la$. In this case, the operator $\CB_{1d}$ is smoothing of finite degree. With $g$ being continuous (recall that $s_0>1$ in \eqref{gfn-exp}, cf. \eqref{kappa-assns}), the function $\CB_{1d}g$ is continuous. By an easy calculation we get that $\lim_{\e\to0}(\CB_\e g)(\xe)=(\CB g)(x_0)$, so $\CB_\e g$ does not contribute to the DTB as well.
\end{proof}

Combining Theorems~\ref{lead-thrm-pos}, \ref{lead-thrm-zero} and Lemmas~\ref{lemma:lot}, \ref{lem:lot-2} proves our main result.

\begin{theorem}\label{thrm:lot-main} Let $x_0\in\s$ be generic. Suppose $\CB$ and $g$ are the same as in \eqref{B-oper}-\eqref{B-adtnl} and \eqref{g-smooth}--\eqref{gfn-exp-unif}, \eqref{c-ratios-req}, respectively, $\kappa_1,\kappa_2\ge0$, and all the assumptions in Section~\ref{sec:main-ass} hold. One has
\be\label{fe-lim-simple-final}
\lim_{\e\to0}\e^{{\kappa_2}}(\CB_\e g)(\xe)=(\ik*\mu)(h) \text{ if } \kappa_2>0,
\ee
where $\mu$ is given by \eqref{mu-in-thrm}. Also, for some $c_\e$,
\be\label{fe-lim-simple-v2-final}
\lim_{\e\to0}\left[(\CB_\e g)(\xe)-c_\e\right]=(\ik*\mu)(h) \text{ if } \kappa_2=0,
\ee
where $\mu$ is given by \eqref{mu-in-thrm-k20}. The quantity $c_\e$ depends on $\e$, but is independent of $\check x$. 

If there exists a function $f\in L^1(\br^n)$ such that its Radon transform $\hat f(\al,p)$ satisfies \eqref{g-smooth}--\eqref{gfn-exp-unif}, \eqref{c-ratios-req} and the leading terms of the expansions of $\hat f$ and $g$ coincide, then $\mu$ is given by \eqref{mu-def-1}, \eqref{A-def} if $\kappa_2>0$, and by \eqref{new-mup} -- if $\kappa_2=0$.
\end{theorem}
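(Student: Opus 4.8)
The plan is to assemble Theorems~\ref{lead-thrm-pos} and~\ref{lead-thrm-zero} with Lemmas~\ref{lemma:lot} and~\ref{lem:lot-2} by means of the decomposition \eqref{Bg-split}. Since the discrete operator $\CB_\e$ of \eqref{CBe-def} is linear in the sampled data $g(\al_{\vec k},p_j)$, the splitting \eqref{Bg-split} of $\CB g$ induces a corresponding splitting of $(\CB_\e g)(\xe)$ into: (i) the leading piece, i.e.\ the discrete version of $\CB_0 g_0$; (ii) the two bracketed terms; and (iii) the last two terms of \eqref{Bg-split}, each discretized in the same fashion. The only analytic point to settle at this stage is that the discrete operators applied to the non-compactly-supported $g_0$ produce absolutely convergent series; this is exactly the estimate recorded just after \eqref{three-fns}, which uses $\bt-s+1>(n-1)/2>0$.

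For the leading piece I would invoke Theorem~\ref{lead-thrm-pos} when $\kappa_2>0$ and Theorem~\ref{lead-thrm-zero} when $\kappa_2=0$, applied to $\CB_0$ and $g_0$. All of their hypotheses are in force: $\CB_0$ has the homogeneous symbol $\la^{\bt}\tilde B_0(\al)$ allowed there with $\tilde B_0\in C^\infty(S^{n-1})$; $g_0$ has the required form \eqref{gfn-def} by assumption~(4) about $g$ in Section~\ref{sec:main-ass}; the inequality $s\ge(n+1)/2$ and the negative-definiteness of $\check H''$ on $\Omega$ follow from \eqref{kappa-assns} and the assumptions on $H$; the point $x_0$ is generic by hypothesis; and \eqref{c-ratios-req} is assumed (it is needed only when $\kappa_2=0$). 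This yields $(\ik*\mu)(h)$ with $\mu$ as in \eqref{mu-in-thrm} when $\kappa_2>0$ and as in \eqref{mu-in-thrm-k20} when $\kappa_2=0$.

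Next I would verify that every remaining term of the split contributes nothing to the DTB. Each summand $\CB_j g$ with $j\ge1$ and $\bt_j\ge0$ is covered by case~(1) of Lemma~\ref{lemma:lot}. The term $\CB_0\bigl(g-(1-\chi)g_0\bigr)$ is covered by case~(2): choosing $c$ large enough that $1-\chi\equiv1$ on a neighborhood of $\{H(\al):\al\in\Omega\cup(-\Omega)\}$ (possible since $\Omega$ is small and $H$ is continuous), the distribution $g-(1-\chi)g_0$ coincides with $g-g_0$ near its singular support $p=H(\al)$, so its leading coefficient of order $s_0-1$ vanishes identically; one also checks that it is even, compactly supported, and smooth off $p=H(\al)$, so the assumptions of Section~\ref{sec:main-ass} hold for it with $a_0\equiv0$ and $s'=s_1$. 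The remaining two terms of \eqref{Bg-split} are handled directly by Lemma~\ref{lem:lot-2}. In every one of these cases \eqref{finally-kappa-all} applies: for $\kappa_2>0$ the term, multiplied by $\e^{\kappa_2}$, tends to $0$; for $\kappa_2=0$ the term $T_\e$ satisfies $T_\e(\xe)-T_\e(x_0)\to0$.

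Summation then finishes the proof. For $\kappa_2>0$ only the leading piece survives the $\e^{\kappa_2}$-scaling, which gives \eqref{fe-lim-simple-final}. For $\kappa_2=0$ I would write each non-leading term as $T_\e(\xe)=T_\e(x_0)+o(1)$, set $c_\e$ equal to the sum of all the $T_\e(x_0)$---a quantity independent of $\check x$---and add the leading piece to obtain \eqref{fe-lim-simple-v2-final}. Finally, in the reconstruction case, Lemma~\ref{lem:fhsing} identifies the leading term of $\hat f(\al,p)$ at $p=H(\al)$ as the $\hat f_0$ of \eqref{lim-res-3}; since by hypothesis the leading terms of $\hat f$ and $g$ coincide, we have $g_0=\hat f_0$, and the specializations already recorded in Theorems~\ref{lead-thrm-pos} and~\ref{lead-thrm-zero} give $\mu$ as in \eqref{mu-def-1}, \eqref{A-def} when $\kappa_2>0$ and as in \eqref{new-mup} when $\kappa_2=0$. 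The step I expect to be most delicate is the bookkeeping of \eqref{Bg-split}: checking that $g-(1-\chi)g_0$ genuinely satisfies all the structural hypotheses (evenness, compact support, smoothness away from $p=H(\al)$, vanishing leading coefficient) needed to apply case~(2) of Lemma~\ref{lemma:lot}, and keeping precise track of which $\check x$-independent constants are absorbed into $c_\e$ when $\kappa_2=0$; no new hard estimates are required, since everything analytic is contained in the four cited results.
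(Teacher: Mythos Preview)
Your proposal is correct and follows essentially the same approach as the paper, which states the proof in a single sentence: ``Combining Theorems~\ref{lead-thrm-pos}, \ref{lead-thrm-zero} and Lemmas~\ref{lemma:lot}, \ref{lem:lot-2} proves our main result.'' Your write-up simply fills in the bookkeeping of the decomposition \eqref{Bg-split} that the paper leaves implicit, including the verification that $g-(1-\chi)g_0$ satisfies the hypotheses of case~(2) of Lemma~\ref{lemma:lot} and the handling of the $\check x$-independent constants absorbed into $c_\e$ when $\kappa_2=0$.
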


{To put it simply, Theorem~\ref{thrm:lot-main} asserts that the DTB of a reconstruction equals to a suitably rescaled convolution of the interpolation kernel $\ik$ and the corresponding CTB $\mu$.}

\section{Accounting for finite detector pixel size}\label{sec:pixsize}

In the idealized model of a tomographic experiment the conventional assumption is that data represents discrete values of the Radon transform $\hat f(\al_k,p_j)$, where $p_j$ is the center of the $j$-th detector pixel. A more accurate model is that the data are the values of $\hat f(\al,p)$ averaged over the area of each pixel:
\be\label{sm-data}
\hat f_\nu(\al_{\vec k},p_j):=\int \frac1\e \nu\left(\frac{p_j-p}\e\right) \hat f(\al_{\vec k},p)dp.
\ee
Here $\nu$ is a sufficiently smooth compactly supported function, which models the detector response. This function is normalized, i.e. $\int\nu(p)dp=1$. Similarly, in a more general case we can assume that the data are $g_\nu=\nu*g$, where the convolution is in $p$.
Fortunately, all the main results and conclusions obtained in the previous sections still apply. More precisely, Theorem~\ref{thrm:lot-main} (and Lemma~\ref{lem-phi-lim} as a particular case of Theorem~\ref{thrm:lot-main}) still holds after a simple modification. The only difference is that $\ik$ is replaced by $\ik*\nu$ in \eqref{fe-lim-simple} and \eqref{fe-lim-simple-v2}.

Indeed, a simple calculation shows that the analogues of $\psi$ and $\Psi$ in \eqref{three-fns} become
\be\label{new-psi}
\psi_\nu(t,p):=\int \psi(t,q)\nu(q-p)dq,\ \Psi_\nu(t-p):=\int \Psi(t-q)\nu(q-p)dq.
\ee
Lemmas~\ref{contverpsi}, \ref{psi-asympt} apply to $\Psi_\nu$ and $\psi_\nu$ without changes.
Indeed, in the proof of Lemma~\ref{contverpsi} the properties of $\ik$ that we used were that $\ik$ is sufficiently smooth, compactly supported, and $\tilde\ik(0)=1$. Clearly, $\ik*\nu$ has all of these  properties. To prove the statement about $\psi_\nu$ we follow the proof of Lemma~\ref{psi-asympt}, but replace $\CA$ with $\CA*\nu$.
Hence the derivation \eqref{newlt-1}--\eqref{uzf-int} works with $\psi$ replaced by $\psi_\nu$. The analogue of \eqref{uzf-int} becomes
\be\label{uzf-int-v2}
J=\frac12\int \Psi_\nu(h+t) t_+^{(n-3)/2}dt.
\ee
Thus, the only modification to \eqref{J-ft} is to insert the factor $\tilde\nu(\la)={\mathcal F}\nu$ in the integrand, and the desired assertion is obvious. Similar modifications are done in the case $\kappa_2=0$.

Since smoothing the data increases smoothness of the reconstruction, it is clear that Theorem~\ref{thrm:lot-main} holds when $\hat f$ is smoothed as well.

In a more specific case of 2D LT, the conclusions in Sections~\ref{sec:LTline} and \ref{sec:LTremote} hold also. In \eqref{lt-line-nosm}, \eqref{lt-line-st2}, we replace 
\be\label{phi-repl}
\phi(t,\cdot,\cdot) \to \phi_\nu(t,\cdot,\cdot):= \int \nu(t-p)\phi(p,\cdot,\cdot) dp.
\ee
In \eqref{remsing-3}, we replace $\psi$ with $\psi_\nu$ (cf. \eqref{new-psi}). Qualitatively, everything remains the same. The line artifact from a straight edge in $\text{singsupp}(f)$ is of strength $O(1/\e)$, and the oscillations in $\lte$ away from $\text{singsupp}(f)$ are of magnitude $O(\e^{-1/2})$ even with data smoothing. To change the conclusions qualitatively, the smoothing should be on a scale $\de$ such that $\de/\e\to\infty$, $\e\to0$.

\section{Numerical Experiments}\label{sec:numerix}

In all the experiments below that use a 2D reconstruction grid, the latter is of size $1001 \times 1001$ and covers the square $[-L,L]\times [-L,L]$ with $L=5$. The Radon data are given at the points 
\be\label{angles}\begin{split}
\al_k&=\Delta\al(k+\sqrt 2),\ \Delta\al=2\pi/n_0;\\
p_j&=-p_{\text{max}}+j\Delta p,\ \Delta p = \e=2p_{\text{max}} / n_0,\ 
p_{\text{max}} = 1.1L\sqrt 2. 
\end{split}
\ee
The shift $\sqrt 2$ in the formula for $\al_k$ is introduced to avoid any special angles. The coefficient 1.1 when computing $p_{\text{max}}$ is introduced to ensure that the data cover a region slightly larger than the selected reconstruction area.

As the kernel $\ik$ that satisfies conditions IK1--IK4 in Section~\ref{ltwosm} we used the function (cf. \cite{kat19a}):
\be\label{final-form}
\ik(t)=0.5(B_3(t)+B_3(t-2))+4B_3(t-1)-2(B_4(t)+B_4(t-1)).
\ee  
Here $B_n$ denotes the cardinal B-spline of degree $n$ supported on $[0,n+1]$.

\begin{figure}[h]
{\centerline{
{\hbox{
{\epsfig{file=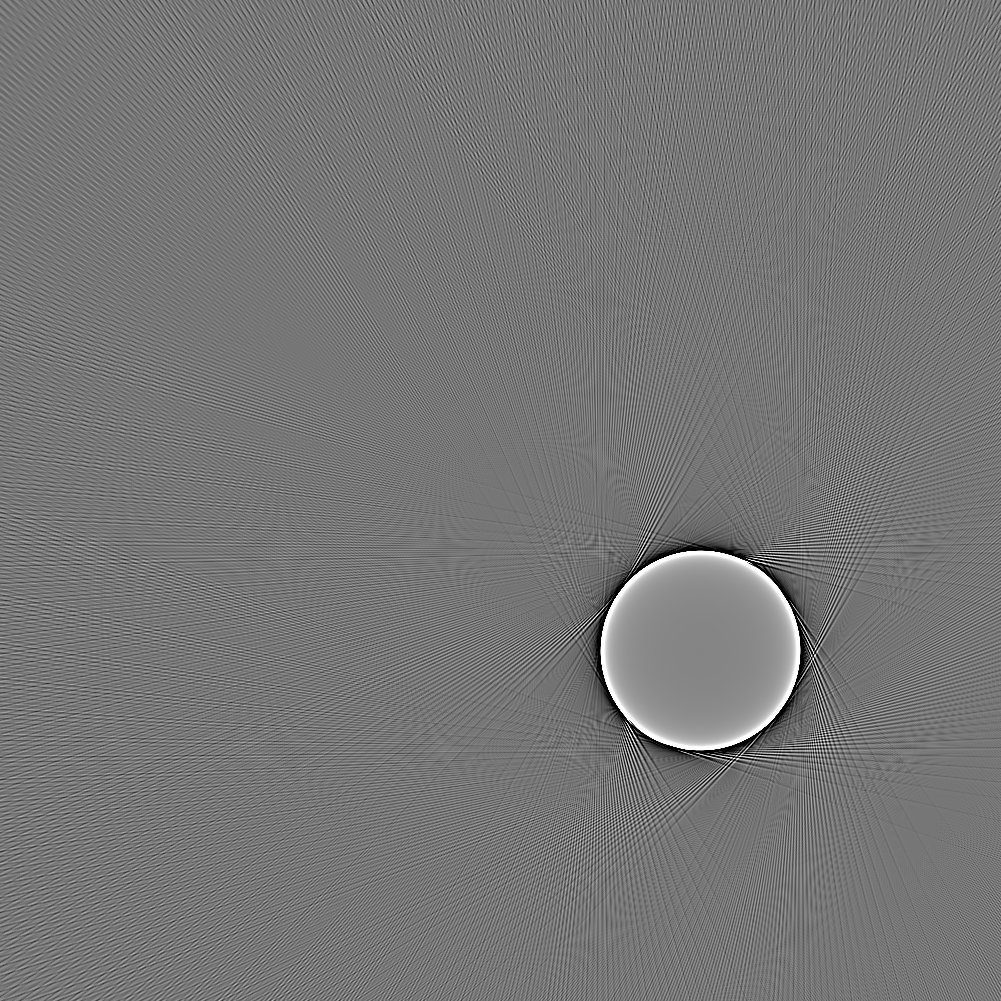, width=5cm}}
{\epsfig{file=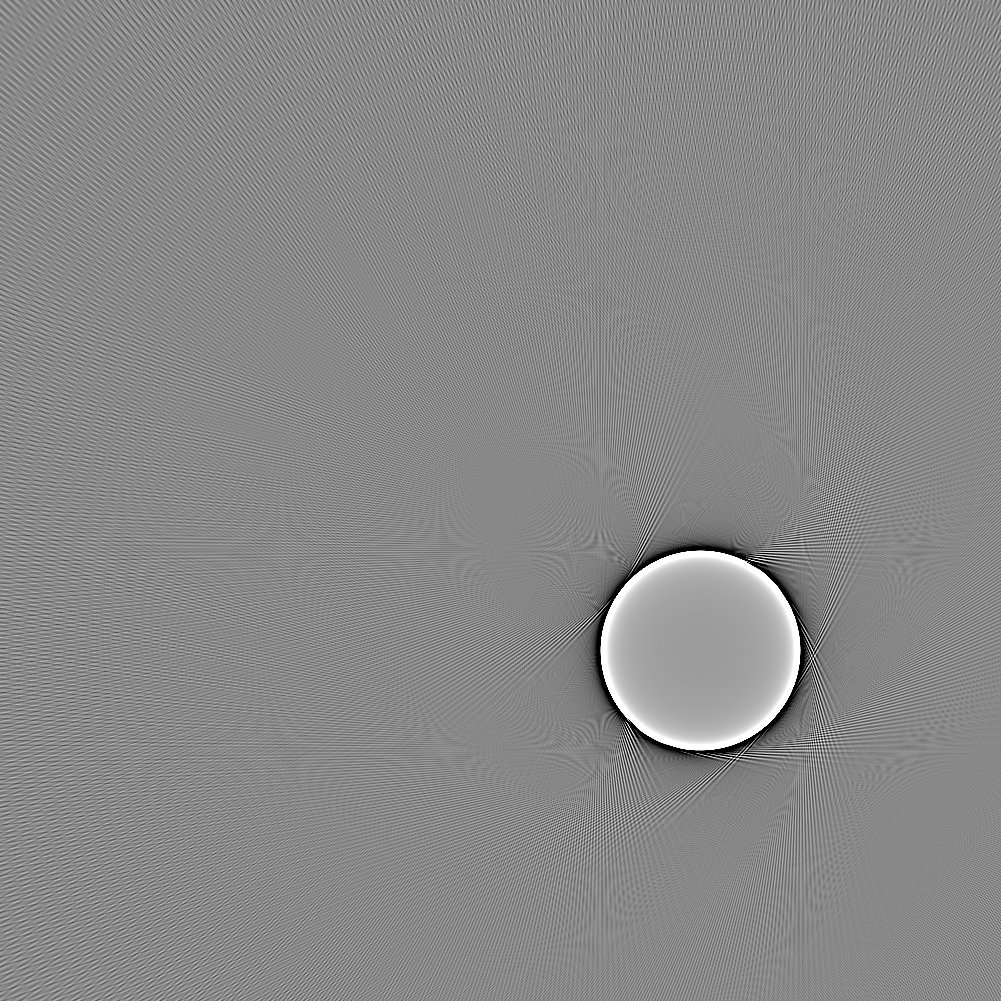, width=5cm}}
}}}}
\caption{Reconstructed $\lte$, $n_0=1000$. Left panel: without data smoothing, right panel: with data smoothing.}
\label{fig:1000}
\end{figure}

\begin{figure}[h]
{\centerline{
{\hbox{
{\epsfig{file=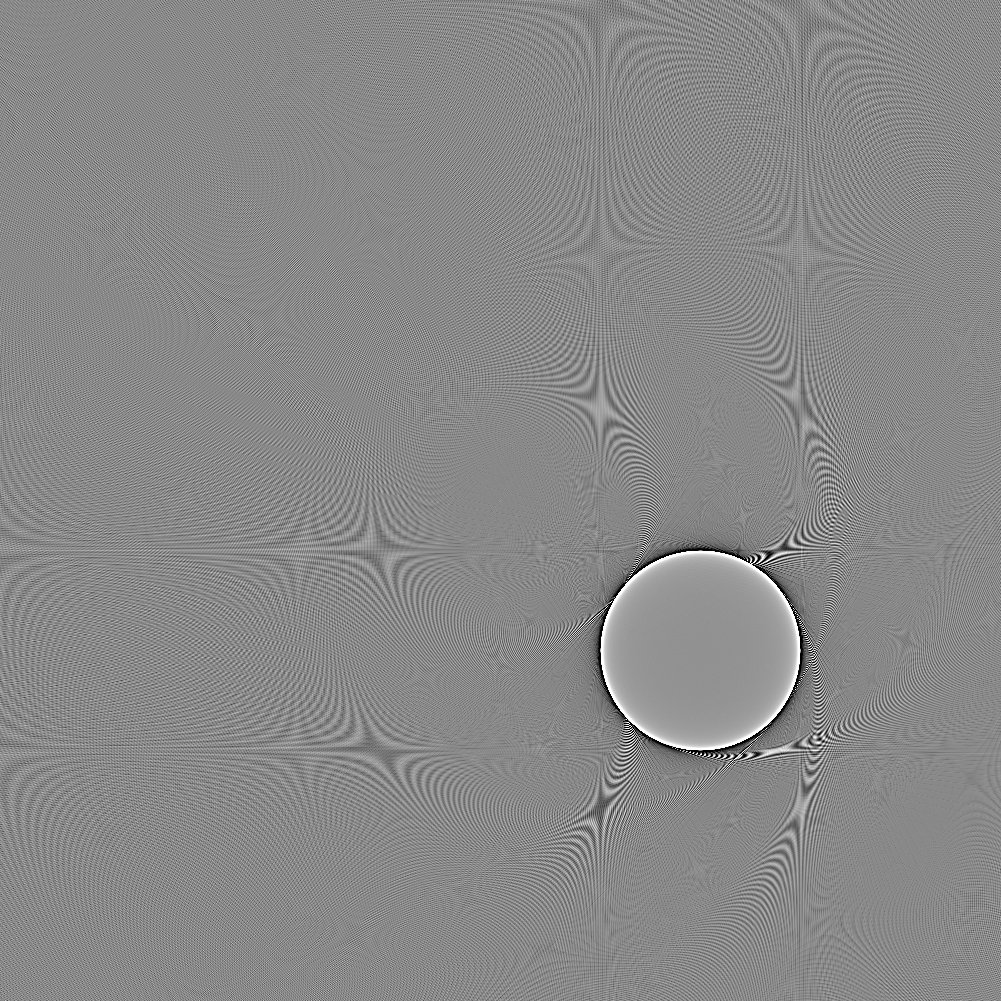, width=5cm}}
{\epsfig{file=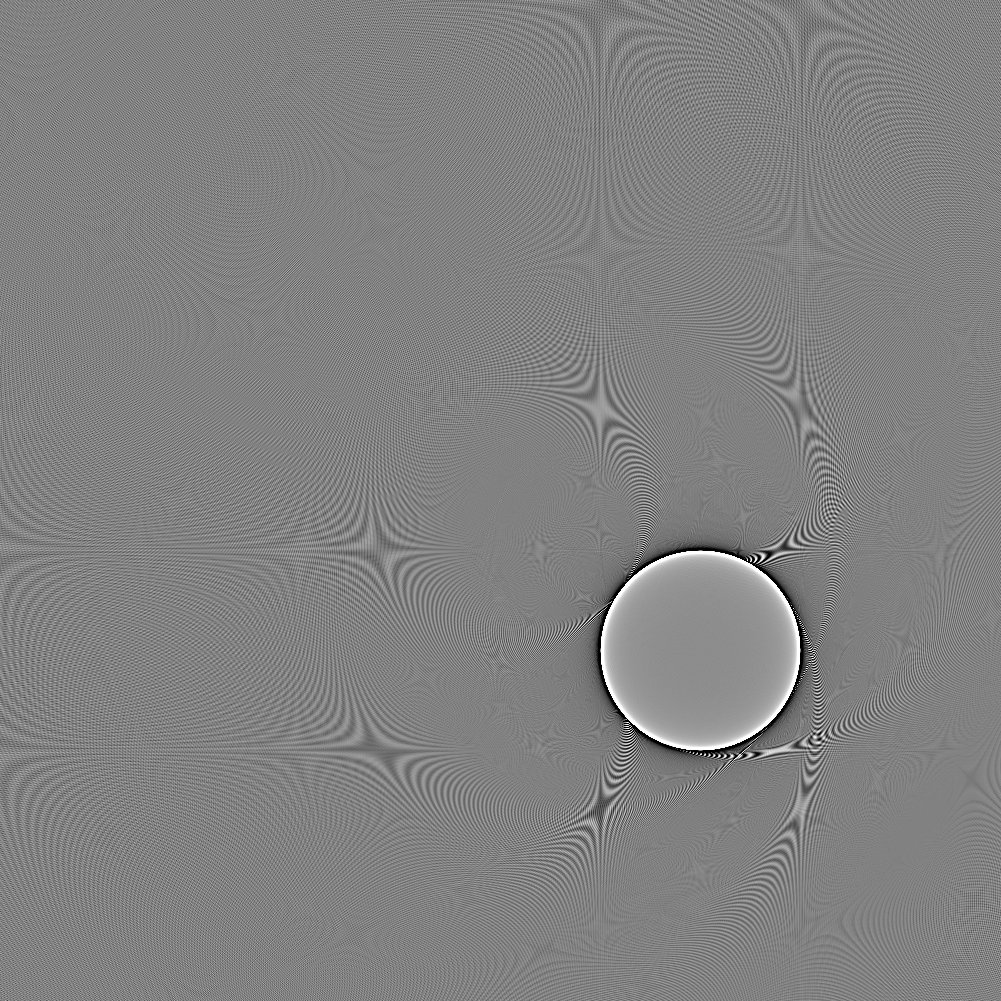, width=5cm}}
}}}}
\caption{Reconstructed $\lte$, $n_0=2500$. Left panel: without data smoothing, right panel: with data smoothing.}
\label{fig:2500}
\end{figure}

\begin{figure}[h]
{\centerline{
{\hbox{
{\epsfig{file=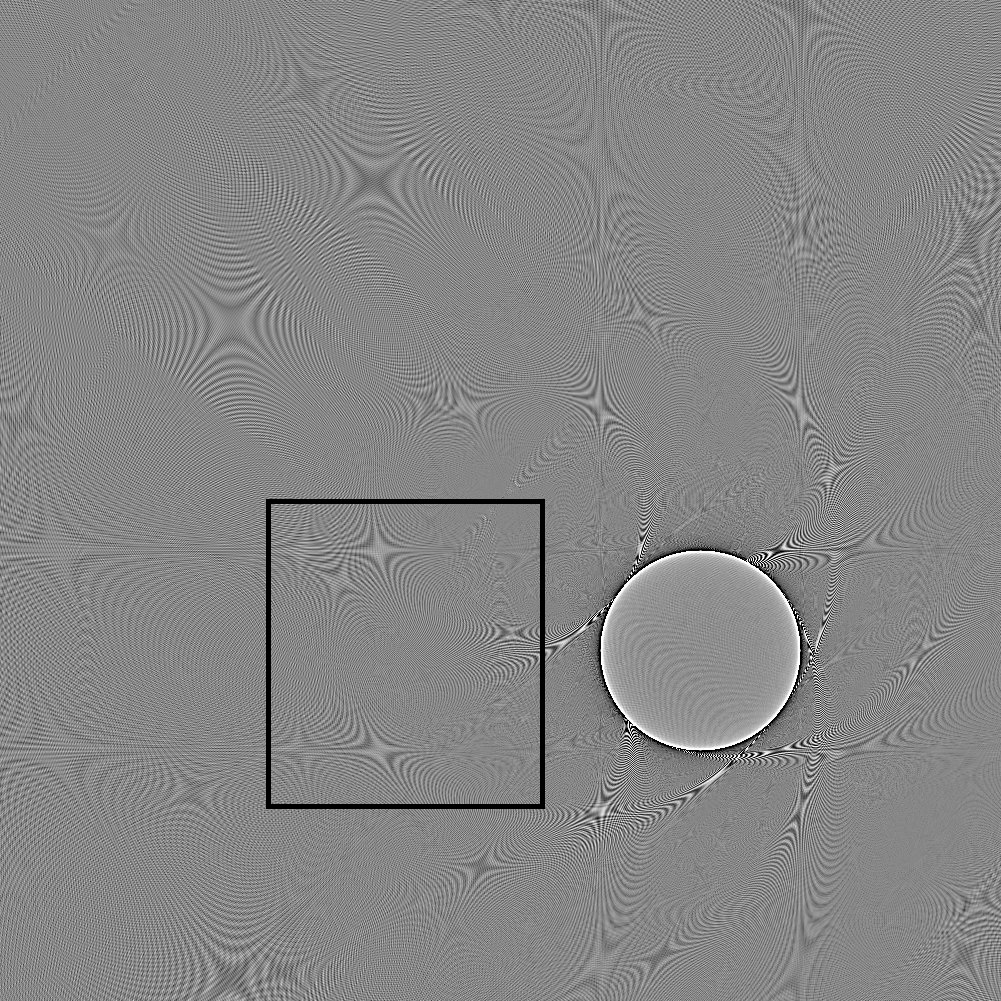, width=5cm}}
{\epsfig{file=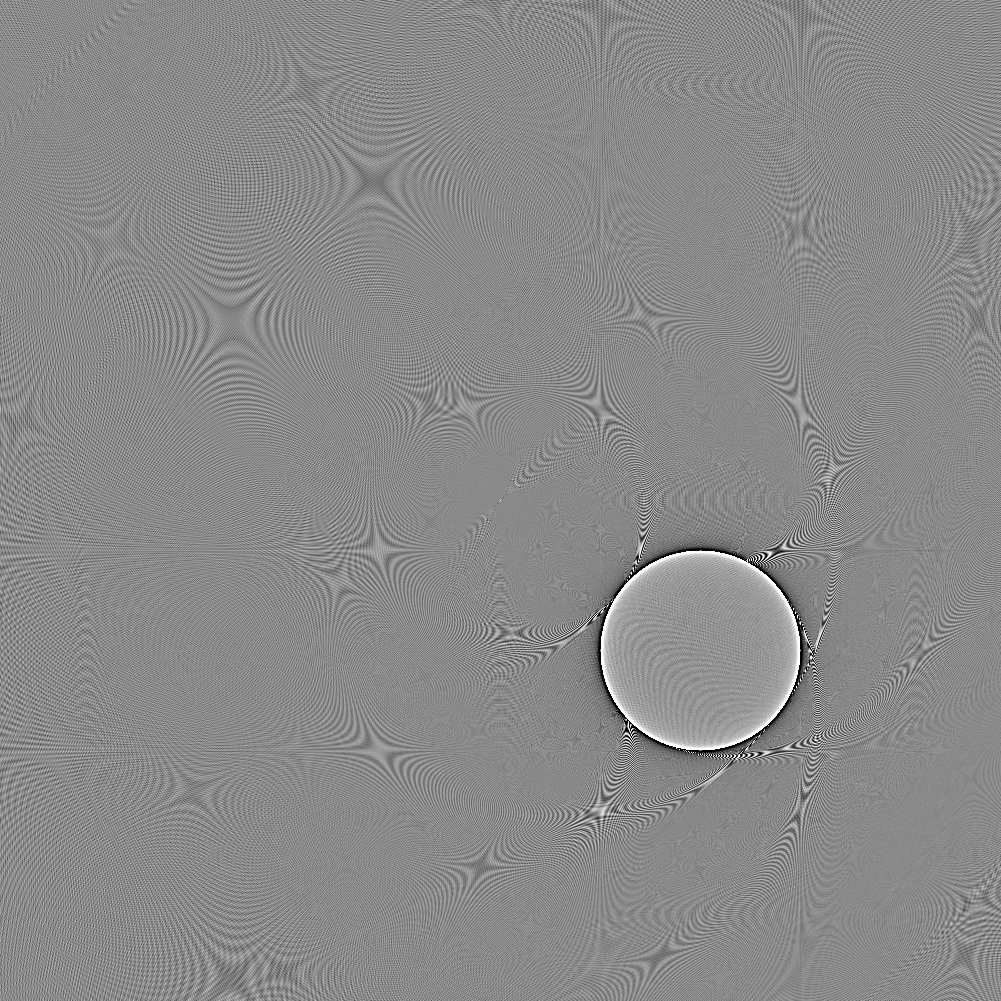, width=5cm}}
}}}}
\caption{Reconstructed $\lte$, $n_0=5000$. Left panel: without data smoothing, right panel: with data smoothing. The rectangle on the left panel is used for computing the standard deviation in a region of the image.}
\label{fig:5000}
\end{figure}

The first group of experiments uses a disk phantom with center $x_c=(2,1.5)$, radius $R=1$, and uniform density 1. The reconstructed $f_{\Lambda\e}$ are shown in Figures~\ref{fig:1000}, \ref{fig:2500}, and \ref{fig:5000}. They correspond to $n_0=1000$, 2500, and 5000, respectively. Left panels show reconstructions from the discrete values of the Radon transform (as described in Section~\ref{ltwosm}), while right panels show reconstructions from the Radon transform averaged over detector pixels, cf. \eqref{sm-data}. For each pixel the window function $\nu$ is constant inside the interval of length $\Delta p$, its support is centered at $p_j$, and it is normalized so that $\int\nu(p)dp=1$. As expected, no qualitative difference is visible between the left and right panels corresponding to the same value of $n_0$. 

\begin{figure}[h]
{\centerline{
{\vbox{
{\hbox{
{\epsfig{file=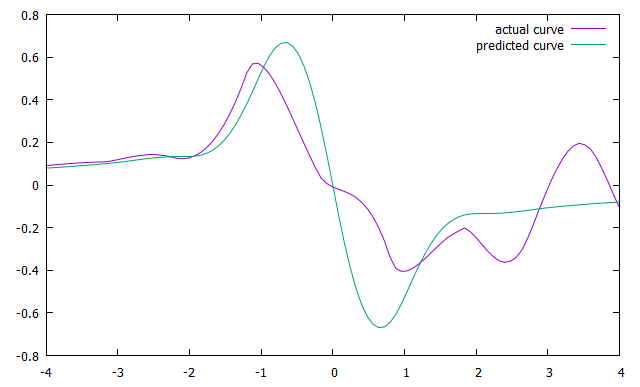, width=6.5cm}}
{\epsfig{file=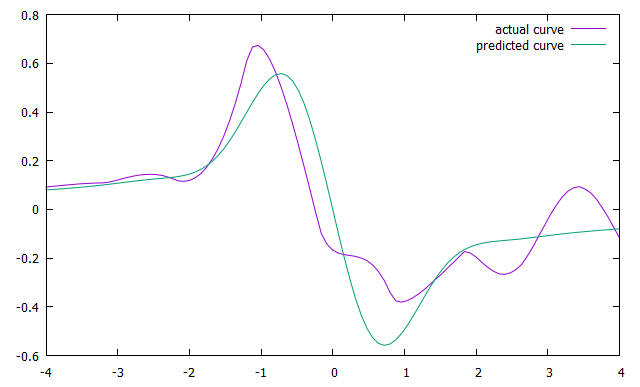, width=6.5cm}}
}}
{\hbox{
{\epsfig{file=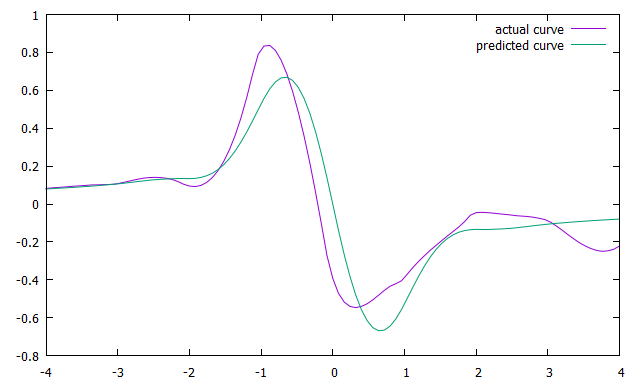, width=6.5cm}}
{\epsfig{file=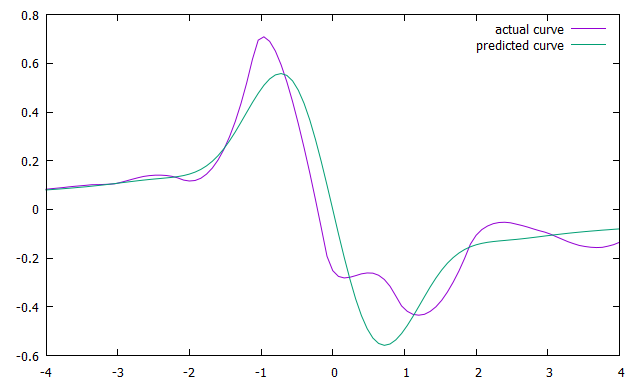, width=6.5cm}}
}}
{\hbox{
{\epsfig{file=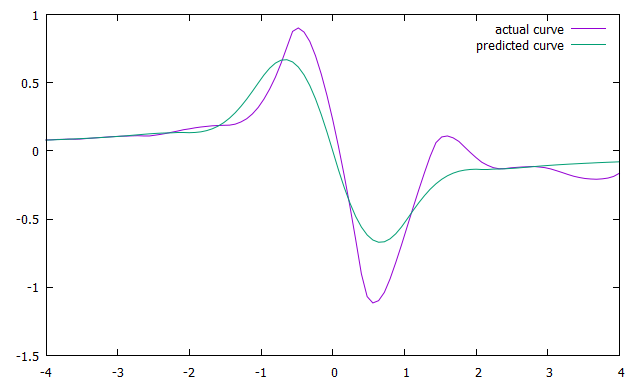, width=6.5cm}}
{\epsfig{file=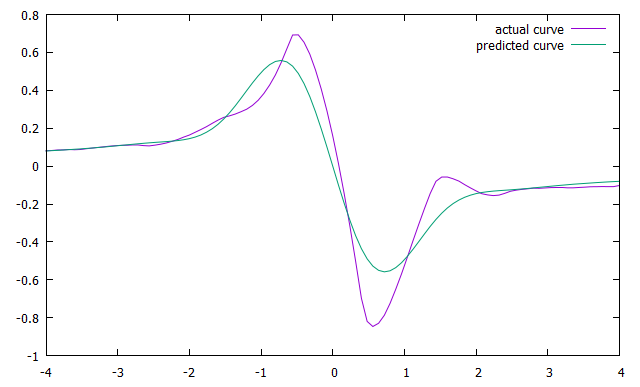, width=6.5cm}}
}}
}}}}
\caption{Reconstructed and predicted edge response, $\al_0=0.73\pi$. $a=-1.006592$. Top to bottom: $n_0=1000$, 2500, 5000. Left panels: without smoothing, right panels: with smoothing.}
\label{fig:edge_073pi}
\end{figure}

\begin{figure}[h]
{\centerline{
{\vbox{
{\hbox{
{\epsfig{file=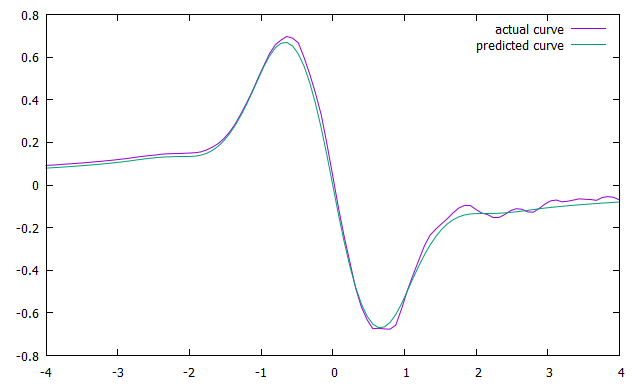, width=6.5cm}}
{\epsfig{file=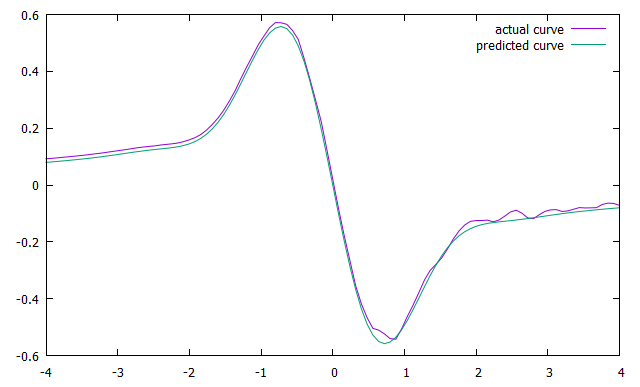, width=6.5cm}}
}}
{\hbox{
{\epsfig{file=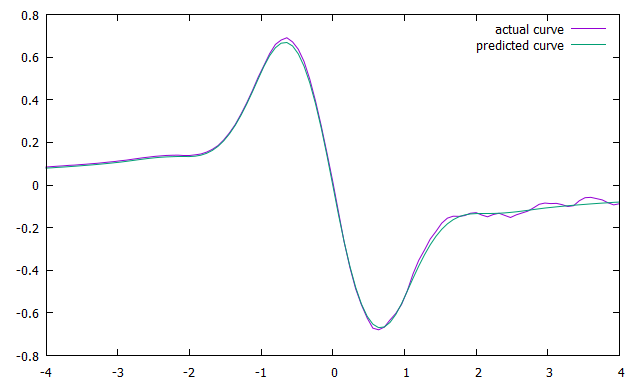, width=6.5cm}}
{\epsfig{file=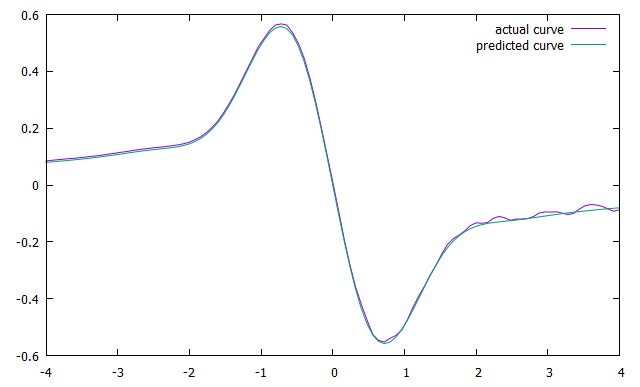, width=6.5cm}}
}}
{\hbox{
{\epsfig{file=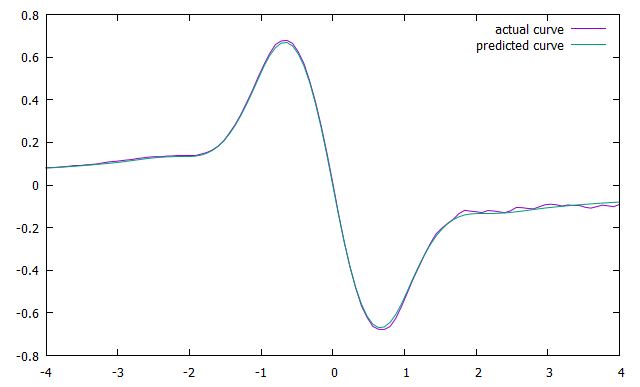, width=6.5cm}}
{\epsfig{file=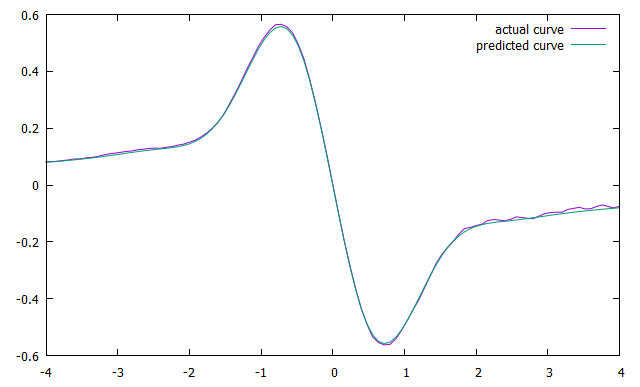, width=6.5cm}}
}}
}}}}
\caption{Reconstructed and predicted edge response, $\al_0=\sqrt 2 \pi$. $a=0.617327$. Top to bottom: $n_0=1000$, 2500, 5000. Left panels: without smoothing, right panels: with smoothing.}
\label{fig:edge_sqrt2pi}
\end{figure}

To verify that the predicted edge response (cf. Lemma~\ref{lem-phi-lim}) is accurate, we  compute $\e f_{\Lambda\e}(\xe)$ (i.e., with a factor of $\e$) on a fine grid along two radial lines through the boundary of the disk. The intersection points with the boundary are $x_0 = x_c + R\dir_0$, $\dir_0=(\cos\al_0,\sin\al_0)$, where $\al_0=0.73\pi$ for the first line, and $\al_0=\sqrt 2\pi$ for the second line. The reconstruction grid covers the region $\xe=x_0+ h \e\dir_0$, $|h|\le 4$. The predicted and actual edge responses are shown in Figures~\ref{fig:edge_073pi} and \ref{fig:edge_sqrt2pi}. The value of $h$ is shown on the $x$-axis of each of the plots. We also compute the value of the parameter $a:=(\Theta_0^\perp\cdot x_0){{\kappa}}$ for each $\al_0$ (cf. \eqref{ri-term}). In \eqref{ri-term} we assume that $\dir_0$ is an interior normal, while in this section $\dir_0$ is an exterior normal, so the values of $a$ here and in \eqref{ri-term} have opposite signs. Note that according to \eqref{angles}, $\kappa=\Delta\al/\Delta p$ is independent of $n_0$. 

In Figure~\ref{fig:edge_073pi} the match between the predicted and actual edge responses is bad, while in Figure~\ref{fig:edge_sqrt2pi} it is quite accurate. Recall that the edge response is derived under the assumption that $x_0$ is generic, i.e. $a$ is irrational. We have $a=-1.006592$ in Figure~\ref{fig:edge_073pi}, and $a=0.617327$ in Figure~\ref{fig:edge_sqrt2pi}. {In the first case, $a$ can be accurately approximated by a rational number of the form $j/m$, where $j\in\mathbb Z$, $m\in\mathbb N$, and $m$ is small. In the second case, to accurately approximate $a$ by a rational number requires a larger denominator $m$.} Consequently, $x_0$ is almost non-generic in the first case, and generic - in the second case. This experiment demonstrates that local tomography is sensitive to {whether $a$ is close to a rational number with a small denominator.} This is in contrast with exact reconstruction (see \cite{kat19a}), which is much less sensitive to how non-generic a point $x_0\in\text{singsupp}(f)$ is. 

\begin{figure}[h]
{\centerline{
{\hbox{
{\epsfig{file=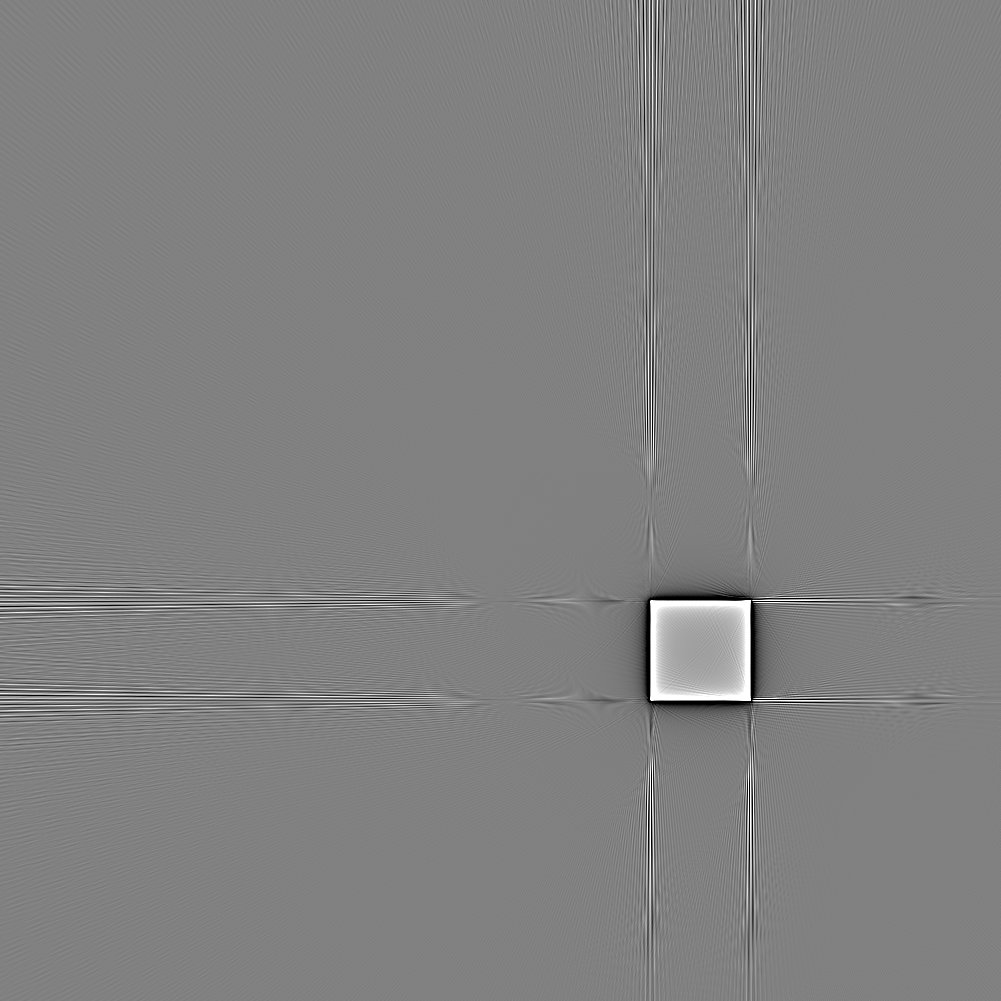, width=4.5cm}}
{\epsfig{file=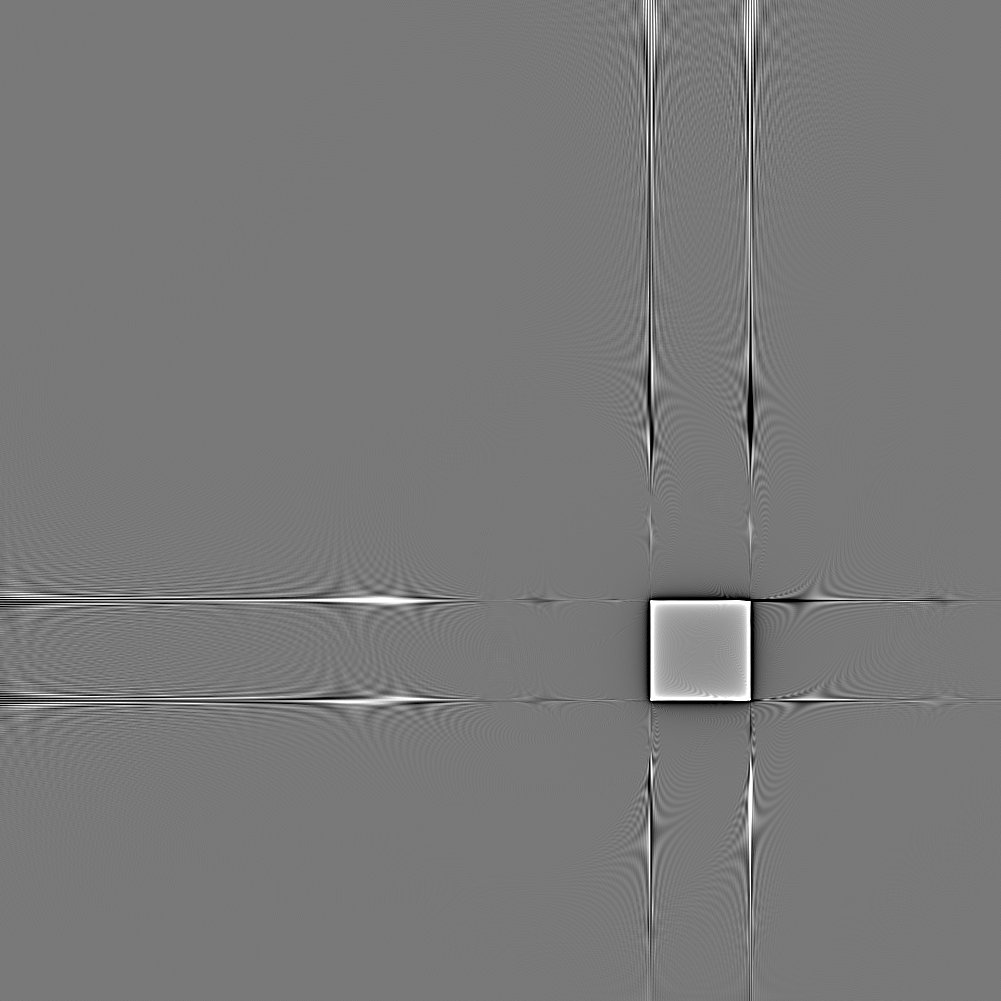, width=4.5cm}}
{\epsfig{file=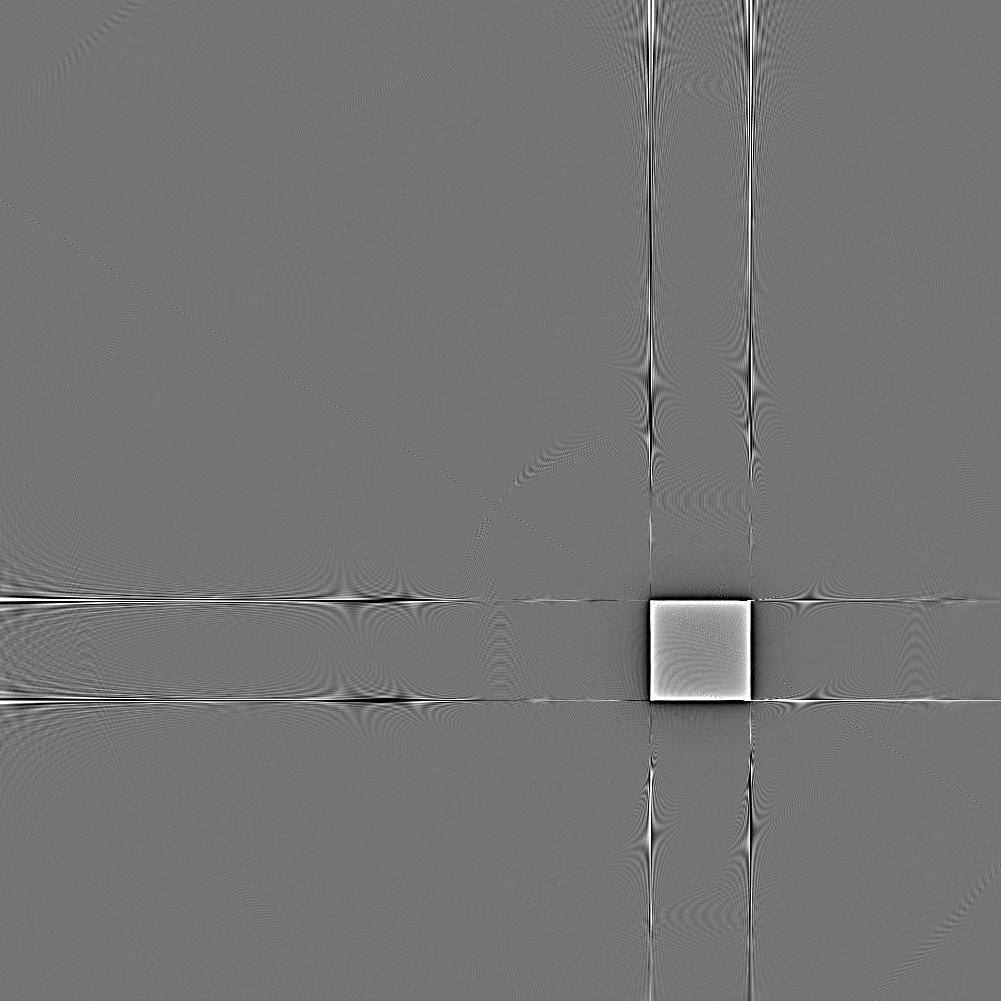, width=4.5cm}}
}}}}
\caption{Reconstructed $f_{\Lambda\e}$. Left to right: $n_0=1000$, 2500, 5000. The phantom is a square, all reconstructions are with smoothing.}
\label{fig:square}
\end{figure}

To demonstrate non-local artifacts (cf. \eqref{lt-line-st2}), we simulate a square with center $x_c=(2,1.5)$, side length 1, and uniform density 1. To avoid irrelevant complications related to $\hat f(\al,p)$ being discontinuous, we only show the results with $\hat f$ averaged over detector pixels, see Figure~\ref{fig:square}. The nonlocal artifacts are clearly visible. They extend far from the square itself, and exhibit a complicated pattern.

Finally, we verify that away from $\text{singsupp}(f)$, the reconstructed $\lte$ does not converge pointwise as $\e\to0$, $n_0\to\infty$  (cf. \eqref{remsing-3}). We select an identical rectangle (a total of 84036 pixels) in all six images and compute the standard deviation of $\lte$ within each rectangle. The obtained values are as follows: without smoothing -- 1.2751, 2.0912, 3.0335, and with smoothing -- 0.8639, 1.4079, 2.0881. The values are given in the order $n_0=1000$, 2500, and 5000. The rectangle is shown in Figure~\ref{fig:5000}, left panel. The observed values of standard deviation are in qualitative agreement with the $\sim\e^{-1/2}$ (or, $\sim n_0^{1/2}$) dependence in \eqref{remsing-3}. The ratios $2.0912/1.2751=1.6400$,  $3.0335/1.2751=2.3790$ in the no smoothing case, and $1.4079/0.8639=1.6297$, $2.0881/0.8639=2.4171$ in the data smoothing case are fairly close to the expected values $2.5^{1/2}\approx 1.5811$, $5^{1/2}\approx 2.2361$.

\bibliographystyle{plain}
\bibliography{bibliogr_A-K,bibliogr_L-Z}

\begin{thebibliography}{10}

\bibitem{ar01}
R.~G. Airapetyan and A.~G. Ramm.
\newblock Singularities of the {Radon} transform.
\newblock {\em Applicable Analysis}, 79:351--371, 2001.

\bibitem{eps08}
C.~L. Epstein.
\newblock {\em Introduction to the mathematics of medical imaging}.
\newblock SIAM, Philadelphia, second edition, 2008.

\bibitem{far04}
A.~Faridani.
\newblock Sampling theory and parallel-beam tomography.
\newblock In {\em Sampling, wavelets, and tomography}, volume~63 of {\em
  Applied and Numerical Harmonic Analysis}, pages 225--254. Birkhauser Boston,
  Boston, MA, 2004.

\bibitem{fbh01}
A.~Faridani, K.~Buglione, P.~Huabsomboon, et~al.
\newblock Introduction to local tomography.
\newblock In {\em Radon transforms and tomography. Contemp. Math., 278}, pages
  29--47. Amer. Math. Soc, 2001.

\bibitem{GS}
I.~M. Gelfand and G.E. Shilov.
\newblock {\em Generalized Functions. {Volume} 1: Properties and Operations}.
\newblock Academic Press, New York, 1964.

\bibitem{gku16}
F.~Giudiceandrea, A.~Katsevich, and E.~Ursella.
\newblock {A Reconstruction Algorithm is a Key Enabling Technology for a New
  Ultrafast CT Scanner}.
\newblock {\em SIAM News}, November 2016.

\bibitem{hl12}
B.~Hahn and A.~K. Louis.
\newblock Reconstruction in the three-dimensional parallel scanning geometry
  with application in synchrotron-based x-ray tomography.
\newblock {\em Inverse Problems}, 28, 2012.
\newblock Article ID: 045013.

\bibitem{hor}
L.~Hormander.
\newblock {\em {The Analysis of Linear Partial Differential Operators I.
  Distribution Theory and Fourier Analysis.}}
\newblock Springer-Verlag, Berlin, 2003.

\bibitem{hor3}
L.~Hormander.
\newblock {\em {The Analysis of Linear Partial Differential Operators III.
  Pseudo-Differential Operators.}}
\newblock Springer-Verlag, Berlin, 2007.

\bibitem{hor4}
L.~Hormander.
\newblock {\em {The Analysis of Linear Partial Differential Operators IV.
  Fourier Integral Operators.}}
\newblock Springer-Verlag, Berlin, 2009.

\bibitem{hny13}
J.~Hsieh, B.~Nett, Zh. Yu, K.~Sauer, J.-B. Thibault, and Ch.~A. Bouman.
\newblock {Recent Advances in CT Image Reconstruction}.
\newblock {\em Current Cardiology Reports}, 1:39--51, 2013.

\bibitem{kat99_a}
A.~Katsevich.
\newblock Asymptotics of pseudodifferential operators acting on functions with
  corner singularities.
\newblock {\em Applicable Analysis}, pages 229--252, 1999.

\bibitem{kat_2017}
A.~Katsevich.
\newblock A local approach to resolution analysis of image reconstruction in
  tomography.
\newblock {\em SIAM Journal on Applied Mathematics}, 77:1706--1732, 2017.

\bibitem{kat19a}
A.~Katsevich.
\newblock Analysis of reconstruction from discrete {Radon transform data in
  $\mathbb R^3$} when the function has jump discontinuities.
\newblock {\em SIAM Journal on Applied Mathematics}, 79:1607 -- 1626, 2019.

\bibitem{kat19b}
A.~Katsevich.
\newblock {Resolution analysis of inverting the generalized Radon transform
  from discrete data in $\mathbb R^3$}.
\newblock arXiv:1908.04753, 2019.

\bibitem{kyf19}
A.~Katsevich, S.~Yoon, M.~Frenkel, E.~Morton, and W.~Thompson.
\newblock Reduction of irregular view-sampling artifacts in a stationary gantry
  {CT} scanner.
\newblock In S.~Metzler and S.~Matej, editors, {\em Proceedings of the 15th
  International Meeting on Fully Three-Dimensional Image Reconstruction in
  Radiology and Nuclear Medicine}, pages 11072--82, 2019.
\newblock Proceedings of SPIE, Volume 11072.

\bibitem{KN_06}
L.~Kuipers and H.~Niederreiter.
\newblock {\em Uniform Distribution of Sequences}.
\newblock Dover Publications, Inc., Mineola, NY, 2006.

\bibitem{louis16}
A.~K. Louis.
\newblock Exact cone beam reconstruction formulae for functions and their
  gradients for spherical and flat detectors.
\newblock {\em Inverse Problems}, 32, 2016.
\newblock article ID: 115005 (16pp).

\bibitem{nat93}
F.~Natterer.
\newblock Sampling in fan beam tomography.
\newblock {\em SIAM Journal on Applied Mathematics}, 53:358--380, 1993.

\bibitem{orh20}
K.~Orhan, editor.
\newblock {\em {Micro-computed Tomography (micro-CT) in Medicine and
  Engineering}}.
\newblock Springer Nature, Switzerland, 2020.

\bibitem{pal95}
V.~P. Palamodov.
\newblock Localization of harmonic decomposition of the {Radon} transform.
\newblock {\em Inverse Problems}, 11:1025--1030, 1995.

\bibitem{rk}
A.~Ramm and A.~Katsevich.
\newblock {\em The {Radon} Transform and Local Tomography}.
\newblock CRC Press, Boca Raton, Florida, 1996.

\bibitem{rz2}
A.G. Ramm and A.I. Zaslavsky.
\newblock Reconstructing singularities of a function given its {Radon}
  transform.
\newblock {\em Math. and Comput. Modelling}, 18(1):109--138, 1993.

\bibitem{rz1}
A.G. Ramm and A.I. Zaslavsky.
\newblock Singularities of the {Radon} transform.
\newblock {\em Bull. Amer. Math. Soc.}, 25:109--115, 1993.

\bibitem{stef18}
P.~Stefanov.
\newblock Semiclassical sampling and discretization of certain linear inverse
  problems.
\newblock arXiv:1811.01240, 2018.

\bibitem{tmk19}
W.~Thompson, E.~Morton, A.~Katsevich, S.~Yoon, and M.~Frenkel.
\newblock Real-time {GPU implementation of a weighted filtered back-projection
  algorithm for stationary gantry CT} reconstruction.
\newblock In S.~Metzler and S.~Matej, editors, {\em Proceedings of the 15th
  International Meeting on Fully Three-Dimensional Image Reconstruction in
  Radiology and Nuclear Medicine}, pages 11072--81, 2019.
\newblock Proceedings of SPIE, Volume 11072.

\bibitem{vkk}
E.~Vainberg, E.~Kazak, and V.~Kurczaev.
\newblock Reconstruction of the internal three-dimensional structure of objects
  based on real-time integral projections.
\newblock {\em Sov. J. Nondestr. Testing}, 17:415--423, 1981.

\bibitem{wong}
R.~Wong.
\newblock {\em Asymptotic Approximations of Integrals}.
\newblock Academic Press, Boston, 1989.

\end{thebibliography}

\appendix

\section{Proof of Lemma \ref{lem:mu-der}}\label{sec:mu-deriv}

By \eqref{losing} and \eqref{f-def}, we compute the asymptotics of the integral
\be\label{wf-lim}\begin{split}
&\e^{m-{\kappa_1}}(f,P_m(\pa_x)w_\e)=\frac{\e^{m-\kappa_1}}{(2\pi)^n}\ioi J_\e(\e\la)\la^{m+(n-1)}d\la\\
&\hspace{3.1cm}=\frac{1}{(2\pi)^n}\ioi \e^{-(s_0+\frac{n-1}2)}J_\e(\eta)\eta^{m+(n-1)}d\eta
,\ \e\to0,\\
&J_\e(\eta):=\int_{S^{n-1}}P_m(-i\al)\tilde w(-\eta\al)\fs((\eta/\e)\al)e^{i(\eta/\e) (H(\al)-\al\cdot x_0)}d\al,
\end{split}
\ee 
where $\tilde w={\mathcal F} w$. 

Due to $\tilde v_j\in\coi(\Omega\cup(-\Omega))$, $j\ge 0$ (cf. \eqref{f-lim}), integration on the last line in \eqref{wf-lim} can be restricted to $\Omega\cup(-\Omega)$.
By construction, on this set the exponent in \eqref{wf-lim} has two stationary points: $\al=\pm\dir_0$. At these points $H'(\pm\dir_0)=x_0$, and the Hessians $\check H''(\pm\dir_0)$ are nondegenerate. Clearly, $\check H''(-\dir_0)=-\check H''(\dir_0)$. 

When working with the integral that defines $J_\e(\eta)$ it is convenient to parametrize $\al\in\pm\Omega$ in terms of $\al^\perp$. The expansion in \eqref{f-lim} is uniform and can be differentiated with respect to $\al$ (cf. \eqref{vfn-exp-unif}), therefore the stationary phase method (see eq. (7.7.13) in \cite{hor}) and \eqref{scnd-der-shifted} imply for each $\eta>0$:
\be\label{limf-explicit}
J_\e(\eta)=(\e/\eta)^{\frac{n-1}2}\sum_{\al\in\{\pm\dir_0\}}P_m(-i\al)\tilde w(-\eta\al)\fs((\eta/\e)\al)/\omega(\al)+O(\e^{\frac{n+1}2}),\ \e\to0,
\ee 
where $\omega(\al)$ is defined in \eqref{f-lim}, and 
\be\label{Je-est}\begin{split}
|J_\e(\eta)|\le W(\eta)(\e/\eta)^{s_0+\frac{n-1}2},\ 
W(\eta):=c\sum_{k=0}^K(1+\eta)^k\sum_{|\nu|=k}\max_{\al\in S^{n-1}}|\pa_\al^{\nu}\tilde w(-\eta\al)|,
\end{split}
\ee 
for some $K\in\mathbb N$ and $c>0$ independent of $\eta$. 
Consequently,
\be\label{Je-bound}
\left|\e^{-(s_0+\frac{n-1}2)}J_\e(\eta)\eta^{m+(n-1)}\right|
\le \eta^{m-\kappa_1-1}W(\eta),\ \eta>0.
\ee 
Recall that $\tilde w(\eta\al)$ and, therefore, $W(\eta)$, are rapidly decreasing functions. Thus, the integrand on the second line in \eqref{wf-lim} admits a uniform as $\e\to0$ bound in $L^1(\br_+)$ if $m\ge\lceil\kappa_1\rceil$. In this case we can envoke the dominated convergence theorem and take the limit as $\e\to0$ inside the integral in \eqref{wf-lim}. Clearly, $\lim_{\e\to0}\fs((\eta/\e)\al)\e^{-s_0}=\omega(\al)\fs_0(\al)/\eta^{s_0}$ for any $\eta>0$. Combining \eqref{wf-lim}--\eqref{limf-explicit} now gives
\be\label{limf-res}\begin{split}
\lim_{\e\to0}\e^{m-{\kappa_1}}(f,P_m(\pa_x)w_\e)
&=\frac1{(2\pi)^n}\sum_{\al\in\{\pm\dir_0\}}
\fs_0(\al)\ioi P_m(-i\la \al)\tilde w(-\la\al)\la^{-({\kappa_1}+1)}d\la\\
&=\frac1{2\pi}
\int P_m(-i\la \dir_0)\tilde w(-\la\dir_0)\tilde\mu(\la)d\la,
\end{split}
\ee 
where the distribution $\tilde\mu\in {\mathcal S}'(\br)$ is the same as in \eqref{cpm-complete}, and the lemma is proven. A somewhat related argument is in the proof of Proposition 18.2.2 in \cite{hor3}.

\section{Proof of Lemma \ref{psi-incr}}\label{psi-incr-prf}

Due to \eqref{psi-Psi}, we may assume that both $t$ and $p$ are bounded, e.g. $|t|,|p|\le a<\infty$ for some $a$. This is assumed in all of the proof. We begin by considering the first equation in \eqref{an-psi-pr-1}. 

\noindent
{\bf Case I. ${\CBa}$ is local.} In this case, $\CBa\ik=c\tilde B_0(\al)\ik^{(\bt)}$. By IK2$'$, the sum in \eqref{three-fns} contains finitely many terms, and the desired assertion follows from the assumption $\ik^{(\bt+1)}\in L^\infty(\br)$ (cf. IK3$'$)

\noindent{\bf Case II. ${\CBa}$ is not local, $\bt\not\in\mathbb N$.}
Set $k:=\lceil\bt\rceil$, $\nu:=\{\bt\}$, $0<\nu<1$. Then $(\CBa\ik)(r)$ is a linear combination of the following terms:
\be\label{del-terms}
\int \ik^{(k)}(q)(q-r)_\pm^{-\nu} dq,
\ee
and the coefficients are $C^\infty(\Omega)$ functions of $\al$. Since $|t|\le a$, we can find $N$ so that $|j|\ge N$ implies $r=t-j\not\in\ts(\ik)$. Integrating by parts $k$ times and differentiating with respect to $r$ in \eqref{del-terms} gives $(\CBa\ik)'(r)=O(|r|^{-(\bt+2)})$, $r\to\infty$. Together with $s-1-\bt<0$ this yields
\be\label{psi-bj-part-II}
\sum_{|j|\ge N}\left[(\CBa\ik)(t+\e-j)-(\CBa\ik)(t-j)\right] \CA(j-p)=O(\e).
\ee
In the remaining terms, $j$ is bounded (together with $t$ and $p$). To estimate the remainder, we look at the integrals
\be\label{psi-smj-part-II}
J_\pm(t):=\int \ik^{(k)}(q)\left[(q-(t+\e))_\pm^{-\nu}-(q-t)_\pm^{-\nu}\right]dq,
\ee
By IK3$'$, $\ik^{(k)}(q)\in L^\infty(\br)$. Each version of the expression in brackets (with `$+$' and with `$-$') changes sign only once: when $q=t+\e$ or $q=t$. Therefore, by IK2$'$,
\be\label{Jpm-II}
|J_\pm(t)|\le O(1)\sup_{q}\left[(q+\e)_\pm^{1-\nu}-q_\pm^{1-\nu}\right]=O(\e^{1-\nu}),
\ee
and the assertion is proven.

\noindent{\bf Case III. ${\CB}$ is not local, $\bt\in\mathbb N$.}
In this case, $\CBa\ik=c\tilde B_0(\al)\CH\ik^{(\bt)}$, where $\CH$ is the Hilbert transform and $\tilde B_0(\al)\in C^\infty(\Omega)$. Choose $N$ as in Case II. Clearly, \eqref{psi-bj-part-II} still holds. Similarly to \eqref{psi-smj-part-II}, to estimate the remainder, we look at the integral
\be\label{psi-smj-part-III}
J(t):=\int\frac{g(q)}{q-t}dq,\ g(q):=\ik^{(\bt)}(q+\e)-\ik^{(\bt)}(q).
\ee
By IK3$'$, $g(q)=O(\e)$ and $g'(q)=O(1)$. Writing 
\be\label{psi-part-est}
J(t)=\left(\int_{-a}^{t-\e}+\int_{t+\e}^a
\right)\frac{g(q)}{q-t}dq+\int_{t-\e}^{t+\e}\frac{g(q)-g(t)}{q-t}dq,
\ee
where $a$, $0<a<\infty$ is sufficiently large, and applying elementary estimates we obtain the desired result.

The proof of the second estimate in \eqref{an-psi-pr-1} is fairly similar. In case I, the result immediately follows by noticing that
\be\label{A-dif}
\sup_{|p|\le a}|\CA(p+\e)-\CA(p)|=O(\e^{\min(s-1,1)}).
\ee
In cases II and III, we find $N$ so that $|j|\ge N$ implies $j-p$ is bounded away from zero. Then, similarly to \eqref{psi-bj-part-II},
\be\label{psi-bj-part-II-alt}
\sum_{|j|\ge N}(\CBa\ik)(t-j) [\CA(j-(p+\e))-\CA(j-p)]=O(\e).
\ee
The required estimate of the remainder then follows from \eqref{A-dif}.

The fact that all the estimates are uniform with respect to $t$ and $p$ is obvious. The uniformity with respect to $\al\in\Omega$ follows from the assumption $a_\pm(\al)\in C_0^\infty(\Omega)$ (cf. \eqref{gfn-def}).

{The proofs of cases I and III go through if $\ik^{(\bt+1)}\in L^\infty(\br)$. The proof of case II goes through if $\ik^{(\lceil\bt\rceil)}\in L^\infty(\br)$. In the remaining proofs, we will be keeping track of the degree of exactness of $\ik$ (denoted by $\CE_\ik$) and the highest order derivative of $\ik$ (denoted by $\CD_\ik$) that is required at each step. The maximum values of $\CE_\ik$ and $\CD_\ik$ are then stated in IK1$'$ and IK3$'$, respectively.}

\section{Proof of Lemma \ref{contverpsi}}\label{sec:contverpsi}

In what follows we assume $t\to+\infty$. The proof when $t\to-\infty$ is completely analogous. 
%
%
We begin by computing $\CF\CA$ using \eqref{three-fns} and \eqref{main-eqs}:
\be\label{FT-A}\begin{split}
\tilde a(\la):=&(\CF\CA)(\la)=\Gamma(s)\begin{cases}a_+q(\la+i0)^{-s}+(a_-/q)(\la-i0)^{-s},& s\not\in\mathbb Z,\\
\left[a_+q+(a_-/q)\right]\la^{-s}+\pi\left[a_+(i/q)-a_-iq\right]\de^{(s-1)},&
s\in\mathbb N,
\end{cases}\\ 
q:=&e^{i(\pi/2)s}.
\end{split}
\ee
Therefore (cf. \eqref{aux-fn-lim-pm}):
\be\label{Psi-ft}
\Psi(t)=\frac1{2\pi}\int \tilde b(\la)\tilde\ik(\la)\tilde a(\la)e^{-i\la t}d\la=\frac1{2\pi}\int \tilde\ik(\la)\left(c_+^{(1)}\la_+^{\bt-s}+c_-^{(1)}\la_-^{\bt-s}\right)e^{-i\la t}d\la.
\ee
To prove the lemma consider two cases. 

\noindent{\bf Case I. $\bt-s\not\in\mathbb Z$.}
Set $k:=\lceil\bt-s+1\rceil$. Since $\tilde\ik(0)=1$, Theorem 1 in Section IV.2 of \cite{wong} gives  
\be\label{Psi-ass-IIa}\begin{split}
\Psi(t)&=\CF^{-1}\left(c_+^{(1)}\la_+^{\bt-s}+c_-^{(1)}\la_-^{\bt-s}\right)
+O(t^{-k})\int \left|\left(|\la|^{\bt-s}(\tilde\ik(\la)-1)\right)^{(k)}
\right|d\la.
\end{split}
\ee
The following condition ensures that the last integral in \eqref{Psi-ass-IIa} is finite:
\be\label{ik-cond-IIa}
\tilde\ik^{(j)}(\la)=O(|\la|^{s-1-\bt-c}),\ \la\to\infty,\ 0\le j\le \lceil\bt-s+1\rceil,
\ee
for some $c>0$.

\noindent{\bf Case II. $\bt-s\in 0\cup\mathbb N$.} The assumption $\bt\ge s+(n-3)/2$ implies that if $\bt-s\in\mathbb Z$, then $\bt-s\ge0$. The asymptotics of $\Psi$  is obtained integrating by parts in \eqref{Psi-ft}:
\be\label{Psi-ass-IIb}\begin{split}
\Psi(t)=&\CF^{-1}\left(c_+^{(1)}\la_+^{\bt-s}+c_-^{(1)}\la_-^{\bt-s}\right)+O(t^{s-2-\bt})\\
&\times\left[\int_0^\infty \left|\left(\la^{\bt-s}\tilde\ik(\la)\right)^{(\bt-s+2)}\right|d\la+\int_0^\infty \left|\left(\la^{\bt-s}\tilde\ik(-\la)\right)^{(\bt-s+2)}\right|d\la
+|\tilde\ik'(0)|\right].
\end{split}
\ee
The following condition ensures that \eqref{Psi-ass-IIb} holds (including that all the boundary terms of order less than $O(t^{s-1-\bt})$ vanish):
\be\label{ik-cond-IIb}
\tilde\ik^{(j)}(\la)=O(|\la|^{s-1-\bt-c)}),\ \la\to\infty,\ 0\le j\le \bt-s+2,
\ee
for some $c>0$.

To complete the proof, it remains to show that if $c_-^{(1)}=c_+^{(1)}e^{-i(\bt-s)\pi}$, then the leading term of the asymptotics disappears. 
From \eqref{main-eqs},
\be
\CF^{-1}\left(c_+^{(1)}\la_+^{\bt-s}+c_-^{(1)}\la_-^{\bt-s}\right)
=\begin{cases} c_1t_-^{s-1-\bt},&\bt-s\not\in\mathbb Z,\\
c_2\de^{(\bt-s)}(t),&\bt-s\in 0\cup\mathbb N,\end{cases}
\ee
for some $c_{1,2}$, and the assertion follows. 

{Condition IK3$'$ guarantees that \eqref{ik-cond-IIa} and \eqref{ik-cond-IIb} hold. Since $\ik$ is compactly supported, $\ik^{(l)}\in L^1(\br)$ implies that $\tilde \ik^{(j)}(\la)=O(|\la|^{-l})$, $\la\to\infty$, for any $j\ge0$. Therefore, we have to make sure that the following inequality is satisfied 
\be\label{ik-ineq}
\begin{cases}
\bt+1,& \bt\in\mathbb N\\
\lceil\bt\rceil ,& \bt\not\in\mathbb N
\end{cases} \ge
\bt+1+c-s
\ee
for some $c>0$. The above inequality holds if $s\ge 1+c$ for some $c>0$. This is clearly true, since we assume that $s\ge (n+1)/2\ge 3/2$.}

\section{Proof of Lemma \ref{psi-asympt}}\label{sec:psi-asympt}

In view of $\psi(t,p)=\psi(t-m,p-m)$, $m\in\mathbb Z$, we may assume without loss of generality that $p\in[0,1]$. Define
\be\label{big-F-v1}
F_s(t):=\sum_j \ik(t-j) \CA(j-p),\ F_i(t):=\int \ik(t-r) \CA(r-p)dr.
\ee
The subscripts `$s$' and `$i$' stand for the `sum' and `integral', respectively. To simplify notations, the dependence of $F_s$ and $F_i$ on $p$ is ignored.
First, we have
\be\label{F-bounds-v1}\begin{split}
F_*^{(l)}(t)&=\CA^{(l)}(t)+O(|t|^{s-2-l}), \ t\to\infty,\ 0\le l\le L_\bt,\\ 
F_*^{(l)}(t)&=O(|t|^{s-1-l}), \ t\to\infty,\ l=L_\bt+1,\ \bt\in\mathbb N,
\end{split}
\ee
where $*=s,i,$ and the big-$O$ terms are uniform with respect to $p\in [0,1]$. The statement for $F_i$ is trivial in view of IK4$'$. The statement for $F_s$ follows easily too by using that $\ik$ is exact for polynomials of degree up to $L_\bt$, representing $\CA(j-p)$ as the sum of the Taylor polynomial of degree $L_\bt$ centered at $t$ and the remainder, and differentiating $F_s$ the required number of times. 

Denote
\be\label{deltas}
\Delta F(t):=F_s(t)-F_i(t),\ \Delta\psi(t):=\psi(t,p)-\Psi(t-p).
\ee
The $p$-dependence of $\Delta F$, $\Delta\psi$, and various other quantities below is omitted for simplicity. Clearly, $\Delta\psi(t)=\CBa\Delta F$. From \eqref{F-bounds-v1}, 
\be\label{F-diff}
\Delta F^{(l)}(t)=\begin{cases} O(|t|^{s-2-l}),& 0\le l\le L_\bt,\\
O(|t|^{s-1-l}),& l=L_\bt+1,\ \bt\in\mathbb N, 
\end{cases}
\quad t\to\infty.
\ee

{For \eqref{F-diff} to hold when $l\le L_\bt$, $\ik$ should be exact to the degree $l$ so that the leading terms in the asymptotics of $F_s^{(l)}(t)$ and $F_i^{(l)}(t)$ cancel each other. Thus, \eqref{F-diff} for any $0\le l\le L_\bt$ requires $\CE_\ik=l$, $\CD_\ik=l$. When $l=L_\bt+1$ in \eqref{F-diff}, no cancellation is needed, and in this case $\CE_\ik=l-1$, $\CD_\ik=l$.}

In what follows we assume $t\to+\infty$. The proof when $t\to-\infty$ is completely analogous. 
To prove the lemma we consider three cases, which correspond to the three lines in \eqref{remainder-v2}. Denote $\vartheta:=s-2-\bt$. The condition $\kappa_2\ge0$ implies $\vartheta\le-3/2$.

\noindent
{\bf Case I. ${\CBa}$ is local, $\bt\in \mathbb N$}. By \eqref{F-diff} with $l=\bt$ ({$\CE_\ik=\bt$, $\CD_\ik=\bt$}),
\be\label{B-loc-v1}
\Delta \psi(t)=b_+(i\pa_p)^\bt  \Delta F(t)=O(t^\vartheta).
\ee

\noindent{\bf Case II. ${\CBa}$ is not local, $\bt\not\in\mathbb N$.}
Set $k:=\lceil\bt\rceil$, $\nu:=\{\bt\}$, $0<\nu<1$. Then $\Delta\psi(t)$ is a linear combination of the following terms:
\be\label{aux_terms-v1}
J_\pm(t):=\int \Delta F^{(k)}(q)(q-t)_\pm^{-\nu} dq.
\ee
By \eqref{F-diff} with $l=k=\lceil\bt\rceil$ ({$\CE_\ik=\lceil\bt\rceil$, $\CD_\ik=\lceil\bt\rceil$}):
\be\label{jp-est-v1}\begin{split}
J_+(t)=&\int_t^{\infty}\Delta F^{(k)}(q)(q-t)^{-\nu} dq=\int_t^{\infty} O(q^{s-2-k})(q-t)^{-\nu} dq
=O(t^\vartheta),
\end{split}
\ee
where we have used that $s-1-k-\nu=\vartheta<0$. The term $J_-(t)$ is estimated by splitting it into two integrals:
\be\label{jm-est-v1}\begin{split}
&J_-^{(1)}(t):=\int_{-\infty}^{t/2} \Delta F^{(k)}(q)(t-q)^{-\nu} dq,\
J_-^{(2)}(t):=\int_{t/2}^t \Delta F^{(k)}(q) (t-q)^{-\nu} dq.
\end{split}
\ee
Integrating by parts and using \eqref{F-diff} with all $l$, $0\le l\le k-1$ ({$\CE_\ik=\lceil\bt\rceil-1$, $\CD_\ik=\lceil\bt\rceil-1$}), gives
\be\label{jm-est-1-v1}\begin{split}
|J_-^{(1)}(t)|&= O(1)\int_{-\infty}^{t/2}| \Delta F(q)|(t-q)^{-(\bt+1)} dq+O(t^\vartheta)\\
&=O(1)\int_{-\infty}^{t/2} \frac{|q|^{s-2}}{(t-q)^{\bt+1}}dq+O(1)\int_{-1}^{1} \frac{1}{(t-q)^{\bt+1}}dq+O(t^\vartheta)
=O(t^\vartheta),
\end{split}
\ee
where we have used that $\kappa_1\ge0$, i.e. $s-1\ge 1/2$. The term $J_-^{(2)}(t)$ is estimated analogously to \eqref{jp-est-v1}, and we get the same estimate as in \eqref{jp-est-v1}. Therefore, $\Delta\psi(t)=O(t^\vartheta)$.

\noindent{\bf Case III. ${\CBa}$ is not local, $\bt\in\mathbb N$.}
Now we have to look at only one expression
\be\label{hilb-v1}
J(t):=\int \Delta F^{(k)}(q)\frac1{q-t}dq,\ \bt=k,
\ee
which is split into five integrals:
\be\label{hilb-v1-pieces}
J_1(t)+\dots J_5(t):=\left(\int_{-\infty}^{-t}+\int_{-t}^{t/2}+\int_{t/2}^{t-1}
+\int_{t-1}^{t+1}+\int_{t+1}^{\infty}\right) \Delta F^{(k)}(q)\frac1{q-t}dq.
\ee
By \eqref{F-diff} with $l=k$ ({$\CE_\ik=\bt$, $\CD_\ik=\bt$}), we immediately get $J_1(t)=O(t^\vartheta)$. 

Integrating by parts in the definition of $J_2$ and using that all the boundary terms are of order $O(t^\vartheta)$ ({$\CE_\ik=\bt-1$, $\CD_\ik=\bt-1$}), we get similarly to \eqref{jm-est-1-v1} that $J_2(t)=O(t^\vartheta)$. Using \eqref{F-diff} with $l=k$ ({$\CE_\ik=\bt$, $\CD_\ik=\bt$}) in $J_3$ and $J_5$ gives $J_{3,5}(t)=O(t^\vartheta\log t)$.

Consider now $J_4$. By \eqref{F-diff} with $l=L_\bt+1$ ({$\CE_\ik=\bt$, $\CD_\ik=\bt+1$}):
\be\label{hilb-st2-v1}\begin{split}
J_4(t)&=\int_{t-1}^{t+1} \frac{\Delta F^{(k)}(q)-\Delta F^{(k)}(t)}{q-t}dq\\
&=\int_{t-1}^{t+1} \frac{[F_s^{(k)}(q)-F_s^{(k)}(t)]-[F_i^{(k)}(q)-F_i^{(k)}(t)]}{q-t}dq
=O(t^\vartheta).
\end{split}
\ee

Combining all the results we finish the proof.

%

\section{Proof of Lemma~\ref{G-al-est}}\label{sec:prflem}

Define
\be\label{big-F}
F_{\al}(t):=\sum_j \ik_\e((H(\al)+t)-\e j) g(\al,\e j).
\ee
We begin by showing that there exists $c>0$ so that:
\be\label{F-bounds}
F_{\al}^{(l)}(t)=\begin{cases} 0,& |t|>c,\\
O(\e^{{{s'}}-1-l}),& |t|\le c\e,\\
O(|t|^{{{s'}}-1-l}),& c\e \le |t| \le c,
\end{cases}\quad {0\le l\le \begin{cases}\bt_0+1,& \text{if } \bt_0 \in\mathbb N,\\ 
\lceil\bt_0\rceil,& \text{if }\bt_0 \not\in\mathbb N.
\end{cases}}.
\ee
In \eqref{F-bounds}, $O(\e^{{{s'}}-1-l})$ is uniform with respect to $t$ provided that $|t|\le c\e$, and $O(t^{{{s'}}-1-l})$ is uniform with respect to $\e$ provided that $c\e \le |t|\le c$. Additionally, each of these big-$O$ terms is uniform with respect to $\al\in\Omega$. For this property to hold the {requirement \eqref{gfn-exp-unif} is essential}. The essence of the estimate \eqref{F-bounds} is to control the behavior of $F_{\al}^{(l)}(t)$ for small $t$.

Let us prove \eqref{F-bounds} for a given $l$. The top case in \eqref{F-bounds} follows because $\ik$ and $g$ are compactly supported. The middle case follows from the top line in \eqref{Oa_st1-g} and the fact that the number of terms in the sum in \eqref{big-F} is uniformly bounded for all $\al\in\Omega$ and all $\e>0$ sufficiently small. We also use that $\ik^{(l)}\in L^\infty$. To prove the bottom case, assume that $c>0$ is sufficiently large and $\ik_\e(t)\equiv 0$ when  $|t|\ge c\e$. The rest of the argument follows by representing $g(\al,\e j)$ as the Taylor polynomial of degree $l-1$ centered at $t$ plus the remainder, differentiating $l$ times, and then using \eqref{Oa_st1-g}. The degree of the Taylor polynomial is $l-1$ instead of $l$ as in Appendix~\ref{sec:psi-asympt}, because no cancellation is needed now. In other words, in \eqref{F-bounds} the precise knowledge of the leading order term of the asymptotics of $F_\al^{({l})}(t)$, $t\to0$, is not required, we only need its order of magnitude. Therefore, using \eqref{F-bounds} for some $l$ requires {$\CE_\ik=l-1$ and $\CD_\ik=l$}.

The rest of the proof of the lemma is largely very similar to \eqref{B-loc-v1}--\eqref{hilb-st2-v1}. The difference between the proofs is due to the fact that $s'$ can be large, and $s'-1-\bt'$ is no longer necessarily negative. In particular, the integrals over infinite intervals may diverge, and we need to use that $F_\al$ is compactly supported. Also, we need to use that all our estimates are uniform with respect to $\al\in\Omega$.

In what follows the standing assumption is $\al\in\Omega_b$, and we introduce the notation $p=p(\al)=\al\cdot \xe-H(\al)$. Since $\check H''(\dir_0)$ is negative definite, and $\check x$ is confined to a bounded set, we have $p(\al)\ge c(A)\e$, $\al\in\Omega_b$, and $c(A)\to\infty$ as $A\to\infty$. Denote $\vartheta:=s'-1-\bt'$.

\noindent
{\bf Case I. ${\CB}$ is local.} Here ${{\bt'}}\in 0\cup \mathbb N$. By the bottom case in \eqref{F-bounds} with $l=\bt'$ ({$\CE_\ik=\bt'-1$, $\CD_\ik=\bt'$}),
\be\label{B-loc}\begin{split}
G(\al)=\sum_j({\CBa} \ik_\e)(\al\cdot \xe-\e j) g(\al,\e j)=O(1)F_\al^{({\bt'})}(p(\al))=O(p(\al)^{\vartheta}),
\end{split}
\ee
where $O(1)$ is a $\coi(\Omega)$ function of $\al$. Here we assume that $A>0$ in the definitions of $\Omega_a$, $\Omega_b$ (cf. \eqref{two-sets-rn}) is sufficiently large, so that $p(\al)\ge c\e$ for any $\al\in\Omega_b$ and $\check x$, and the bottom case in \eqref{F-bounds} indeed applies. Similar assumptions are made in Cases II and III below. 


\noindent{\bf Case II. ${\CB}$ is not local, $\bt'\not\in\mathbb Z$.}
Set $k:=\lceil{\bt'}\rceil$, $\nu:=\{{{\bt'}}\}$, $0<\nu<1$. Then $G(\al)$ is a linear combination of the following terms:
\be\label{aux_terms}
J_\pm(p):=\int F_{\al}^{(k)}(t)(t-p)_\pm^{-\nu} dt,\ p=p(\al).
\ee
The coefficients of the linear combination are $\coi(\Omega)$ functions of $\al$. The dependence of $J_\pm$ on $\al$ is omitted for simplicity. We begin by estimating $J_+(p)$ (with $c$ the same as in \eqref{F-bounds}):
\be\label{jp-est}\begin{split}
J_+(p)=&\int_p^{c} F_{\al}^{(k)}(t)(t-p)^{-\nu} dt=\int_p^{c} O(t^{{s'}-1-k})(t-p)^{-\nu} dt
=O(\Psi(p)),
\end{split}
\ee
where $\Psi$ is defined in \eqref{gr-fn}. 
Similarly to \eqref{B-loc}, in \eqref{jp-est} we assumed that $A>0$ is sufficiently large, so $p=p(\al)>c\e$, and the bottom case in \eqref{F-bounds} with $l=k=\lceil{\bt'}\rceil$ applies  ({$\CE_\ik=\lceil{\bt'}\rceil-1$, $\CD_\ik=\lceil{\bt'}\rceil$}).

The term $J_-(p)$ is estimated by splitting it into two expressions:
\be\label{jm-est}\begin{split}
&J_-^{(1)}(p):=\int_{-c}^{p/2} F_{\al}^{(k)}(t)(p-t)^{-\nu} dt,\
J_-^{(2)}(p):=\int_{p/2}^{p} F_{\al}^{(k)}(t) (p-t)^{-\nu} dt.
\end{split}
\ee
Integrating by parts, using that $F_\al(t)\equiv0$, $t\le -c$, and appealing to the bottom case in \eqref{F-bounds} with $0\le l\le k-1$ ({$\CE_\ik=\lceil{\bt'}\rceil-2$, $\CD_\ik=\lceil{\bt'}\rceil-1$}) gives
\be\label{jm-est-1}\begin{split}
|J_-^{(1)}(p)|&\le O(1)\int_{-c}^{p/2}|F_{\al}(t)|(p-t)^{-(\bt'+1)} dt
+O(p^\vartheta)\\
&\le O(1)\left(\int_{-c}^{-c\e}+\int_{-c\e}^{c\e}+\int_{c\e}^{p/2}\right)\frac{\left| F_{\al}(t) \right|}{(p-t)^{{\bt'}+1}} dt+O(p^\vartheta)\\
&=O(1)\left[\int_{-c}^{p/2} \frac{|t|^{{s'}-1}}{(p-t)^{\bt'+1}}dt+
\frac{\e^{s'}}{p^{\bt'+1}}\right]+O(p^\vartheta).
\end{split}
\ee
Considering the same three cases as in \eqref{gr-fn} and using that $\e=O(p(\al))$, it is easy to see that $J_-^{(1)}(p)=O(\Psi(p))$. 

To estimate $J_-^{(2)}(p)$, assume as before that $A>0$ is sufficiently large, $0.5p(\al)>c\e$, and the bottom case in \eqref{F-bounds} with $l=k$ applies ({$\CE_\ik=\lceil{\bt'}\rceil-1$, $\CD_\ik=\lceil{\bt'}\rceil$}). Then \eqref{jm-est} gives $J_-^{(2)}(p) = O(p^\vartheta)$. Combinining with the estimate for $J_-^{(1)}(p)$ this yields $J_-(p)=O(\Psi(p))$. Therefore,
\be\label{G-est-II}
G(\al) = O(\Psi(p(\al))).
\ee

\noindent{\bf Case III. ${\CB}$ is not local, $\bt'\in 0\cup\mathbb N$.}
In this case we look at only one expression
\be\label{hilb}
J(p):=\int\frac{F_{\al}^{(k)}(t)}{t-p}dt,\ {\bt'}=k.
\ee
Then
\be\label{hilb-st2}
J_1(p):=\int_{0.5p}^{1.5p} \frac{F_{\al}^{(k)}(t)-F_{\al}^{(k)}(p)}{t-p}dt.
\ee
By the bottom line in \eqref{F-bounds} with $l=k+1$ ({$\CE_\ik=\bt'$, $\CD_\ik=\bt'+1$}), upon assuming $0.5p(\al)\ge c\e$,
\be\label{hilb-bnd}
J_1(p)=O(p^{{s'}-1-(k+1)})O(p)=O(p^\vartheta).
\ee
The other two terms:
\be\label{hilb-2-terms}
J_2(p)=\int_{1.5p}^{c} F_{\al}^{(k)}(t)\frac{1}{t-p}dt,\
J_3(p)=\int_{-c}^{0.5p} F_{\al}^{(k)}(t)\frac{1}{t-p}dt,
\ee
are estimated similarly to \eqref{jp-est} and \eqref{jm-est-1}, respectively ({$\CE_\ik=\bt'-1$, $\CD_\ik=\bt'$}):
\be\label{hilb-2-ests}\begin{split}
&J_2(p)= \int_{1.5p}^{c} O\left(t^{s'-1-k}\right)\frac{1}{t-p} dt=O(\Psi(p)),\\
&|J_3(p)|\le O(1) \int_{-c}^{0.5p} |F_{\al}(t)|\frac{1}{(p-t)^{k+1}}dt+O(p^\vartheta)
=O(\Psi(p)).
\end{split}
\ee
Combining \eqref{hilb-bnd} and \eqref{hilb-2-ests} yields
\be\label{hilb-bnd-all}
G(\al)=O(1)J(p(\al))=O(\Psi(p(\al))).
\ee


\section{Proof of Lemma~\ref{lemma:lot} in the case $\kappa_2=0$.} \label{sec:prf-kappa2-zero}

The goal is to show that lower order terms contribute a constant to the DTB at $x_0$, i.e.
\be\label{finally}
\lim_{\e\to0}\left[({\CB_\e} g)(\xe)-({\CB_\e} g)(x_0)\right]=0,
\ee
so the DTB is independent of $\check x$ confined to bounded sets. 
From \eqref{Oa_st2-int},
\be\label{Oa_st2-v2}
\lim_{\e\to0}\sum_{\al_{\vec k}\in\Omega_a} G_{\xc}(\al_{\vec k})|\Delta\al_{\vec k}| = 0.
\ee
In what follows we introduce notations like these
\be\label{delJ}
\Delta F_\al(p):=F_\al(p+h\e)-F_\al(p),\ \Delta J_+(p):=J_+(p+h\e)-J_+(p),\dots,
\ee
where $h=\al\cdot\check x$. From \eqref{F-bounds},
\be\label{delF-bounds}
\Delta F_{\al}^{(l)}(t)=\begin{cases} 0,& |t|>c,\\
O(\e^{s'-1-l}),& |t|\le c\e,\\
O(\e|t|^{s'-2-l}),& c\e \le |t| \le c,
\end{cases}\quad {0\le l\le \begin{cases}\bt_0,& \text{if } \bt_0 \in\mathbb N,\\ 
\lceil\bt_0\rceil-1,& \text{if }\bt_0 \not\in\mathbb N.
\end{cases}}
\ee
The estimate of $\Delta F_{\al}^{(l)}(t)$ in \eqref{delF-bounds} is based on the estimate of $F_{\al}^{(l+1)}(t)$ in \eqref{F-bounds}, and the latter requires {$\CE_\ik=l$ and $\CD_\ik=l+1$.}

Analogously to \eqref{gr-fn}, define
\be\label{gr-fn-v2}
\Psi_2(p):=\begin{cases} p^\vartheta,&\vartheta<0,\\ \ln(1/p),& \vartheta=0,\\ 1,&\vartheta>0,
\end{cases}\quad p>0,\ \vartheta:=s'-2-\bt'.
\ee
To estimate the contribution of $\al_{\vec k}\in\Omega_b$, we replace $F_\al(t)$ with $\Delta F_\al(t)$ in Cases I--III in Appendix~\ref{sec:prflem}. Case I is the easiest. By the bottom line in \eqref{delF-bounds} with $l=\bt'$ ({$\CE_\ik=\bt'$, $\CD_\ik=\bt'+1$}), the analogue of \eqref{B-loc} becomes
\be\label{B-loc-v2}\begin{split}
G_{\xc}(\al)-G_{0}(\al)&=O(1)\Delta F_\al^{({\bt'})}(p)=O(\e p^\vartheta),\ p=p(\al).
\end{split}
\ee

\noindent In Case II, we need to estimate $\Delta J_{\pm}(p)$ (cf. \eqref{aux_terms}). As usual, set $k:=\lceil{\bt'}\rceil$, $\nu:=\{{{\bt'}}\}$, $0<\nu<1$. Then
\be\label{jp-v2}\begin{split}
\Delta J_\pm(p)&=\int_{-c}^{c} F_{\al}^{(k)}(t)(t-(p+h\e))_\pm^{-\nu}dt-\int_{-c}^{c} F_{\al}^{(k)}(t)(t-p)_\pm^{-\nu}dt\\
&=\int_{-c}^{c} \Delta F_{\al}^{(k)}(t)(t-p)_\pm^{-\nu}dt,
\end{split}
\ee
where we assumed that $c>0$ is sufficiently large. First, consider $\Delta J_+(p)$:
\be\label{jp-split-v2}\begin{split}
\Delta J_+(p)&=\left(\int_{p}^{2p}+\int_{2p}^{c}\right) F_{\al}^{(k)}(t)\left[(t-(p+\e))_+^{-\nu}-(t-p)^{-\nu}\right]dt\\
&=:\Delta J_+^{(1)}(p)+\Delta J_+^{(2)}(p).
\end{split}
\ee
After simple transformations,
\be\label{jp1-split-v2}\begin{split}
\Delta J_+^{(1)}(p)&=\int^{2p}_{p+\e}F_\al^{(k)}(t)\left[(t-(p+\e))^{-\nu}-(t-p)^{-\nu}\right]dt
-\int_{p}^{p+\e} \frac{F_{\al}^{(k)}(t)}{(t-p)^\nu}dt\\
&=:\Delta J_+^{(11)}(p)-\Delta J_+^{(12)}(p).
\end{split}
\ee
Using \eqref{F-bounds} with $l=k$ ({$\CE_\ik=\lceil{\bt'}\rceil-1$, $\CD_\ik=\lceil{\bt'}\rceil$}) gives
\be\label{jp12}\begin{split}
|\Delta J_+^{(11)}(p)|=&O(1)\int^{2p}_{p+\e}t^{s'-1-k}\left[(t-(p+\e))^{-\nu}-(t-p)^{-\nu}\right]dt\\
=&O(p^{s'-1-k})\int^{2p}_{p+\e}\left[(t-(p+\e))^{-\nu}-(t-p)^{-\nu}\right]dt
=O(\e^{1-\nu} p^{s'-1-k}),
\end{split}
\ee
and
\be\label{jp11-jp13}\begin{split}
\Delta J_+^{(12)}(p)=O(\e^{1-\nu} p^{s'-1-k}).
\end{split}
\ee
Combining \eqref{jp1-split-v2}--\eqref{jp11-jp13} gives
\be\label{jp-estim-v2}
\Delta J_+^{(1)}(p)=O(\e^{1-\nu} p^{s'-1-k}).
\ee
Estimation of $\Delta J_+^{(2)}(p)$ also is based on \eqref{F-bounds} with $l=k$ ({$\CE_\ik=\lceil{\bt'}\rceil-1$, $\CD_\ik=\lceil{\bt'}\rceil$}):
\be\label{djp2}\begin{split}
|\Delta J_+^{(2)}(p)|=&O(1)\int_{2p}^c t^{s'-1-k}\left[(t-(p+\e))^{-\nu}-(t-p)^{-\nu}\right]dt\\
=&O(\e)\int_{2p}^{c}t^{s'-1-k}t^{-(1+\nu)}dt=O(\e \Psi_2(p)).
\end{split}
\ee
Therefore, from \eqref{jp-estim-v2} and \eqref{djp2}
\be\label{djp-estim-v2}
\Delta J_+(p)=O(\e^{1-\nu} p^{s'-1-k})+O(\e \Psi_2(p)).
\ee


Next, we investigate $\Delta J_-(p)$:
\be\label{jm-split-v2}\begin{split}
\Delta J_-(p)&=\left(\int_{-c}^{p/2}+\int_{p/2}^{p}\right) \Delta F_{\al}^{(k)}(t)(p-t)^{-\nu}dt=:\Delta J_-^{(1)}(p)+\Delta J_-^{(2)}(p).
\end{split}
\ee
Estimation of $\Delta J_-^{(1)}(p)$ is analogous to \eqref{jm-est-1}:
\be\label{deljm-est-1}\begin{split}
|\Delta J_-^{(1)}(p)|&\le O(1)\int_{-c}^{p/2}|\Delta F_{\al}(t)|(p-t)^{-(\bt'+1)} dt
+{O(\e p^\vartheta)}\\
&\le O(1)\left(\int_{-c}^{-c\e}+\int_{-c\e}^{c\e}+\int_{c\e}^{p/2}\right)\frac{\left|\Delta F_{\al}(t) \right|}{(p-t)^{{\bt'}+1}} dt+O(\e p^\vartheta)\\
&=O(1)\left[\e\int_{-c}^{p/2} \frac{|t|^{s'-2}}{(p-t)^{\bt'+1}}dt+
\e\frac{\e^{s'-1}}{p^{\bt'+1}}\right]+O(\e p^\vartheta)=O(\e \Psi_2(p)).
\end{split}
\ee
By assumption, $s'\ge s\ge (n+1)/2$. Therefore, $s'-2\ge -1/2$, and the first integral on the last line is absolutely convergent at $t=0$. 
When integrating by parts, \eqref{delF-bounds} is used for $l=k-1,k-2,\dots,0$. Hence {$\CE_\ik=\lceil{\bt'}\rceil-1$, $\CD_\ik=\lceil{\bt'}\rceil$}. 

Estimation of $\Delta J_-^{(2)}(p)$ is analogous to that of $\Delta J_+^{(1)}(p)$. Hence $\Delta J_-(p)$ also satisfies \eqref{jp-estim-v2}, and combining with \eqref{deljm-est-1} and \eqref{djp-estim-v2} gives
\be\label{CaseII-v2}
G_{\xc}(\al)-G_{0}(\al)=O(\e^{1-\nu} p^{s'-1-k})+O(\e \Psi_2(p)),\ p=p(\al).
\ee

In Case III, $k=\bt'$, we have
\be\label{cIII-st1}
\Delta J(p)=\int \Delta F_{\al}^{(k)}(t)\frac1{t-p}dt,
\ee
which is analogous to \eqref{hilb}. Similarly to \eqref{hilb-st2},
\be\label{hilb-st2-v2}
\Delta J_1(p)=\int_{p-\e}^{p+\e} \frac{\left[F_{\al}^{(k)}(t+h\e)-F_{\al}^{(k)}(p+h\e)\right]-\left[F_{\al}^{(k)}(t)-F_{\al}^{(k)}(p)\right]}{t-p}dt.
\ee
Estimating each of the two terms in \eqref{hilb-st2-v2} separately using the bottom case in \eqref{delF-bounds} with $l=k$ ({$\CE_\ik=\bt'$, $\CD_\ik=\bt'+1$}) and adding the two estimates yields
\be\label{hilb-st3-v2}
\Delta J_1(p)= O(\e p^\vartheta).
\ee
The two remaining terms, $\Delta J_2(p)$ and $\Delta J_3(p)$, are as follows
\be\label{J2J3-v2}
\Delta J_2(p)=\int_{-c}^{p-\e} \Delta F_{\al}^{(k)}(t)\frac1{t-p}dt,\
\Delta J_3(p)=\int_{p+\e}^c \Delta F_{\al}^{(k)}(t)\frac1{t-p}dt.
\ee
To estimate $\Delta J_2$, we write
\be\label{J2-parts}
\Delta J_{21}(p)=\int_{-c}^{p/2} \Delta F_{\al}^{(k)}(t)\frac1{t-p}dt,\
\Delta J_{22}(p)=\int_{p/2}^{p-\e} \Delta F_{\al}^{(k)}(t)\frac1{t-p}dt.
\ee
Integrating by parts in $\Delta J_{21}$ and using \eqref{delF-bounds} with $l=k-1,k-2,\dots,0$ ({$\CE_\ik=\bt'-1$, $\CD_\ik=\bt'$}) gives similarly to \eqref{deljm-est-1}
\be\label{J21-byparts}\begin{split}
\Delta J_{21}(p)&=O(1)\int_{-c}^{p/2} |\Delta F_{\al}(t)|\frac1{(p-t)^{k+1}}dt+ O(\e p^\vartheta)\\
&=O(1)\left(\e\int_{-c}^{p/2} \frac{|t|^{s'-2}}{(p-t)^{k+1}}dt
+\int_{-c\e}^{c\e} \frac{\e^{s'-1}}{(p-t)^{k+1}}dt\right)+O(\e p^\vartheta)\\
&=O(\e)\left(\Psi_2(p)
+\e^{s'-1}p^{-(k+1)}\right)+O(\e p^\vartheta)=O(\e \Psi_2(p)).
\end{split}
\ee
By \eqref{delF-bounds} with $l=k$ ({$\CE_\ik=\bt'$, $\CD_\ik=\bt'+1$})
\be\label{J3-est}\begin{split}
\Delta J_{22}(p)&=O(\e)\int_{p/2}^{p-\e}\frac{t^{s'-2-k}}{p-t}dt=\e\ln(p/\e)O(p^\vartheta),\\
\Delta J_3(p)&=O(\e)\int_{p+\e}^c \frac{t^{s'-2-k}}{t-p}dt=O(\e)(\ln(p/\e)p^\vartheta+\Psi_2(p)).
\end{split}
\ee
Combining \eqref{hilb-st3-v2} and \eqref{J21-byparts}, \eqref{J3-est} gives
\be\label{CaseIII-v2}\begin{split}
G_{\xc}(\al)-G_{0}(\al)&=\e \left[\ln(p/\e)O(p^\vartheta)+O(\Psi_2(p))\right],\  p=p(\al).
\end{split}
\ee

Now we prove \eqref{finally}. Suppose $\vartheta<0$. Comparing \eqref{B-loc-v2}, \eqref{CaseII-v2}, and \eqref{CaseIII-v2}, it is clear that we have to consider only the last two cases. 
In Case II, the analogue of \eqref{B-loc_st2} becomes:
\be\label{B-loc_II-excep}\begin{split}
&\sum_{\al_{\vec k}\in\Omega_b}(G_{\xc}(\al_{\vec k})-G_{0}(\al_{\vec k}))|\Delta\al_{\vec k}|\\
&=O(\e^{1-\nu})\int_{\e^{1/2}}^{1} r^{2(s'-1-k)+(n-2)}dr+O(\e)\int_{\e^{1/2}}^{1} r^{2\vartheta+(n-2)}dr\\
&=O\left(\e^{(s'-\bt')-(s_0-\bt_0)}\right)\to0,\ \e\to0.
\end{split}
\ee
In Case III, the computation is
\be\label{B-loc_st2-excep}\begin{split}
&\sum_{\al_{\vec k}\in\Omega_b}(G_{\xc}(\al_{\vec k})-G_{0}(\al_{\vec k}))|\Delta\al_{\vec k}|\\
&=O(\e)\int_{\e^{1/2}}^{1} \ln(r^2/\e)r^{2\vartheta+(n-2)}dr
=O(\e)\int_{\e}^{1} \ln(r/\e)r^{\vartheta+(n-3)/2}dr\\
&=O(\e)+O\left(\ln(1/\e)\e^{(s'-\bt')-(s_0-\bt_0)}\right)\to0,\ \e\to0.
\end{split}
\ee
Combining with \eqref{Oa_st2-v2} finishes the proof. The other two cases $\vartheta=0$ and $\vartheta>0$ can be considered analogously.

\section{Useful formulas}\label{sec:usef}
For convenience, we state here the key formulas used in the paper extensively (see \cite{GS}):
\be\label{main-eqs}
\begin{split}
&\mathcal F(x_\pm^{a})=e^{\pm i({a}+1)\pi/2}\Gamma({a}+1)(\la\pm i0)^{-({a}+1)},\ 
{a}\not=-1,-2,\dots,\\
&\mathcal F((x\pm i0)^{a})=\frac{2\pi e^{\pm i{a}\pi/2}}{\Gamma(-{a})}\la_{\mp}^{-({a}+1)},\ 
{a}\not=0,1,2,\dots,\\
&(x\pm i0)^{a}=x_+^{a}+e^{\pm i{a}\pi}x_-^{a},\ {a}\not=-1,-2,\dots,\\
&(x\pm i0)^{-n}=x^{-n}\mp \frac{i\pi(-1)^{n-1}}{(n-1)!}\de^{(n-1)}(x),\ n=1,2,\dots.
\end{split}
\ee
Also, the $n-1$ dimensional area of the sphere $S^{n-1}$ in $\br^n$ is $|S^{n-1}|=2\pi^{n/2}/\Gamma(n/2)$. Another useful identity is $\Gamma(s)\Gamma(1-s)=\pi/\sin(\pi s)$.

\end{document}